\newtheorem{proposition}{Proposition}
\newtheorem{lemma}{Lemma}
\newtheorem{definition}{Definition}
\newtheorem{remark}{Remark}
\numberwithin{equation}{section}
\newtheorem{assump}{Assumption}[section]
\begin{document}
\def\Box{ \framebox[5.5pt]}

\title{\centering \large \bf Decomposition of the Wave Manifold into Lax Admissible Regions and its 
Application to the  Solution of Riemann Problems}

\author{Cesar S. Eschenazi
  \thanks{email: \texttt{cseschenazi\emph{{@}}gmail.com}}}
\affil{Universidade Federal de Minas Gerais -- UFMG\\ Belo Horizonte, MG, CEP: 31270-901}
\author{Wanderson J. Lambert
  \thanks{email: \texttt{wanderson.lambert\emph{{@}}unifal-mg.edu.br}}}
\affil{Universidade Federal de Alfenas -- UNIFAL\\ Alfenas, MG, CEP: 37130-001}
\author{Marlon M. L\'{o}pez-Flores%
  \thanks{email: \texttt{mmlf\emph{{@}}impa.br}}}
\affil{Instituto Nacional de Matem\'{a}tica Pura e Aplicada -- IMPA\\ Rio de Janeiro, RJ, Brazil, CEP: 22460-320}
\author{Dan Marchesin%
  \thanks{email: \texttt{marchesi\emph{{@}}impa.br}}}
\affil{Instituto Nacional de Matem\'{a}tica Pura e Aplicada -- IMPA\\ Rio de Janeiro, RJ, Brazil, CEP: 22460-320}
\author{Carlos F.B. Palmeira
  \thanks{email: \texttt{fredpalm\emph{{@}}gmail.com} - Corresponding author.}}
\affil{Pontifícia Universidade Católica do Rio de Janeiro -- PUC-Rio\\
Rio de Janeiro, RJ - Brasil, CEP: 22451-900}
\date{}
\maketitle

\begin{abstract}

We utilize a three-dimensional manifold to solve Riemann Problems that
arise from a system of two conservation laws with quadratic flux functions. Points in this manifold represent potential shock waves, hence its name wave manifold. This manifold is subdivided into regions according to the Lax
admissibility inequalities for shocks. Finally, we present solutions for the Riemann 
Problems for various cases and exhibit continuity relative to $L$ and $R$ data, despite the fact that the system is not strictly hyperbolic. The usage of this manifold regularizes the solutions despite the presence of an elliptic region.
 

\end{abstract}
\tableofcontents

\section{Introduction\label{sec:introduction}}
We consider the system of two partial differential equations 
\begin{equation}
\left.
W_{_{t}}+F(W)_{_{x}}=0,
\label{eq:cons-law}
\right.
\end{equation}
with initial conditions
\begin{equation}
W(x,t=0)=\left\{
\begin{array}{ll}
W_{_{L}}&\hbox{\rm{if }}x < 0,\\
W_{_{R}} &\hbox{\rm{if }}x > 0.
\end{array}
\right.
\label{eq:initial-data}
\end{equation}

\noindent We will\ \ take\ \ $W=(u,v)\in \mathbb{R}^{^{2}}$\ \   and\ \  $F$\ \ a map\ \ $F:\mathbb{R}^{^{2}}\longrightarrow \mathbb{R}^{^{2}}$, the two components of which are called \emph{flux functions}. Equation \eqref{eq:cons-law} appears in fluid dynamics and together with initial conditions \eqref{eq:initial-data} is called a Riemann problem. 

We are mainly interested in the so called shock solutions, defined by
 \begin{equation*}
W=\left\{
\begin{array}{ll}
W &\hbox{\rm{for }}x < st,\\
W' &\hbox{\rm{for }}x > st,
\end{array}
\right.
\end{equation*}
where\ \ $s$\ \ is the  shock propagation speed,\ \ $W=(u,v)$\ \ and\ \ $W'=(u',v')$\ \ are the states to be connected by the shock.\\

In order to have physical meaning as a shock, the speed and the states must satisfy the so called \emph{ Rankine-Hugoniot} condition \cite{Smoller94}.
\begin{equation}\label{RH-condition}
F(W)-F(W')=s(W-W'),
\end{equation}
for some\ \ $s$. This leads to the definition of Hugoniot curve associated to a given state\ \ $W=(u_{_{0}},v_{_{0}})$,\ \ in Section  \ref{sec:wmhc}.

Furthermore, not all arcs of Hugoniot curves are useful to construct solutions. Shock curve arcs must satisfy some extra conditions called admissibility conditions. In \cite{AEMP10}, Liu's admissibility entropy criterion was introduced within the wave manifold context. There it was shown that under certain extra assumptions, which will be considered in the current paper, Liu's entropy criterion can be replaced by  the Lax entropy inequality conditions. These inequalities relate\ \ $s$\ \ and the eigenvalues of\ \ $DF(u,v)$. Hugoniot arc curves satisfying these inequalities are called \emph{ shock curve arcs} or \emph{ admissible arcs}.

We adopt here the topological point of view, as described in \cite{Isaacson92}. We consider the space\ \ $\mathbb{R}^{^{5}}$\ \ of coordinates\ \ $(u,v,u',v',s)$\ \ and in it the three-dimensional manifold defined by\ \ $F(u,v)-F(u',v')-s(u-u',v-v')=0$. In this manifold, we consider the curves defined by\ \ $(u,v)$\ \ constant, called Hugoniot curves, their projections onto the state space are the classical Rankine-Hugoniot curves, see \cite{AEMP10}. This manifold is called \emph{ wave manifold}.   

In a series of papers (see \cite{AEMP10, Eschenazi02, Marchesin94b, Palmeira88}), this manifold and its Hugoniot curves have been studied for the case where\ \ $F$\ \ is a polynomial of degree two. The wave manifold has been characterized, relevant surfaces ($characteristic$,\ \ $sonic$\ \ and\ \ $sonic'$) have been defined, the intersection of Hugoniot curves with these surfaces has been studied, and the Lax inequalities have been interpreted in this context. Here, also considering\ \ $F$\ \ as a polynomial of degree two, we decompose the\ \ $characteristic$\ \ and\ \ $sonic'$\ \ surfaces in their fast and slow components. And also decompose the wave manifold in regions which we call admissible or non-admissible regions. 

As stated at the beginning of Section \ref{sec:wmhc}, we will restrict ourselves to the symmetric Case IV of the
Sheaffer-Shearer classification, see \cite{Schaeffer87}.
A study of cases I, II, III and IV (non symmetric case) can be found in \cite{Lopez20}.

We will also consider rarefaction and composite solutions, to be defined in Section \ref{sec:rare-comp} and used in Sections \ref{subsec:WaveCurve} and  \ref{sec:RiemannSol}.

This paper is organized as follows: In Section \ref{sec:wmhc}, we review some basic facts and definitions, introduce new variables and describe the\ \ $characteristic$,\ \ $sonic$\ \ and\ \ $sonic'$\ \ surfaces, \cite{Isaacson92}. In Section \ref{sec:cscf}, we characterize the slow and fast components of the characteristic surface associated with the eigenvalues of\ \ $DF$. We also characterize the \emph{coincidence curve}, which is the boundary of these two components. In Section \ref{sec:waveman}, we describe how the\ \ $characteristic$,\ \ $sonic$\ \ and\ \ $sonic'$\ \ surfaces divide the wave manifold into twelve regions and characterize the surface formed by the Hugoniot curves through points of the \emph{ coincidence curve}.  This surface will be called { \emph{ saturated of the coincidence curve by Hugoniot curves}} and denoted by $SSC$. It is tangent to the\ \ $characteristic$\ \ and\ \ $sonic'$\ \ surfaces. In Section \ref{sec:sonlifs}, we characterize the slow and fast components of the\ \ $sonic'$\ \ surface, associated with the slow shock speed and the fast shock speed. We show that the boundaries of these two components are a straight line and the  \emph{hysteresis$'$ curve}, which is the curve where Hugoniot curves are tangent to the $sonic'$ surface. In Section \ref{sec:lax}, we identify some of regions in the wave manifold where Lax's inequalities are satisfied, indicating in which regions there are local shock curve arcs and in which of these regions  there are nonlocal shock curve arcs.
 
In Section \ref{sec:rare-comp}, we present the rarefaction and composite curves, derived from the rarefaction solutions mentioned before, and decompose the state space ($(u,v)$-plane) in elliptic and hyperbolic regions. In Section \ref{subsec:WaveCurve},  we introduce the wave curve in the wave manifold, such curves consist of arcs of admissible Hugoniot curves, rarefaction curves and composite curves of the same family. In Section \ref{sec:RiemannSol}, we use all the elements previously defined to solve Riemann problems.

It is known from \cite{Eschenazi02} that a Hugoniot curve through points of the secondary bifurcation has two components, a straight line and a curve. The lines generate a plane and the curves a surface, which we will call {\emph{ saturated of secondary bifurcation by Hugoniot curves}, denoted by $SSB$. In Appendix \ref{app:L2}, we characterize the regions of the wave manifold where condition \textbf{L2}, introduced in Section \ref{sec:lax}, is satisfied. In Appendix \ref{app:cho}, we justify the choices of the flux function, parameters and coordinates used in this paper.

\section{The Wave Manifold and Hugoniot Curves\label{sec:wmhc}}


In this section, we review some  basic facts and definitions and introduce new variables. We will define the $characteristic$, $sonic$ and $sonic'$ surfaces, since they will appear as boundaries of the admissible regions.

We refer the reader to Section 2 of \cite{Eschenazi02} for basic definitions. For the sake of completeness, we briefly present some necessary concepts needed here.

We consider the equation (\ref{eq:cons-law}) with\ \ $F$\ \ given by\ \ $F=(f,g)$,\ \ where
\begin{equation}
\left\{
\begin{array}{l}
f(u,v)=v^{^{2}}/2+(b_{_{1}}+1)u^{^{2}}/2+a_{_{1}}u+a_{_{2}}v,\\
\\
g(u,v)=uv+a_{_{3}}u+a_{_{4}}v,
\end{array}
\right.\label{fgeq}
\end{equation}
for\ \ $b_{_{1}}>1$. This is the symmetric Case IV  in the Schaeffer and Shearer classification, see \cite{Schaeffer87}.

Given a point\ \ $W=(u,v)$,\ \ in the (usually called) ``state space'', \emph{ the Hugoniot curve through this point is defined as the set of points\ \ $W'=(u',v')$,\ \ such that there exists\ \ $s$\ \ such that satisfies\ \ $F(W)-F(W')=s(W-W')$}. We clearly see that this curve passes through\ \ $W$.

To study Hugoniot curves, we consider $\mathbb{R}^{^{5}}=\{(u,v,u',v',s)\}$\ \ and in it the
three-dimensional manifold defined by\ \ $F(W)-F(W')=s(W-W')$. Since this manifold is singular along the diagonal\ \ $W=W'$,\ \ along this diagonal, we perform a blow up, which, in this simple case, is obtained using the coordinate transformations given by\ \ $U=(u+u')/2$,\ \ $V=(v+v')/2$,\ \ $X=u-u'$,\ \ $Y=v-v'$,\ \ and \ \ $Z=Y/X$\ \ and factoring\ \ $X^{^{2}}$. We also set\ \  $c=a_{_{3}}-a_{_{2}}>0$. Using these new coordinates, we get 
$$(1-Z^{^{2}})\left(V+a_{_{2}}\right)-Z\left(b_{_{1}}U+a_{_{1}}-a_{_{4}}\right)+c=0\ \ \ \ \ \ \ \text{and}\ \ \ \ \ \ \ Y=ZX.$$
The two above equations define a regular  three-dimensional manifold, which will be referred  as the \emph{wave manifold} and will be denoted by $\mathcal{W}$.

Since\ \ $Z$\ \ is a direction, we may think of the wave manifold\ \ $\mathcal{W}$,\ \ as contained in\ \ $\mathbb R^{^{4}} \times \mathbb RP^{^{1}}$,\ \ or, which is the same as,\ \ $\mathbb R^{^{4}}\times S^{^{1}}$. These coordinates are not valid at\ \ $Z=\infty$,\ \ but there are no special features at infinity, so, we can just use\ \ $X$,\ \ $U$,\ \ $V$,\ \ $Z$\ \ to study Hugoniot curves.

Defining\ \ $\widetilde{U}=b_{_{1}}U+a_{_{1}}-a_{_{4}}$\ \ and\ \ $\widetilde{V}=V+a_{_{2}}$ we obtain the equation $G_{_{1}}=(1-Z^{^{2}})\widetilde V-Z\widetilde{U}+c=0$,\ \ used in (\cite{Marchesin94b, Eschenazi02, Eschenazi13}).

Hugoniot curves in the wave manifold are defined by\ \ $u=\text{constant}$\ \ and\ \ $v=\text{constant}$,\ \ which, in these new coordinates, become 
\begin{equation}
\left \{
\begin{array}{l}
2\widetilde{U}+b_{_{1}}X=2b_{_{1}}u_{_{0}}+2(a_{_{1}}-a_{_{4}}),\\
2\widetilde{V}+ZX=2v_{_{0}}+2a_{_{2}}. 
\end{array}
\right.
\label{eq:KL}
\end{equation}

As shown in \cite{Marchesin94b}, Hugoniot curves are connected and foliate\ \ $\mathcal{W}$\ \ except along a straight line\ \ $B$,\ \ contained on the plane,\ \ $\Pi$,\ \ defined by\ \ $Z=0$. The straight line\ \ $B$\ \ is called \emph{ secondary bifurcation}. Hugoniot curves through points in\ \ $B$\ \ are formed by two arcs: a straight line\ \ $hugl$\ \ and a curve $hugc$. { These two arcs intersect at a point of \ \ $B$}. The lines \ \ $hugl$\ \ fill the plane $\Pi$.

\begin{definition}
 The set of Hugoniot curves through points of \ \ $hugc$\ \ is called  \emph{saturated of the secondary bifurcation by Hugoniot curves} and  denoted by $SSB$. 
\end{definition}

It is easy to see that $SSB$ is a two-dimensional sub-manifold of\ \ $\mathcal{W}$\ \ and that\ \ $SSB$\ \ and\ \ $\Pi$\ \ intersect transversely along\ \ $B$.

The same conclusions are valid for Hugoniot$'$ curves, defined by \ \ $u'=constant$\ \ and \ \  $v'=constant$. Hugoniot$'$ curves are connected and foliate \ \ $\mathcal{W}$\ \ except along a straight line \ \ $B'$,\ \ also contained on the plane  $\Pi$. Hugoniot$'$ curves through points in \ \ $B'$\ \ are formed by two arcs: a straight line \ \ $hugl'$\ \ and a curve  $hugc'$. { These two arcs intersect at a point of $B'$}. The lines \ \ $hugl'$\ \ also fill the plane $\Pi$. \ \ In a similar way the curves  $hugc'$ form a two-dimensional sub-manifold called \emph{ saturated of the secondary bifurcation by Hugoniot$'$ curves}, denoted by $SSB'$. The plane  $\Pi$ and intersect transversely along $B'$.

There are three important surfaces in the study of the wave manifold: Characteristic (denoted by \ \ $\mathcal{C}$), Sonic (denoted by\ \ $Son$) and Sonic$'$ (denoted by\ \ $Son'$). 



To define\ \ $Son$\ \ and\ \ $Son'$,\ \ one must look at the speed\ \ $s$\ \ as a real function in\ \ $\mathcal{W}$\ \ given by\break $[f(u,v)-f(u',v')]/(u-u')$\ \ or\ \ $[g(u,v)-g(u',v')]/(v-v')$. Using coordinates\ \ $X$,\ \ $\widetilde{U}$\ \ and\ \ $\widetilde{V}$,\ \ we have   
\begin{equation}
\left.  
s=\frac{\widetilde{U}}{b_{_{1}}}+ \frac{ \widetilde{V}}{Z},
\right.
\label{eq:sp1}
\end{equation}

To define\ \ $Son$,\ \ we restrict\ \ $s$\ \ to a Hugoniot curve, and look at its critical points. The set of these critical points for all Hugoniot curves is the \emph{ Sonic surface},\ \ $Son$. In the same way, we define the \emph{ $Sonic'$ surface},\ \ $Son'$,\ \ as the set of all critical points of\ \ $s$\ \ restricted to a Hugoniot$'$ curve. This was done in \cite{Marchesin94b}, obtaining equations
\begin{equation}\label{eq:eqson}
 2[Z^{^{2}}+b_{_{1}}+1]\widetilde{U}+2[Z^{^{3}}+(b_{_{1}}+3)Z]\widetilde{V}-[Z^{^{2}}-(b_{_{1}}+1)]X=0
\end{equation}
for $Son$ and 
\begin{equation}\label{eq:eqson'}
2[Z^{^{2}}+b_{_{1}}+1]\widetilde{U}+2[Z^{^{3}}+(b_{_{1}}+3)Z]\widetilde{V}+[Z^{^{2}}-(b_{_{1}}+1)]X=0 
\end{equation}
for\ \ $Son'$.\\

We list some important known facts about\ \ $\mathcal{C}$,\ \ $Son$\ \ and\ \ $Son'$,  for the quadratic flux functions considered here.

\noindent\textbf{Fact 1.-} $\mathcal{C}$\ \ is topologically a cylinder.\medskip

\noindent\textbf{Fact 2.-} Given a Hugoniot curve, either it intersects\ \ $\mathcal{C}$\ \ transversely in two points, or it is tangent or it does not intersect\ \ $\mathcal{C}$.\medskip

\noindent\textbf{Fact 3.-} The above mentioned set of tangency points form a simple closed curve. This curve does not bound a disk in\ \ $\mathcal{C}$\ \ and  divides it into two components, denoted by\ \ $\mathcal{C}_{_{s}}$\ \ and\ \ $\mathcal{C}_{_{f}}$. This simple closed curve is called \emph{coincidence curve in\ \ $\mathcal{C}$}, denoted by\ \ $\mathcal{E_C}$. \medskip

\noindent \textbf{Fact 4.-} Any Hugoniot curve through a point\ \ $\mathcal{U}$,\ \ $sh(\mathcal{U})$,\ \ which intersects\ \ $\mathcal{C}$\ \ transversely, does it at a point\ \ $\mathcal{U}_{_{s}}=sh(\mathcal{U})\cap \mathcal{C}_{_{s}}$\ \ and in another point\ \ $\mathcal{U}_{_{f}}=sh(\mathcal{U})\cap \mathcal{C}_{_{f}}$. The value of the speed\ \ $s$\ \  in\ \ $\mathcal{U}_{_{s}}$\ \ is smaller than the value of\ \ $s$\ \ in\ \ $\mathcal{U}_{_{f}}$. Proof of this fact will be presented in Section \ref{sec:cscf}, where we will characterize\ \ $\mathcal{C}_{_{s}}$\ \ and\ \ $\mathcal{C}_{_{f}}$.\medskip

\noindent \textbf{Fact 5.-} $Son$\ \ and\ \ $Son'$\ \ are topologically M\"{o}bius band and they intersect\ \ $\mathcal{C}$\ \ along a curve called \emph{ inflection locus}, denoted by\ \ $\mathcal{I}$,  which is diffeomorphic to\ \ $\mathbb{R}$, and contains the intersection point of\ \ $B$\ \ and\ \  $\mathcal{C}$. They also intersect along two straight lines, each of which intersects transversally\ \ $\mathcal{C}$\ \ at a point of the inflection locus.\medskip

\noindent \textbf{Fact 6.-} Generically, a Hugoniot curve through a point\ \ $\mathcal{U}$,\ \ $sh(\mathcal{U})$,\ \ intersects\ \ $Son'$\ \ in 0, 2 or 4 points. If\ \ $sh(\mathcal{U}) \cap Son'$\ \ has 2 points, then\ \ $s$\ \ has the same  value in these 2 points as the value of\ \ $s$\ \ either in\ \ $\mathcal{U}_{_{s}}$\ \ or in\ \ $\mathcal{U}_{_{f}}$. In the first case, the points are said to be in\ \ $Son'_{s}$\ \ and in the second case,\ \ in $Son'_{_{f}}$. If the intersection has 4 points, then\ \ $s$\ \ has the same value of\ \ $s$\ \ in\ \ $\mathcal{U}_{_{s}}$\ \ in two of them and the value of\ \ $s$\ \ in\ \ $\mathcal{U}_{_{f}}$\ \ in the other two. The first two are points in\ \ $Son'_{s}$\ \ and the other two in\ \ $Son'_{_{f}}$.

\subsection{New Coordinates \label{sec:Yzt-subsection}}

In this paper, we use coordinates\ \ $z=1/Z$,\ \ $t$\ \ and\ \ $Y$. The main advantage is that they are valid for\ \ $\mathcal{W}-\Pi$\ \  and\ \ $\Pi$\ \ becomes the plane\ \ $z=\infty$. To define\ \ $t$,\ \  we start by writing the equation of\ \ $\mathcal{W}$\ \ in term of the variables\ \ $\widetilde{U}$,\ \ $V_{_{1}}=V+a_{_{3}}=\widetilde{V}+c$\ \ and\ \ $z$. We get\ \ $G=0$,\ \  where $G=(z^{^{2}}-1)V_{_{1}}-z\widetilde{U}+c$. The parameter\ \ $t$\ \ is defined by
\begin{equation}\label{eq:tdef}
\widetilde{U}=\displaystyle \frac{2cz}{z^{^{2}}+1}+ct(z^{^{2}}-1)\ \ \ \ \ \ \ \text{and}\ \ \ \ \ \ \
 V_{_{1}}=\displaystyle \frac{c}{z^{^{2}}+1}+ctz.
\end{equation}

\noindent  For each fixed\ \ $z$,\ \ the parameter\ \ $t$\ \ measures as far as\ \ $\widetilde{U}$\ \ and\ \ $V_{_{1}}$\ \ are away from the { coincidence curve in $\mathcal{C}$} in the orthogonal direction to it.

In these new coordinates, the characteristic surface,\ \ $\mathcal{C}$,\ \  is the plane\ \ $Y=0$. The \emph{sonic}  and the $sonic'$ surfaces are given by the equations obtained from equations \eqref{eq:eqson} and \eqref{eq:eqson'} by transforming from variables\ \ $\widetilde{U}$,\ \ $\widetilde{V}$,\ \ $X$\ \ and\ \ $Z$\ \ to the new coordinates\ \ $z$,\ \ $t$\ \ and\ \ $Y$. We obtain\ \ $son=0$,\ \ where
\begin{equation}
son= -2c[(b_{_{1}}+1)z^{^{5}}+(b_{_{1}}+4)z^{^{3}}+3z]t-[(b_{_{1}}+1)z^{^{4}}+b_{_{1}}z^{^{2}}-1]Y-2c[(b_{_{1}}-1)z^{^{2}}+1],
\label{eq:son}
\end{equation}

\noindent and\ \ $son'=0$,\ \ where
\begin{equation}
son'=-2c[(b_{_{1}}+1)z^{^{5}}+(b_{_{1}}+4)z^{^{3}}+3z]t+[(b_{_{1}}+1)z^{^{4}}+b_{_{1}}z^{^{2}}-1]Y-2c[(b_{_{1}}-1)z^{^{2}}+1].
\label{eq:son'}
\end{equation}
\noindent
The shock  speed,\ \ $s$,\ \  is written as
\begin{equation}
s=\displaystyle\frac{c[(b_{_{1}}+1)tz^{^{4}}+b_{_{1}}z^{^{2}}t+(b_{_{1}}+2)z-t]}{b_{_{1}}(z^{^{2}}+1)}.
\label{eq:speed}
\end{equation}

Parametric equations for Hugoniot curves in\ \ $z$,\ \ $t$,\ \ $Y$\ \ coordinates are obtained by replacing\ \ $\widetilde{U}$,\ \ $\widetilde{V}$\ \ and\ \ $X$\ \ into equations in (\ref{eq:KL}), by their values in terms of\ \ $z$,\ \ $t$,\ \ $Y$\ \ and solving the system with respect to\ \ $t$\ \ and\ \ $Y$. We get:
\begin{equation}
\left\{\begin{array}{l}
z=z,\\	
t=\displaystyle \frac{(v_{_{0}}+a_{_{3}})b_{_{1}}z^{^{3}}+(b_{_{1}}u_{_{0}}+a_{_{1}}-a_{_{4}})z^{^{2}}-[b_{_{1}}(v_{_{0}}+a_{_{2}})+2c]z+b_{_{1}}u_{_{0}}+a_{_{1}}-a_{_{4}}}{c(z^{^{2}}+1)[(b_{_{1}}-1)z^{^{2}}+1]},\\	
Y=\dfrac{-2(v_{_{0}}+a_{_{3}})z^{^{2}}+2(b_{_{1}}u_{_{0}}+a_{_{1}}-a_{_{4}})z+2(v_{_{0}}+a_{_{2}})}{(b_{_{1}}-1)z^{^{2}}+1}.
\end{array}
\right.
\label{eq:parahug}
\end{equation}

 From the expression of\ \ $Y$\ \ in (\ref{eq:parahug}), we see that a Hugoniot curve intersects\ \ $\mathcal{C}$ depending on whether\ \ $\mathcal{E}_{_{ss}}=(b_{_{1}}u_{_{0}}+a_{_{1}}-a_{_{4}})^{^{2}}+4(v_{_{0}}+a_{_{3}})(v_{_{0}}+a_{_{2}})$\ \ is positive or negative: It intersects\ \  $\mathcal{C}$\ \ in two points if\ \ $\mathcal{E}_{_{ss}}>0$\ \ and does not intersect\ \ $\mathcal{C}$\ \ if\ \ $\mathcal{E}_{_{ss}}<0$. 

\begin{definition}
 The curve\ \ $\mathcal{E}_{_{ss}}=0$\ \ is called \emph{coincidence curve} in the state space, it is an ellipse. Its interior is called \emph{elliptic region} and its exterior is called \emph{hyperbolic region}.
\end{definition}

As we shall see in Subsection \ref{subsec:raref}, there are no rarefaction curves through points of the elliptic region and through every point of the hyperbolic region pass two transversal rarefaction curves.

Parametric equations for Hugoniot$'$ curves are obtained from equations (\ref{eq:parahug}) by interchanging\ \  $Y$\ \ with\ \ $-Y$. 

Substituting the second equation of (\ref{eq:parahug}) into equation (\ref{eq:speed}), we get the expression of the shock speed\ \  $s=s(u_{_{0}}, v_{_{0}},z)$\ \  along a Hugoniot curve.

In order to get the expression of a Hugoniot curve through a given point\ \  $(z_{_{0}},t_{_{0}},Y_{_{0}})$,\ \  we start by solving system  \eqref{eq:parahug} in terms of\ \  $u_{_{0}}$\ \  and\ \  $v_{_{0}}$\ \  obtaining
\begin{equation}\label{eq:tranf-u0v0}
\left\{
\begin{array}{l}
u_{_{0}}(z,t,Y)=\dfrac{b_{_{1}}z(z^{^{2}}+1)Y+2c(z^{^{4}}-1)t-2(a_{_{1}}-a_{_{4}})z^{^{2}}+4cz-2(a_{_{1}}-a_{_{4}})}{2b_{_{1}}(z^{^{2}}+1)},\\\\
v_{_{0}}(z,t,Y)=\dfrac{(z^{^{2}}+1)Y+2cz(z^{^{2}}+1)t-2a_{_{2}}(z^{^{2}}+1)-2cz^{^{2}}}{2(z^{^{2}}+1)}.
\end{array}
\right. 
\end{equation}

Substituting\ \ $u_{_{0}}(z_{_{0}},t_{_{0}},Y_{_{0}})$\ \  and\ \ $v_{_{0}}(z_{_{0}},t_{_{0}},Y_{_{0}})$\ \ from (\ref{eq:tranf-u0v0}) into equation  \eqref{eq:parahug}, we get the parametric equations for the Hugoniot curve through a point\ \  $(z_{_{0}},t_{_{0}},Y_{_{0}})$,

\begin{equation}
\left\{
\begin{array}{l}
z=z,\\
t=\displaystyle \frac{Dz^{^{3}}+Ez^{^{2}}+Fz+G}{c(z_{_{0}}^{^{2}}+1)[(b_{_{1}}-1)z^{^{4}}+b_{_{1}}z^{^{2}}+1]},\\	
Y=\displaystyle \frac{Az^{^{2}}+Bz +C}{(z_{_{0}}^{^{2}}+1)[(b_{_{1}}-1)z^{^{2}}+1]}.
\end{array}
\right.
\label{eq:hugponto}
\end{equation}

\noindent where
\begin{eqnarray*}
A&=&-[2c+2ct_{_{0}}z_{_{0}}(1+z_{_{0}}^{^{2}})+Y_{_{0}}(z_{_{0}}^{^{2}}+1)],\\
B&=&4cz_{_{0}}+2ct_{_{0}}(z_{_{0}}^{^{4}}-1)+b_{_{1}}Y_{_{0}}z_{_{0}}(z_{_{0}}^{^{2}}+1),\\
C&=&-2cz_{_{0}}^{^{2}}+2ct_{_{0}}z_{_{0}}(1+z_{_{0}}^{^{2}})+Y_{_{0}}(z_{_{0}}^{^{2}}+1),\\
D&=&2cb_{_{1}}+2cb_{_{1}}t_{_{0}}z_{_{0}}(z_{_{0}}^{^{2}}+1)+b_{_{1}}Y_{_{0}}(z_{_{0}}^{^{2}}+1),\\
E&=&2ct_{_{0}}(1-z_{_{0}}^{^{4}})-b_{_{1}}z_{_{0}}Y_{_{0}}(z_{_{0}}^{^{2}}+1)-4cz_{_{0}},\\
F&=&4c(z_{_{0}}^{^{2}}+1)+2cb_{_{1}}t_{_{0}}z_{_{0}}(1+z_{_{0}}^{^{2}})+b_{_{1}}Y_{_{0}}(z_{_{0}}^{^{2}}+1)-2cb_{_{1}}z_{_{0}}^{^{2}},\\
G&=&-4cz_{_{0}}+2ct_{_{0}}(1-z_{_{0}}^{^{4}})-b_{_{1}}z_{_{0}}Y_{_{0}}(1+z_{_{0}}^{^{2}}).
\end{eqnarray*}

It is clear from the above expressions that, given a point\ \  $(z_{_{0}},t_{_{0}},Y_{_{0}}) \in \mathcal{W}$, its Hugoniot curve will intersect\ \ $\mathcal{C}$\ \ or not, depending on whether\ \ $B^{^{2}} -4AC$\ \ is positive or negative.  If\ \ $B^{^{2}} -4AC =0$, the Hugoniot curve will be tangent to\ \ $\mathcal{C}$\ \ along the \emph{ coincidence curve in\ \ $\mathcal{C}$}, denoted by\ \ $\mathcal{E_C}$. Let us see that in these new coordinates the coincidence curve in $\mathcal{C}$ is given simply by\ \ $t=0$, and, of course,\ \ $Y=0$. Straightforward computation shows that for\ \ $Y=0$,\ \  $B^{^{2}} -4AC=  4c^{^{2}} t{_{_{_{0}}}}^{^{2}}(1+z_{_{0}}^{^{2}})^{^{4}}$,\ \ and also computing\ \ $dY/dz$\ \ from (\ref{eq:hugponto}) and setting\ \ $t_{_{0}}=0$, $Y_{_{0}}=0$,\ \ we get 0, which shows that along the curve\ \ $Y=0$, $t=0$. The Hugoniot curves are tangent to the characteristic surface (the plane\ \ $Y=0$\ \ in these coordinates). So, the line\ \ $(z,t=0,Y=0)$, is the coincidence curve in\ \ $\mathcal{C}$. Let us call\ \  $\mathcal{C}_{_{s}}$\ \ the\ \ $t<0$\ \ region and\ \ $\mathcal{C}_{_{f}}$\ \ the\ \ $t>0$\ \ region.

\section{Characterizing $\mathcal{C}_{_{s}} $ and $\mathcal{C}_{_{f}}$\label{sec:cscf}}
\vspace{.5cm}

As we will see in Section \ref{sec:lax}, shock curve arcs will either start on\ \ 
$\mathcal{C}_{_{s}}$\ \ or on\ \ $Son'_{s}$. So we must characterize\ \ $\mathcal{C}_{_{s}}$\ \ and\ \ $\mathcal{C}_{_{f}}$. In Section \ref{sec:sonlifs}, we will define and characterize $Son'_{s}$ and $Son'_{_{f}}$.

The speed\ \ $s$\ \ along a Hugoniot curve through a point\ \ $(z_{_{0}},t_{_{0}},Y_{_{0}})$\ \ is obtained by substituting\ \ $u_{_{0}}(z_{_{0}},t_{_{0}},Y_{_{0 }})$\ \ and\ \ $v_{_{0}}(z_{_{0}},t_{_{0}},Y_{_{0}})$\ \ into the expression\ \ \  $s=s(u_{_{0}}, v_{_{0}},z)$\ \ giving 

\begin{equation}
s_{_{hug}}= \frac{s_{_{p3}}z^{^{3}}-s_{_{p2}}z^{^{2}}-s_{_{p1}}z+s_{_{p0}}}{2b_{_{1}}(z_{_{0}}^{^{2}}+1)[(b_{_{1}}-1)z^{^{2}}+1]},
\label{eq:velhug}
\end{equation}
\noindent where
\begin{eqnarray*}
s_{_{p3}}&=&b_{_{1}}(b_{_{1}}+1)\left\{Y_{_{0}}(z_{_{0}}^{^{2}}+1)+2c[1+z_{_{0}}t_{_{0}}(z_{_{0}}^{^{2}}+1)]\right\},\\
s_{_{p2}}&=&(b_{_{1}}+1)[b_{_{1}}z_{_{0}}Y_{_{0}}(z_{_{0}}^{^{2}}+1)+2c(2z_{_{0}}+t_{_{0}}z_{_{0}}^{^{4}}-t_{_{0}})],\\
s_{_{p1}}&=&b_{_{1}}\left\{Y_{_{0}}(z_{_{0}}^{^{2}}+1)+2c[z_{_{0}}t_{_{0}}(z_{_{0}}^{^{2}}+1)-2z_{_{0}}^{^{2}}-1]\right\},\\
s_{_{p0}}&=&b_{_{1}}z_{_{0}}Y_{_{0}}(z^{^{2}}+1)+2c(2z_{_{0}}+t_{_{0}}z_{_{0}}^{^{4}}-t_{_{0}}).
\end{eqnarray*}
The equation of the shock speed\ \ $s$\ \  along a Hugoniot curve through a point\ \ $(z_{_{0}},t_{_{0}},0)$\ \ on the characteristic surface (characteristic speed) is obtained by substituting\ \ $Y_{_{0}}=0$\ \ into equation \eqref{eq:velhug}

\begin{equation}
s_{_{ch}}=\displaystyle \frac{s_{_{ch3}}z^{^{3}}- s_{_{ch2}}z^{^{2}}- s_{_{ch1}} z+s_{_{ch0}}}{b_{_{1}}(z_{_{0}}^{^{2}}+1)[(b_{_{1}}-1)z^{^{2}}+1]},
\label{eq:velcar}
\end{equation}
\noindent where
\begin{eqnarray*}
s_{_{ch3}}&=&b_{_{1}}c[t_{_{0}}z_{_{0}}(1+z_{_{0}}^{^{2}})+1](1+b_{_{1}}),\\
s_{_{ch2}}&=&c(t_{_{0}}z_{_{0}}^{^{4}} +2z_{_{0}}-t_{_{0}})(1+b_{_{1}}),\\
s_{_{ch1}}&=&b_{_{1}}c(t_{_{0}}z_{_{0}}^{^{3}}-2z_{_{0}}^{^{2}}+t_{_{0}}z_{_{0}}-1),\\
s_{_{ch0}}&=&c(t_{_{0}}z_{_{0}}^{^{4}} +2z_{_{0}}-t_{_{0}}).
\end{eqnarray*}

\begin{lemma}

If\ \ $(z_{_{0}} ,t_{_{0}},0)$\ \ and\ \ $(z_{_{1}} ,t_{_{1}},0)$\ \ are the intersections of a Hugoniot curve with\ \ $\mathcal{C}$,\ \  then, one of them is on\ \ $\mathcal{C}_{_{s}}$\ \ and the other on\ \ $\mathcal{C}_{_{f}}$. Furthermore, the value of\ \ $s$\ \ at the point on\ \ $\mathcal{C}_{_{s}}$\ \ is always smaller then the value of\ \ $s$\ \ at the point on\ \ $\mathcal{C}_{_{f}}$ (thus the subscripts $s$ stand for \emph{slow} and $f$ for \emph{fast}).

\label{lemma:cf-cs}
\end{lemma}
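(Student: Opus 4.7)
My plan is to combine a continuity-and-connectedness argument on the hyperbolic region of state space with a single explicit calculation at a convenient test point, which avoids wrestling with the generic algebra in closed form.

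From the parametric equations \eqref{eq:parahug}, setting $Y=0$ produces a quadratic in $z$ whose discriminant is exactly $\mathcal{E}_{_{ss}}(u_{_{0}},v_{_{0}})$, so the hypothesis that the Hugoniot curve meets $\mathcal{C}$ transversely in two points is equivalent to $\mathcal{E}_{_{ss}}>0$. Denote the distinct real roots by $z_{_{0}},z_{_{1}}$ and let $t_{_{i}}=t(z_{_{i}})$ be the corresponding $t$-values read off from the second equation of \eqref{eq:parahug}. The symmetric product $t_{_{0}}t_{_{1}}$ is a continuous function of $(u_{_{0}},v_{_{0}})$ on the open set $\{\mathcal{E}_{_{ss}}>0\}$. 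If $t_{_{i}}$ were to vanish at some base point in this set, the corresponding intersection would land on $\mathcal{E_C}$ (the $t=0$ locus in $\mathcal{C}$); but $\mathcal{E_C}$ is precisely the tangency locus of Hugoniot curves with $\mathcal{C}$, which would force $\mathcal{E}_{_{ss}}=0$ and contradict transversality. Hence $t_{_{0}}t_{_{1}}$ is nonvanishing throughout the hyperbolic region, which in Case~IV is the exterior of the ellipse $\mathcal{E}_{_{ss}}=0$ and therefore connected. Its sign is thus constant and can be pinned down by a single calculation: at the test point $(u_{_{0}},v_{_{0}})=(1,1)$ with $b_{_{1}}=2$, $a_{_{1}}=a_{_{2}}=a_{_{4}}=0$, $a_{_{3}}=1$, one readily finds $t_{_{0}}>0>t_{_{1}}$, establishing $t_{_{0}}t_{_{1}}<0$ throughout. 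Consequently one intersection lies in $\mathcal{C}_{_{s}}$ and the other in $\mathcal{C}_{_{f}}$.

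For the speed inequality I first exploit the factorization
\[
(b_{_{1}}+1)z^{^{4}}+b_{_{1}}z^{^{2}}-1 \;=\; (z^{^{2}}+1)\bigl[(b_{_{1}}+1)z^{^{2}}-1\bigr],
\]
which allows \eqref{eq:speed} to be rewritten on $\mathcal{C}$ in the compact form
\[
s(z,t,0)\;=\;\frac{c}{b_{_{1}}}\left\{\, t\,\bigl[(b_{_{1}}+1)z^{^{2}}-1\bigr] \;+\; \frac{(b_{_{1}}+2)\,z}{z^{^{2}}+1}\,\right\}.
\]
Labelling the intersections so that $(z_{_{s}},t_{_{s}},0)\in \mathcal{C}_{_{s}}$ and $(z_{_{f}},t_{_{f}},0)\in \mathcal{C}_{_{f}}$ (globally well-defined thanks to the first part), the speed difference $s(z_{_{f}},t_{_{f}},0)-s(z_{_{s}},t_{_{s}},0)$ is a continuous function of $(u_{_{0}},v_{_{0}})$ on the hyperbolic region which can vanish only where the two intersections coalesce, i.e., on the boundary $\mathcal{E}_{_{ss}}=0$. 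Connectedness then forces a constant sign. Evaluating at the same test point yields the two values $2\pm\sqrt{3}$ (which match the eigenvalues of $DF(1,1)$), with the smaller one $2-\sqrt{3}$ attached to the point with $t<0$, so the sign is positive everywhere, as claimed.

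The only honest computation is at the single test point, together with the routine factorization of $(b_{_{1}}+1)z^{^{4}}+b_{_{1}}z^{^{2}}-1$. The main obstacle one might worry about, namely proving that $t_{_{i}}$ cannot vanish in the hyperbolic region, is resolved immediately by the characterization of $\mathcal{E_C}$ as the tangency locus of Hugoniot curves with $\mathcal{C}$.
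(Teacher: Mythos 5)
Your route is genuinely different from the paper's: the paper proves the lemma pointwise, by solving $Y=0$ explicitly to get the second root $z_1=[z_0-t_0(1+z_0^2)]/[1+t_0z_0(1+z_0^2)]$, checking $t_0\cdot t_1<0$ algebraically, and computing the difference of the characteristic speeds at the two intersection points in closed form, $c(z_0^2+1)t_0$; no global argument is needed. Your continuity-plus-connectedness mechanism, however, has a genuine gap. The set on which $t_0t_1$ (and your labelled speed difference) is defined, continuous and nonvanishing is not the hyperbolic region $\{\mathcal{E}_{ss}>0\}$ but that region minus the line $v_0+a_3=0$. On that line the leading coefficient $-(v_0+a_3)$ of the quadratic coming from $Y=0$ vanishes, so the Hugoniot curve meets $\mathcal{C}$ in only one point with finite $z$; the second intersection escapes to $z=\infty$, i.e.\ to $\mathcal{C}\cap\Pi$, outside the $(z,t,Y)$ chart, and the lemma's hypothesis (two intersections of the form $(z_i,t_i,0)$) simply fails there. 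Worse for your argument: as $(u_0,v_0)$ approaches this line, the escaping root satisfies $z_1\to\infty$ and $t_1=t(z_1)\to 0$ (the $t$-expression is a cubic over a quartic in $z$), so $t_0t_1\to 0$ although no tangency with $\mathcal{C}$ occurs; your step ``$t_i=0$ forces the intersection onto the coincidence curve, hence $\mathcal{E}_{ss}=0$'' does not see this degeneration, because it is a chart artifact rather than a coincidence-curve crossing. Finally, on this line $\mathcal{E}_{ss}=(b_1u_0+a_1-a_4)^2$, so the line is tangent to the ellipse, and the hyperbolic region minus it has exactly two connected components: the half-plane $\{v_0<-a_3\}$ and the rest. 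Your single test point $(1,1)$ (with $a_3=1$) lies in the second component, so the constant-sign conclusion is established only there; for data with $v_0<-a_3$ your proof says nothing.

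Two further points. First, the assertion that $s(z_f,t_f,0)-s(z_s,t_s,0)$ ``can vanish only where the two intersections coalesce'' is exactly the nontrivial content of the second claim and is asserted rather than proved; it can be justified by the fact that $s$ restricted to $\mathcal{C}$ is the characteristic speed, so the two intersection points of $sh(u_0,v_0)$ with $\mathcal{C}$ carry the two eigenvalues of $DF(u_0,v_0)$, which are distinct precisely on the strictly hyperbolic region --- you allude to this only parenthetically at the test point. Second, your test-point numbers ($t$-values of opposite sign, speeds $2\pm\sqrt{3}$) are correct, but they are not what the second equation of \eqref{eq:parahug} as printed yields: its $t$-numerator is inconsistent with \eqref{eq:KL}, \eqref{eq:tdef} and \eqref{eq:tranf-u0v0}, the correct numerator being $b_1(v_0+a_3)z^3-\beta z^2+[b_1(v_0+a_2)+2c]z-\beta$ with $\beta=b_1u_0+a_1-a_4$; with the printed formula both $t$-values at your test point come out positive, which would sink the argument. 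To close the gap you could add a test point in $\{v_0<-a_3\}$ together with an argument that the sign of $t_0t_1$ cannot flip across the line $v_0=-a_3$, or, more economically, do what the paper does and verify $t_0t_1<0$ and the speed-difference identity $c(z_0^2+1)t_0$ by direct computation, after which connectedness is not needed at all.
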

\begin{proof}
A Hugoniot curve through a point on the characteristic  has parametric equations
\begin{equation}
\left\{
\begin{array}{l}
z=z,\\	
t=\displaystyle \frac{D_{_{0}}z^{^{3}}+E_{_{0}}z^{^{2}}+F_{_{0}}z+G_{_{0}}}{c\left(z_{_{0}}^{^{2}}+1\right)[(b_{_{1}}-1)z^{^{4}}+b_{_{1}}z^{^{2}}+1]},\\	
Y=\displaystyle \frac{A_{_{0}}z^{^{2}}+B_{_{0}}z+C_{_{0}}}{(z_{_{0}}^{^{2}}+1)[(b_{_{1}}-1)z^{^{2}}+1]}.
\end{array}
\right.
\label{eq:hugch}
\end{equation}
\noindent where\ \ $A_{_{0}}$,\ $B_{_{0}}$,\ $C_{_{0}}$,\ $D_{_{0}}$,\  $E_{_{0}}$,\ $F_{_{0}}$\ \ and\ \ $G_{_{0}}$\ \ are obtained from (\ref{eq:hugponto}). It is just a matter of making\ \ $Y_{_{0}}=0$\ \ in  equations (\ref{eq:hugponto}). 

Solving equation\ \ $Y=0$\ \ we get\ \  $z_{_{0}}$\ \ and\ \ $z_{_{1}}=-[t_{_{0}}(1+z_{_{0}}^{^{2}})-z_{_{0}}]/[t_{_{0}}z_{_{0}}(1+z_{_{0}}^{^{2}})+1]$,\ \  values of\ \  $z$\ \ at the intersection points with\ \ $\mathcal{C}$.\ \ It is easy to see that\ \ $z_{_{0}}=z_{_{1}}$\ \ if and only if \ \ $t_{_{0}}=0$,\ \ characterizing  the \emph{coincidence curve}. 

Replacing\ \ $z$\ \ by\ \ $z_{_{1}}$\ \ in the expression of\ \ $t$\ \ in equations (\ref{eq:hugch}), we get the expression of\ \ $t_{_{1}}$. A straightforward computation shows that\ \ $t_{_{0}}\cdot t_{_{1}}<0$, so, an intersection point is on\ \ $\mathcal{C}_{_{s}}$\ \ and the other one on\ \ $\mathcal{C}_{_{f}}$.

Let\ \ $sch0$\ \ and\ \ $sch1$\ \  be the characteristic speeds for\ \ $z=z_{_{0}}$\ \ and\ \  $z=z_{_{1}}$. Straightforward computations give\ \ $sch0-sch1=c(z_{_{0}}^{^{2}}+1)t_{_{0}}$. It follows that\ \ $ sch0>sch1$\ \ if and only if\ \ $t_{_{0}}>0$. So, the value of\ \ $s$\ \ at the point on\ \ $\mathcal{C}_{_{f}}$\ \ is larger  than the value of\ \ $s$\ \ at the point on\ \ $\mathcal{C}_{_{s}}$.
\end{proof}

\section{Decomposition of the Wave Manifold\label{sec:waveman}}

In this section, we describe how the \emph{characteristic}, \emph{sonic} and $sonic'$ surfaces divide\ \ 
$\mathcal{W}-\Pi$.

Recall that, here,\ \ $\mathcal{W}-\Pi$\ \ is just the\ \ $(z,t,Y)$ space.  Let\ \ $\Pi_{_{z}}$\ \  be the fixed\ \ $z$\ \ plane. In it, we will consider the\ \ $t$\ \ and\ \ $Y$\ \ axis inherited from the\ \ $(z,t,Y)$ space. We will consider\ \ $t$\ \ to be the horizontal axis and\ \ $Y$\ \ to be the vertical axis.

It follows from equations \eqref{eq:son} and \eqref{eq:son'} that \ \ $Son$\ \ and\ \ $Son'$\ \ are ruled surfaces. Their intersections with\ \ $\Pi_{_{z}}$\ \ are straight lines. We will see how these intersections vary with\ \ $z$. The intersection\ \ $\mathcal{C} \cap \Pi_{_{z}}$\ \  is just the\ \ $t$-axis, \emph{i.e.}, the line\ \ $Y=0$.
 
Let\ \ $Sz = Son \cap \Pi_{_{z}}$\ \   and\ \ $S’z=Son’ \cap \Pi_{_{z}}$. For a fixed\ \ $z$,\  $Sz$  and  $S'z$ intersect at the point\ \ $(t=t_{_{0}}, Y=0)$,\ $t_{_{0}}$\ \  given in equation \eqref{eqinfle}. Letting\ \ $z$\ \ vary, these points in\ \ $\mathcal{C}$\ \ are on the inflection locus curve, $\mathcal{I}$, whose expression is given by:
\begin{equation}
t=\frac{(b_{_{1}}-1)z^{^{2}}+1}{z[(b_{_{1}}+1)z^{^{4}}+(b_{_{1}}+4)z^{^{2}}+3]}.\label{eqinfle}
\end{equation}
 For\ \  $z=0$,\ $Sz$\ \ and\ \ $S'z$, become horizontal lines  with equations\ \ $Y=2c$\ \  and\ \  $Y=-2c$, respectively.  Figure \ref{fig:01} illustrates typical relative positions of\ \ $Sz$\ \ and\ \ $S'z$\ \ in the\ \ $(t,Y)$-plane for some fixed values of\ \ $z$,\ $z<0$.
	

{ Besides intersecting at the inflection locus,\ \ $Son$\ \ and\ \ $Son'$\ \ intersect for\ \ 
\begin{equation}
z_{_{crit_{_{1}}}}= \frac{1}{ \sqrt{b_{_{1}}+1}} \quad\text{ and }\quad 
 z_{_{crit_{_{2}}}}=-\frac{1}{\sqrt{b_{_{1}}+1}}, \label{doublecon}
\end{equation}
\ \ values of\ \ $z$\ \ that cancel out the coefficient of\ \ $Y$\ \ in equations \eqref{eq:son} and \eqref{eq:son'}.  Straightforward computations show that for\ \  $z= z_{_{crit_{_{1}}}}$, we get\ \ $t_{_{1}}=-b_{_{1}} \sqrt{b_{_{1}}+1}/[2(b_{_{1}}+2)]$\ \ and for\ \ $z=z_{_{crit_{_{2}}}}$, we get\break $t_{_{2}}= b_{_{1}} \sqrt{b_{_{1}}+1} / [2(b_{_{1}}+2)]$. So, for these values of\ \  $z$,\ $Sz$\ \ and\ \ $S'z$, coincide and become the vertical lines\ \ $t=t_{_{1}}$\ \ and\ \ $t=t_{_{2}}$}. In the wave manifold\ \ $\mathcal{W}$\ \ we have two straight  lines\ \ $(z=z_{_{crit_{_{1}}}},t=t_{_{1}},Y)$\ \ and\ \ $(z=z_{_{crit_{_{2}}}},t=t_{_{2}},Y)$.

\begin{definition}
The straight lines\ \ $(z=z_{_{crit_{_{1}}}},t=t_{_{1}},Y)$\ \ and\ \ $(z=z_{_{crit_{_{2}}}},t=t_{_{2}},Y)$\ \ are called \emph{double sonic locus}.
 \end{definition}
 
 In this way,\ \  $Son \cap Son'$\ \ is formed by the  inflection locus and  the double sonic locus, \cite{Isaacson92}. 
 
 \begin{remark}
 In papers with classical approach the double sonic locus is called \emph{double contact}. 
 \end{remark}

\newpage 

\noindent
We can state,
\begin{proposition}
The characteristic, sonic and $sonic'$ surfaces divide\ \ $\mathcal{W}-\Pi$\ \ into twelve regions. 
\label{proposition:divideM}

\end{proposition}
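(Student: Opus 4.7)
The plan is to work in the chart $(z,t,Y)$ of $\mathcal{W}-\Pi$, where the three surfaces have the simple descriptions $\mathcal{C}=\{Y=0\}$, $Son=\{son=0\}$ and $Son'=\{son'=0\}$ with $son$, $son'$ the affine-in-$(t,Y)$ expressions \eqref{eq:son}, \eqref{eq:son'}. Each connected component of the complement inherits a sign triple $(\mathrm{sgn}\,Y,\mathrm{sgn}\,son,\mathrm{sgn}\,son')$ taking one of eight possible values, so the proposition reduces to counting how many connected open pieces realise each sign pattern.

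The key algebraic inputs are two identities. First, $son-son'=-2Q(z)Y$ with $Q(z)=(b_{_{1}}+1)z^{^{4}}+b_{_{1}}z^{^{2}}-1$, whose zeros are exactly $z=z_{_{crit_{_{1}}}},z_{_{crit_{_{2}}}}$ (cf.\ \eqref{doublecon}); this controls which side of $\mathcal{C}$ admits which mixed combination for $(\mathrm{sgn}\,son,\mathrm{sgn}\,son')$ on each of the three open intervals $(-\infty,z_{_{crit_{_{2}}}})$, $(z_{_{crit_{_{2}}}},z_{_{crit_{_{1}}}})$, $(z_{_{crit_{_{1}}}},\infty)$. Second, the leading coefficient of $t$ in \eqref{eq:son}, \eqref{eq:son'} vanishes at $z=0$ and the equations collapse there to $son=Y-2c$, $son'=-Y-2c$; the slice $\Pi_{0}$ is thus cut into four horizontal strips by the parallel lines $Y=0,\pm 2c$, and one reads off directly which sign patterns survive at $z=0$.

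Using these two tools I would go pattern by pattern through the eight triples, determining for each the maximal set of $z$-values on which it is realised. Four patterns are realised on a single connected $z$-interval and contribute one component each. The other four split: two of them ($(+,-,+)$ and $(-,+,-)$) are absent on $(z_{_{crit_{_{2}}}},z_{_{crit_{_{1}}}})$ because there $Q(z)<0$ forces $\mathrm{sgn}(son-son')=\mathrm{sgn}(Y)$, which excludes precisely these mixed combinations; and two of them ($(+,+,+)$ and $(-,+,+)$) are absent only at $z=0$, but the concurrence point $(t_{_{0}}(z),0)$ of $Sz$, $S'z$ and $\mathcal{C}$ escapes to $\pm\infty$ in opposite directions as $z\to 0^{\pm}$, so the $z>0$ and $z<0$ pieces of these patterns are pushed apart at infinity and cannot be reconnected inside a fixed sign class. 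Each of these four patterns therefore contributes two components, and the grand total is $4\cdot 1+4\cdot 2=12$.

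The most delicate part of the argument is verifying that within any single $z$-interval on which a sign pattern is realised the three-dimensional region is indeed a single connected component rather than breaking into further sub-pieces. I would settle this by observing that, away from the three degeneracy values $\{0,z_{_{crit_{_{1}}}},z_{_{crit_{_{2}}}}\}$, the lines $\mathcal{C}\cap\Pi_{z}$, $Sz$ and $S'z$ form a combinatorially constant configuration of three distinct concurrent lines through the point $(t_{_{0}}(z),0)$ depending smoothly on $z$; the six open planar sectors of such a configuration vary smoothly in $z$ and sweep out connected open tubes, so no additional splitting beyond the pinchings at the three degeneracy values can occur.
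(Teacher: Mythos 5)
Your proposal is correct, and it reaches the count of twelve by a genuinely different bookkeeping than the paper's. The paper's proof is a geometric sweep: it draws the three concurrent lines $\mathcal{C}\cap\Pi_{z}$, $Sz$, $S'z$ in each slice, restricts to $Y>0$ by symmetry, and counts regions by following how the planar sectors merge or get pushed to infinity at the three critical values $z_{crit_2}$, $0$, $z_{crit_1}$ --- the same degenerations you exploit (coincidence of $Sz$ and $S'z$ at $z=z_{crit_i}$, escape of the concurrence point to infinity as $z\to 0^{\pm}$). What you do differently is to stratify the complement by the sign triple $(\mathrm{sgn}\,Y,\mathrm{sgn}\,son,\mathrm{sgn}\,son')$ and let the two identities $son-son'=-2Q(z)Y$ and $son|_{z=0}=Y-2c$, $son'|_{z=0}=-Y-2c$ decide exactly where each of the eight patterns is realised, so the total $4\cdot 1+4\cdot 2=12$ follows from connectivity of each realisation set. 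This buys a cleaner, more checkable argument: every one of the twelve regions gets an explicit algebraic label, and it becomes transparent that several regions (the under-bridge/over-tunnel regions, patterns $(+,-,-)$ and $(-,-,-)$, and the four lateral regions, patterns $(+,+,+)$ and $(-,+,+)$) actually extend across the critical planes --- a point the paper's closing tally (``two regions for $z_{crit_2}<z<z_{crit_1}$; two for $z<z_{crit_2}$ and two for $z>z_{crit_1}$'') words rather loosely, even though its sweep handles the connections correctly. What the paper's route buys in exchange is the direct geometric description that feeds its later naming of the regions ($SS'$, lateral, bridge, tunnel). One step you should still write out explicitly: for a pattern realised on a connected set of $z$-values, connectedness of the region \emph{through} the degenerate slices $z\in\{0,z_{crit_1},z_{crit_2}\}$; your tube argument covers only the non-degenerate intervals. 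This is routine --- the region is open, every slice (degenerate or not) is a convex sector, strip or half-plane, and an open set whose nonempty convex fibers lie over a connected interval of $z$-values is connected --- but it is needed to rule out splitting at the pinch values for the patterns whose realisation interval crosses them.
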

\begin{proof}
It is enough to consider the half-space\ \ $Y>0$,\ \ since a symmetric division will appear in the\ \ $Y<0$\ \ half-space. For each fixed\ \ $z$\ \ we will look at the two-dimensional regions in which the plane\ \ $\Pi_{_{z}}$\ \ is divided and see how these two-dimensional regions form 3-dimensional regions as\ \ $z$\ \ moves. As described above, there are 3 critical values in the\ \ $z$ movement:\ \ $z=z_{_{crit_{_{2}}}}$,\ \ $z=0$\ \ and\ \ $z=z_{_{crit_{_{1}}}}$. The equation of\ \ $Sz$\ \ is obtained by solving \eqref{eq:son} for\ \ $Y$.





Let\ \ $(t_{_{0}},0)$\ \ be the intersection point of\ \ $Sz$\ \ and the\ \ $t-$axis. Taking\ \ $Y=0$\ \ in equation  \eqref{eq:son} and solving for\ \ $t$\ \ we have
\begin{equation}
t_{_{0}}= -\frac{[(b_{_{1}}-1)z^{^{2}}+1]}{z[(b_{_{1}}+1)z^{^{4}}+(b_{_{1}}+4)z^{^{2}}+3]}.
\label{eq:t0}
\end{equation}
\noindent
We see that\ \ $|t_{_{0}}|$\ \ increases as\ \ $z \to 0$. 

The straight line\ \  $S’z$\ \ obtained from  \eqref{eq:son'}  also goes through\ \ $(t_{_{0}},0)$\ \ and its slope is the slope of\ \ $Sz$\ \ with reversed sign.

Let us draw\ \  $Sz$\ \ and\ \ $S'z$\ \ for values of\ \ $z$\ \ going from\ \ $z<z_{_{crit_{_{2}}}}$\ \ 
to\ \ $z>z_{_{crit_{_{1}}}}$\ \ in the half-plane\ \ $Y>0$.

See Figure \ref{fig:01}. We start with\ \ $z<z_{_{crit_{_{2}}}}$. For a fixed\ \ $z$\ \ in this region,\ \ $Sz$,\ $S'z$\ \ and the\ \ $t-$axis define three regions in the half plane\ \ $Y>0$:\ \ $tS'z$ bounded by the\ \ $t-$axis and\ \ $S'z$;\ \ $S'zSz$\ \ bounded by\ \ $S'z$\ \ and\ \ $Sz$\ \ and\ \ $tSz$\ \ bounded by\ \ $Sz$\ \ and the\ \ $t-$axis. As\ \ $z$\ \  approaches the critical value,\ \ $Sz$\ \ and\ \ $S'z$\ \ become vertical, so, the region \ \ $SzS'z$\ \ for\ \  $z<z_{_{crit_{_{2}}}}$\ \ disappears. When\ \ $z$\ \ crosses the critical value,\ \ $Sz$\ \ and\ \ $S'z$\ \ just change their relative positions. So, in each side of the vertical line, we have three regions\ \ $tSz$,\ $SzS'z$\ \  and\ \ $tS'z$\ \ in the half plane\ \ $Y>0$. In Figure \ref{fig:01}, we show\ \ $Sz$\ \ and\ \ $S'z$\ \ for three different values of $z$, one value smaller than\ \ $z_{_{crit_{_{2}}}}$,\ $z= z_{_{crit_{_{2}}}}$\ \ and one bigger than\ \  $z_{_{crit_{_{2}}}}$, but still smaller than zero. To draw Figure \ref{fig:01}, we take\ \ $b_{_{1}}=8$,\ $c=1$. 
\begin{figure}[htpb]
	\begin{center}
		\includegraphics[scale=0.8,width=0.5\linewidth]{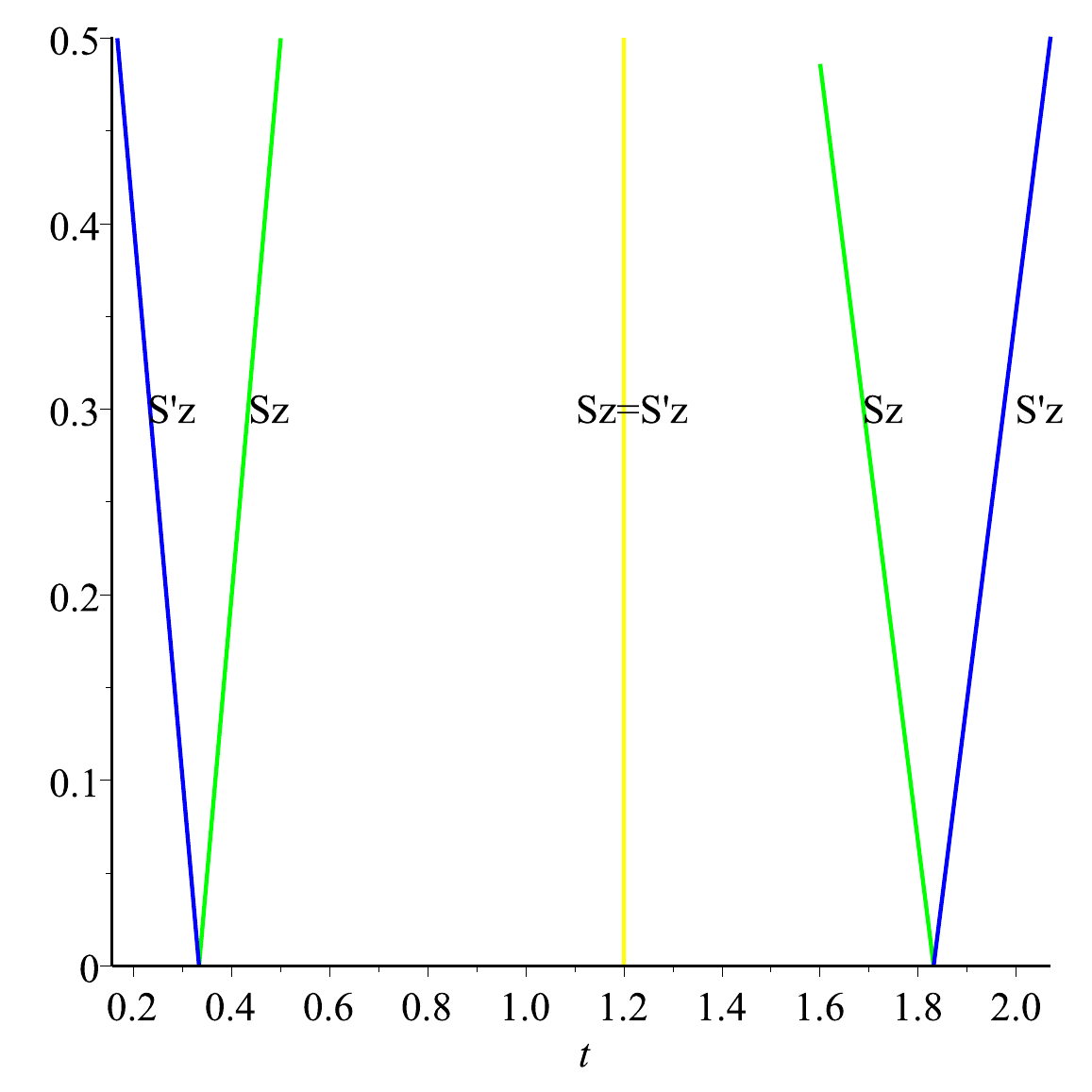}
		\caption[]{Typical relative positions of $Sz$(green rays) and $S'z$(blue rays) for three different values of $z<0$: $z<z_{_{crit_{_{2}}}}$, $z=z_{_{crit_{_{2}}}}$, $z_{_{crit_{_{2}}}}<z<0$, yellow ray represents $Sz=S'z$. Here $b_{_{1}}=8$, $c=1$} \label{fig:01}
	\end{center}
\end{figure}


Letting\ \ $z$\ \ vary, in the half space\ \ $(z<0,t, Y>0)$\ \ there are four regions: one bounded by\ \ $Son$\ \  and\ \  $Son'$\ \ contained in\ \ $z<z_{_{crit_{_{2}}}}$; one bounded by\ \ $Son$\ \ and\ \ $Son'$\ \ but contained in\ \ $z_{_{crit_{_{2}}}}<z<0$. The 3-dimensional regions generated by\ \ $tSz$\ \ and\ \ $tS'z$\ \ in\ \ $z<z_{_{crit_{_{2}}}}$\ \ connect with its corresponding region in\ \  $z_{_{crit_{_{2}}}}<z<0$\ \ generating two regions bounded by\ \ $Son$,\ \ $Son'$\ \ and\ \ $\mathcal{C}$\ \ contained in the half space\ \ $(z<0, t, Y>0)$. Symmetrically, there are four regions in the half space\ \ $(z>0,t,Y>0)$, see Figure \ref{fig:02} which illustrates how\ \ $\mathcal{C}$ (plane),\ \ $Son$ (green surface) and\ \ $Son'$ (blue surface) decompose the wave manifold\ \ $\mathcal{W}$. To draw Figure \ref{fig:02}, we take\ \ $b_{_{1}}=8$\ \ and\ \ $c=1$.

Let us describe how the three regions in\ \ $z_{_{crit_{_{2}}}}<z<0$\ \ connect with the three ones in\ \ $0<z<z_{_{crit_{_{1}}}}$. As\ \ $z \to 0^+$, the intersection point of\ \ $Sz$\ \ and\ \ $S'z$\ \ goes to\ \ $-\infty$,\ \ $Sz$\ \ and\ \  $S'z$\ \ become the lines\ \ $Y=2c$\ \ and\ \ $Y=-2c$, respectively. So, the region under\ \ $S'z$\   is pushed to\ \ $-\infty$\ \ and does not connect to the region on\ \ $z<0$, generating two regions: one above\ \ $Sz$\ \ and one limited by\ \ $Sz$\ \ and the\ \ $t-$axis. In this way we have six regions in the half space\ \ $(z,t,Y>0)$: two regions for\ \ $z_{_{crit_{_{2}}}}<z< z_{_{crit_{_{1}}}}$; two for\ \ $z<z_{_{crit_{_{2}}}}$\ \ and two for\ \ $z>z_{_{crit_{_{1}}}}$. Symmetrically there are six more regions in the half-space\ \ $(z,t,Y<0)$.
\end{proof}

\vspace{.5cm}


We will refer to Figure \ref{fig:02} in the description of the twelve regions in\ \ $\mathcal{W}$. \emph{Sonic} and $sonic'$ surfaces intersect along the inflection locus (the hyperbola-like curve in the characteristic surface) and also along two vertical lines transversal to the characteristic. We call\ \  $SS'$\ \ the  3-dimensional regions contained in\ \  $z^{^{2}}> 1/(b_{_{1}}+1)$\ \ and bounded by\ \ $Son$,\ \ $Son'$\ \ and\ \ $IL$. There are four such regions, which we can specify by the signs of\ \ $z$\ \ and\ \ $Y$. We call \emph{lateral regions}, the regions bounded by\ \ $Son$,\ \ $\mathcal{C}$\ \ and\ \ $Son'$\ \ contained, respectively, in\ \ $z>0$\ \ and in\ \ $z<0$. There are four such regions, again we can specify them by the signs of\ \ $z$\ \ and\ \ $Y$. Let us describe the four regions in\ \ $z^{^{2}}< 1/(b_{_{1}}+1)$. In subspace\ \ $Y>0$,\ \ the sonic surface is connected, we call it \emph{bridge}, and the sonic$'$ surface is not connected. We have two regions: the \emph{over bridge} region bounded by\ \ $Son$\ \ and\ \ $Son'$\ \ and the \emph{under bridge} region bounded by\ \ $\mathcal{C}$,\ \ $Son$\ \ and\ \ $Son'$. In subspace\ \ $Y<0$,\ \ the $sonic'$ surface is connected, we call it \emph{tunnel}, and the sonic surface is not connected. We have two regions: one bounded by\ \ $Son$,\ \ $Son'$\ \ and\ \ $\mathcal{C}$,\ \ called \emph{over tunnel} and other bounded by\ \ $Son$\ \ and\ \ $Son'$,\ \ called \emph{under tunnel}.
\begin{figure}[htpb]
\begin{center}
\includegraphics[scale=0.8,width=0.5\linewidth]{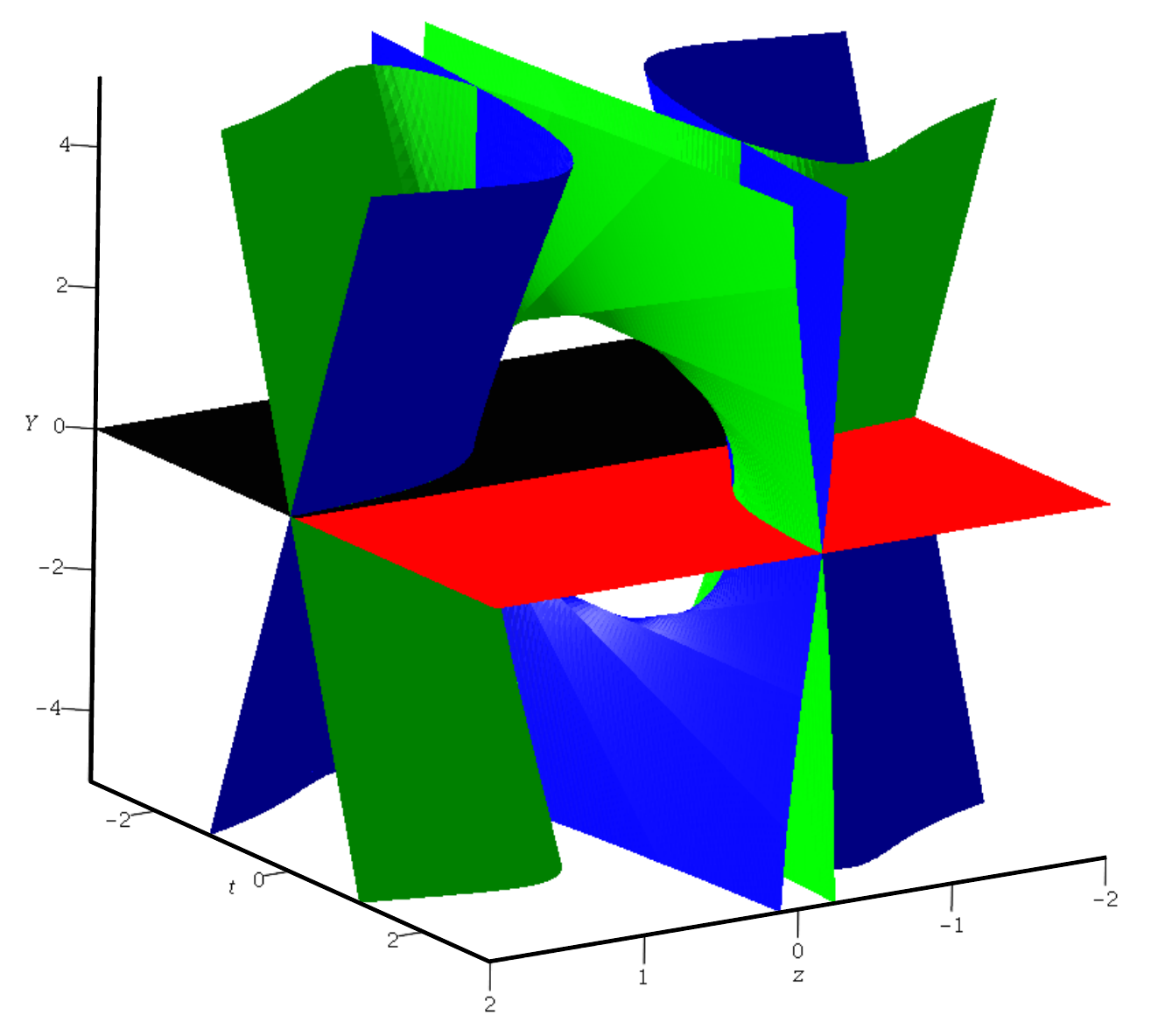}
\caption[]{ Decomposition of the wave manifold $\mathcal{W}$ in twelve regions defined by $\mathcal{C}$( plane), $Son$( green surface) and $Son'$( blue surface). Here $b_{_{1}}=8$ and $c=1$}\label{fig:02}
\end{center}
\end{figure}


We recall that the coincidence curve on\ \ $\mathcal{C}$\ \ is the common boundary of\ \ $\mathcal{C}_{_{f}}$\ \ and\ \ $\mathcal{C}_{_{s}}$. Hugoniot curves through points of the coincidence curve are tangent to\ \ $\mathcal{C}$\ \ and\ \ $Son'$, see \cite{Eschenazi02}. { In order to prove this fact in these new coordinates we introduce:}
	
\begin{definition}
The two-dimensional sub-manifold of\ \ $\mathcal{W}$\ \ generated by Hugoniot curves through points of the coincidence curve is called \emph{saturated of the coincidence curve} by Hugoniot curves, denoted by\ \  $SCC$.
\end{definition}
 
\begin{lemma}
The surface\ \ $SCC$\ \ is tangent to\ \ $\mathcal{C}$\ \ and to\ \ $Son'$.
\label{lemma:Tf}
\end{lemma}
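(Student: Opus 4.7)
The plan is to build an explicit smooth parameterization of $SCC$ and then verify each tangency by comparing tangent planes. First, setting $(z_{_{0}},t_{_{0}},Y_{_{0}})=(z_{_{0}},0,0)$ in the parametric equations \eqref{eq:hugponto} of Hugoniot curves, and using the factorization $(b_{_{1}}-1)z^{^{4}}+b_{_{1}} z^{^{2}}+1=[(b_{_{1}}-1)z^{^{2}}+1](z^{^{2}}+1)$ together with division of the $t$-numerator by $(z-z_{_{0}})$, I obtain
\begin{align*}
Y(z_{_{0}},z) &= \frac{-2c(z-z_{_{0}})^{^{2}}}{(z_{_{0}}^{^{2}}+1)\,[(b_{_{1}}-1)z^{^{2}}+1]},\\
t(z_{_{0}},z) &= \frac{2(z-z_{_{0}})\bigl[b_{_{1}} z^{^{2}}+z_{_{0}}(b_{_{1}}-2)z+2\bigr]}{(z_{_{0}}^{^{2}}+1)(z^{^{2}}+1)\,[(b_{_{1}}-1)z^{^{2}}+1]}.
\end{align*}
This yields a smooth parameterization $\Phi(z_{_{0}},z)=(z,t(z_{_{0}},z),Y(z_{_{0}},z))$ of $SCC$, with the coincidence curve as the image of the diagonal $\{z=z_{_{0}}\}$.

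Tangency of $SCC$ to $\mathcal{C}=\{Y=0\}$ is then immediate: the factor $(z-z_{_{0}})^{^{2}}$ in $Y$ forces both $\partial_{z}Y$ and $\partial_{z_{_{0}}}Y$ to vanish on the diagonal, so at a coincidence point $(z_{_{0}},0,0)$ both tangent vectors $\partial_{z}\Phi$ and $\partial_{z_{_{0}}}\Phi$ lie in $\{Y=0\}$. The tangent planes of $SCC$ and $\mathcal{C}$ therefore coincide along the whole coincidence curve.

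For tangency to $Son'$, I substitute $\Phi(z_{_{0}},z)$ into \eqref{eq:son'}, using the auxiliary factorizations $(b_{_{1}}+1)z^{^{4}}+(b_{_{1}}+4)z^{^{2}}+3=(z^{^{2}}+1)[(b_{_{1}}+1)z^{^{2}}+3]$ and $(b_{_{1}}+1)z^{^{4}}+b_{_{1}} z^{^{2}}-1=(z^{^{2}}+1)[(b_{_{1}}+1)z^{^{2}}-1]$, to put the result in the form $son'\circ\Phi=-2c\,\tilde N(z_{_{0}},z)/\{(z_{_{0}}^{^{2}}+1)[(b_{_{1}}-1)z^{^{2}}+1]\}$ for an explicit polynomial $\tilde N$ in $(z_{_{0}},z)$. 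The intersection curve $SCC\cap Son'$ is then $\{\tilde N=0\}$, and tangency of $SCC$ to $Son'$ along it is equivalent to $\partial_{z}\tilde N=\partial_{z_{_{0}}}\tilde N=0$ on this locus. I would invoke the result from \cite{Eschenazi02} that each Hugoniot curve through a coincidence point is tangent to $Son'$ at its intersection(s)---equivalently, $\tilde N$, viewed as a polynomial in $z$ with $z_{_{0}}$ as parameter, has the relevant roots with multiplicity at least two---to obtain $\partial_{z}\tilde N=0$ on the tangency locus; implicit differentiation of $\tilde N(z_{_{0}},z^{*}(z_{_{0}}))=0$ in $z_{_{0}}$, combined with $\partial_{z}\tilde N=0$, then forces $\partial_{z_{_{0}}}\tilde N=0$ as well.

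The main obstacle is the algebraic verification of the double-root structure of $\tilde N$ in $z$. I expect this to follow from identifying a perfect-square factor in the combination of the three summands defining $\tilde N$, exploiting the common $(z-z_{_{0}})$ factor already visible in the parameterization and the matching pairs involving $[(b_{_{1}}+1)z^{^{2}}\pm\text{const}]$ in the coefficient factorizations; this is the computational heart of the lemma and is precisely the content borrowed from \cite{Eschenazi02}.
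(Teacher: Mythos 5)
Your treatment of the tangency to $\mathcal{C}$ is correct and complete, and it is the parameterized counterpart of the paper's own argument: you exhibit the factor $(z-z_{0})^{2}$ in the $Y$-component, so that both coordinate tangent vectors of $SCC$ lie in $\{Y=0\}$ along the coincidence curve, whereas the paper writes $SCC$ implicitly as a quadratic in $Y$ obtained from the ellipse condition, equation \eqref{eq:tfequ}, and observes that setting $Y=0$ leaves the perfect square $4c^{2}t^{2}(z^{2}+1)^{2}$. The only addition needed on your side is the remark that $\partial_{z_{0}}t\neq 0$ on the diagonal, so that $\partial_{z}\Phi$ and $\partial_{z_{0}}\Phi$ actually span the tangent plane of $SCC$ there (they do: $\partial_{z_{0}}t=-4/(z_{0}^{2}+1)^{2}$ for the true Hugoniot parameterization).

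The $Son'$ tangency, however, has a genuine gap, and it sits exactly where you wrote ``I expect this to follow'': the double-root structure of $\tilde N$ \emph{is} the content of the lemma, and you never establish it. Outsourcing it to \cite{Eschenazi02} inverts the paper's logic: the paper quotes the curve-level tangency from \cite{Eschenazi02} and introduces $SCC$ expressly ``in order to prove this fact in these new coordinates''; its proof then carries out the computation you postpone, by solving $son'=0$ for $Y$ (equation \eqref{eq:sonlizt}), substituting into \eqref{eq:tfequ}, and displaying the explicit perfect square \eqref{eq:Tfzt}. A proof that cites the tangency of Hugoniot curves to $Son'$ as an input does not achieve what this lemma exists to do.

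Moreover, the deferred computation is not bookkeeping, because as you set it up it would fail. The printed coefficients of \eqref{eq:hugponto}, from which you built $\Phi$, are internally inconsistent: for $Y_{0}=0$ the curve they define through $(z_{0},t_{0},0)$ actually passes through $(z_{0},2t_{0},0)$, and along your $\Phi$ the functions $u_{0},v_{0}$ of \eqref{eq:tranf-u0v0} are not constant; your $t(z_{0},z)$ is exactly twice the $t$-component of the true Hugoniot curve through $(z_{0},0,0)$. With your $\Phi$ the claimed tangency is false: for $b_{1}=8$, $c=1$, $z_{0}=1$ one finds $son'\circ\Phi\approx -0.05$ at $z=0.1$ and $\approx +0.73$ at $z=0.2$, i.e.\ the surface you actually parameterized crosses $Son'$ transversally. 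Removing the spurious factor $2$ (re-derive the Hugoniot curve directly from \eqref{eq:tdef} and the conditions $u=u_{0}$, $v=v_{0}$; one gets your $t(z_{0},z)$ divided by $2$ and the same $Y(z_{0},z)$) restores the statement: for instance, for $z_{0}=5$ the corrected curve meets $Son'$ at $(z,t,Y)=(1,-5/13,-2/13)$, a point of the hysteresis curve \eqref{eq:parsonli}, and there the numerator of $son'\circ\Phi$ vanishes together with both of its partial derivatives in $z$ and $z_{0}$. So the step you declined to perform is where the proof lives, and performing it is also what exposes and repairs the error in your starting formulas.
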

\begin{proof}
A Hugoniot curve through a point of the coincidence curve is given by\ \ $2\widetilde{U}+b_{_{1}}zY={2(b_{_{1}}u_{_{0}}+a_{_{1}}-a_{_{4}})}$,\ $2(V-c)+Y=2{(v_{_{0}}+a_{_{2}})}$, where\ \ $(u_{_{0}},v_{_{0}})$\ \ is a point of the ellipse\ 
\begin{equation}
{\mathcal{E}=(b_{_{1}}u_{_{0}}+a_{_{1}}-a_{_{4}})^{^{2}}+4(v_{_{0}}+a_{_{3}})(v_{_{0}}+a_{_{2}})=0}.
\label{eq:ellipse}
\end{equation}

Using equations \eqref{eq:tdef} in the expressions of\ \ $u_{_{0}}$\ \ and\ \ $v_{_{0}}$\ \ and substituting into the ellipse equation \ref{eq:ellipse}, we get that the surface\ \ $SCC$\ \ is given by:
\begin{equation}
\frac{tf_{_{1}}Y^{^{2}}+tf_{_{2}}Y+tf_{_{3}}}{z^{^{2}}+1}=0,
\label{eq:tfequ}
\end{equation}
where
\begin{eqnarray*}
tf_{_{1}}&=&(z^{^{2}}+1)(b_{_{1}}^{^{2}}z^{^{2}}+4),\\
tf_{_{2}}&=&4cz(z^{^{2}}+1)(b_{_{1}}z^{^{2}}-b_{_{1}}+4)t+2[(b_{_{1}}-1)z^{^{2}}+1],\\
tf_{_{3}}&=&4c^{^{2}}t^{^{2}}(z^{^{2}}+1)^{^{3}}.
\end{eqnarray*}
The intersection of\ \ $SCC$ \ \ and \ \ $\mathcal{C}$\ \ is obtained putting\ \ $Y=0$\ \ in equation \eqref{eq:tfequ}. Doing so, we get\ \ $4c^{^{2}}t^{^{2}}(z^{^{2}}+1)^{^{2}}=0$. It follows that\ \ $SCC$\ \ is tangent to\ \ $\mathcal{C}$\ \ along the straight line\ \ $t=0$,\ \ the coincidence curve.

Hugoniot curves are tangent to\ \ $Son'$\ \ along a curve called {\emph{{hysteresis' curve}}}, somewhat improperly referred as sonic fold in \cite{Eschenazi02}.
To show that\ \ $SCC$\ \ surface is tangent to\ \ $Son'$\ \ we solve the equation of \eqref{eq:son'} in\ \ $Y$, obtaining
\begin{equation}
 Y= \frac{2c[(b_{_{1}}+1)tz^{^{5}}+(b_{_{1}}+4)tz^{^{3}}+(b_{_{1}}-1)z^{^{2}}+3tz+1]}{(b_{_{1}}+1)z^{^{4}}+b_{_{1}}z^{^{2}}-1}.
\label{eq:sonlizt}
\end{equation}

By Substituting\ \ $Y$\ \ from equation \eqref{eq:sonlizt} into equation \eqref{eq:tfequ} we obtain
\begin{equation}
4c^{^{2}} \left\{
 \frac{(1+z^{^{2}})[(b_{_{1}}+1)^{^{2}}z^{^{4}}+2(b_{_{1}}+3)z^{^{2}}+1]t+(2+b_{_{1}})z[(b_{_{1}}-1)
 	\label{eq:Tfzt}z^{^{2}}+1]}{(1+z^{^{2}})[(b_{_{1}}+1)z^{^{2}}-1]}
\right\}^{^{2}}=0.
\end{equation} 

So\ \ $SCC$\ \ is tangent to\ \ $Son'$.
\end{proof} 

\begin{remark}
The surface\ \ $SCC$\ \  is topologically a cylinder, each section\ \ $z=constant$\ \ is an ellipse. It is easy to see that all ellipses are tangent to the\ \ $t-axis$\ \ at\ \  $(0,0)$  and contained in the\ \  $Y<0$\ \ half plane. In this way\ \ $SCC$\ \ is contained in the\ \ $Y<0$\ \ half space, in fact, in the above tunnel region. The interior of\ \ $SCC$\ \ is the saturated of the elliptic region by Hugoniot curves and its exterior is the saturated of the hyperbolic region by Hugoniot curves.
\end{remark}

 We emphasize that equation \eqref{eq:Tfzt} is the projection of\ \ $Son'\cap { SCC}$,\ \ \emph{hysteresis$'$ curve} on the plane\ \ $(z,t)$.
 
Parametric equations for {\emph{{hysteresis' curve}}} curve are obtained solving  equation \eqref{eq:Tfzt} in\ \ $t$,\ \ and substituting\ \ $t(z)$\ \ { into} equation \eqref{eq:sonlizt}. Straightforward calculations give the parametric equations as,
\begin{equation}
\left \{
\begin{array}{l}
z=z, \\
t=-\dfrac{(b_{_{1}}+2)z[(b_{_{1}}-1)z^{^{2}}+1)]}{(z^{^{2}}+1)[(b_{_{1}}+1)^{^{2}}z^{^{4}}+2(b_{_{1}}+3)z^{^{2}}+1]},\\ 
Y=-\dfrac{2c[(b_{_{1}}-1)z^{^{2}}+1]}{(b_{_{1}}+1)^{^{2}}z^{^{4}}+2(b_{_{1}}+3)z^{^{2}}+1}.
\end{array}
\right.
\label{eq:parsonli}
\end{equation}

\section{Finding $Son'_{s}$ and $Son'_{_{f}}$ \label{sec:sonlifs}}

In this section, we describe how\ \ $Son'$\ \ splits into \emph{slow $sonic'$ surface},\ \ $Son'_{s}$,\ \ and \emph{fast $sonic'$ surface},\ \ $Son'_{_{f}}$. In order to do so, we will take a point in\ \  $Son'$,\ \ the Hugoniot curve through this point, find the intersection with\ \ $\mathcal{C}$,\ \ calculate the shock speed in each of these 2 points and see which point of\ \ $\mathcal{C}$\ \ has the same speed as our initial point in\ \ $Son'$.

Given a point\ \ $(t'_{_{0}},z_{_{0}},Y_{_{0}})$  on\ \ $Son'$,\ \ we obtain from equation \eqref{eq:son'} that

$$t'_{_{0}}= \frac{Y_{_{0}}(z_{_{0}}^{^{2}}+1)[(b_{_{1}}+1)z_{_{0}}^{^{2}}-1]-2c[(b_{_{1}}-1)z_{_{0}}^{^{2}}+1]}{2cz_{_{0}}(z_{_{0}}^{^{2}}+1)[(b_{_{1}}+1)z_{_{0}}^{^{2}}+3]}.$$

\noindent The shock speed at this point,\ \  $s_{_{son'}}(t_{_{0}}',z_{_{0}},Y_{_{0}})$,\ \ is obtained putting\ \  $t_{_{0}}=t_{_{0}}'$\ \ into equation \eqref{eq:speed},

\begin{equation}
s_{_{son'}}(t_{_{0}}',z_{_{0}},Y_{_{0}})=\frac{[(b_{_{1}}+1)z_{_{0}}^{^{2}}-1]^{^{2}}Y_{_{0}}+6c(b_{_{1}}+1)z_{_{0}}^{^{2}}+2c}{2b_{_{1}}z_{_{0}}[(b_{_{1}}+1)z_{_{0}}^{^{2}}+3]}.
\label{eq:speedson'}
\end{equation}



%

Parametric equations for the Hugoniot curve through\ \  $(t_{_{0}}',z_{_{0}},Y_{_{0}})$\ \ are obtained changing \ \ $t_{_{0}}$\ \ into\ \ $t_{_{0}}'$\ \ in equations \eqref{eq:hugponto}, giving\ \ $(t_{_{hu}}(z),z,Y_{_{hu}}(z))$. Solving equation\ \ $Y_{_{hu}}=0$,\ \ we obtain the\ \ $z$\ \ coordinates of  the intersection points of Hugoniot curve with\ \ $\mathcal{C}$,\
$$z_{_{\mathcal{C}_{_{1}}}}= \frac{(b_{_{1}}+1)z_{_{0}}^{^{2}}+1}{2z_{_{0}}}\ \ \ \text{and}\  \ \ z_{_{\mathcal{C}_{2}}}= -\frac{2(Y_{_{0}}-c)z_{_{0}}}{(b_{_{1}}+1)Y_{_{0}}z_{_{0}}^{^{2}}+Y_{_{0}}+2c}.$$
Substituting\ \ $z_{_{\mathcal{C}_{_{1}}}}$\ \ and\ \ $z_{_{\mathcal{C}_{2}}}$\ \ in the expression of\ \ $t_{_{hu}}(z)$ we obtain, respectively, the\ \ $t$\ \ coordinates of the intersection points, 
$$t_{_{\mathcal{C}_{_{1}}}}=\frac{2z_{_{0}}\left\{[(b_{_{1}}+1)^{^{2}}z_{_{0}}^{^{4}}+2(b_{_{1}}+3)z_{_{0}}^{^{2}}+1]Y_{_{0}}+2c[(b_{_{1}}-1)z_{_{0}}^{^{2}}+1]\right\}}{c[(b_{_{1}}+1)z_{_{0}}^{^{2}}+3][(b_{_{1}}+1)^{^{2}}z_{_{0}}^{^{4}}+2(b_{_{1}}+3)z_{_{0}}^{^{2}}+1]}$$
and
$$t_{_{\mathcal{C}_{2}}}=-\frac{(A_{_{t_{_{\mathcal{C}_{2}}}}}Y_{_{0}}+B_{_{t_{_{\mathcal{C}_{2}}}}})([(b_{_{1}}+1)z_{_{0}}^{^{2}}+1]Y_{_{0}}+2c)^{^{2}}}{2cz_{_{0}}[(b_{_{1}}+1)z_{_{0}}^{^{2}}+3](C_{_{t_{_{\mathcal{C}_{2}}}}}Y_{_{0}}^{^{2}}+D_{_{t_{_{\mathcal{C}_{2}}}}}Y_{_{0}}+E_{_{t_{_{\mathcal{C}_{2}}}}})},$$
where
\begin{eqnarray*}
A_{_{t_{_{\mathcal{C}_{2}}}}}&=&(b_{_{1}}+1)^{^{2}}z_{_{0}}^{^{4}}+2(b_{_{1}}+3)z_{_{0}}^{^{2}}+1,\\
B_{_{t_{_{\mathcal{C}_{2}}}}}&=&2c[(b_{_{1}}-1)z_{_{0}}^{^{2}}+1)],\\
C_{_{t_{_{\mathcal{C}_{2}}}}}&=&[(b_{_{1}}+1)^{^{2}}z_{_{0}}^{^{4}}+2(b_{_{1}}+3)z_{_{0}}^{^{2}}+1],\\
D_{_{t_{_{\mathcal{C}_{2}}}}}&=&4c[(b_{_{1}}-1)z_{_{0}}^{^{2}}+1],\\
E_{_{t_{_{\mathcal{C}_{2}}}}}&=&4c^{^{2}}(z_{_{0}}^{^{2}}+1).
\end{eqnarray*}
\noindent
So, the intersection points of Hugoniot curve with\ \ $\mathcal{C}$\ \ are\ \ $\mathcal{C}_{_{1}}= (t_{_{\mathcal{C}_{_{1}}}}, z_{_{\mathcal{C}_{_{1}}}},0)$\ \ and\ \ $\mathcal{C}_{_{2}}= (t_{_{\mathcal{C}_{2}}}, z_{_{\mathcal{C}_{2}}},0)$. 

Substituting\ \ $\mathcal{C}_{_{1}}$\ \ and\ \ $\mathcal{C}_{_{2}}$\ \ into equation \eqref{eq:speed} we obtain, respectively
$$s_{_{\mathcal{C}_{_{1}}}}=\dfrac{\left\{(b_{_{1}}+1)^{^{3}}z_{_{0}}^{^{4}}+2[(b_{_{1}}+1)^{^{2}}-2]z_{_{0}}^{^{2}}+b_{_{1}}+1\right\}Y_{_{0}}+2c[(b_{_{1}}+1)^{^{2}}z_{_{0}}^{^{2}}+2z_{_{0}}^{^{2}}+b_{_{1}}+1]}{2z_{_{0}}b_{_{1}}[(b_{_{1}}+1)z_{_{0}}^{^{2}}+3]}$$
and
$$s_{_{\mathcal{C}_{2}}}=\dfrac{[(b_{_{1}}+1)z_{_{0}}^{^{2}}-1]^{^{2}}Y_{_{0}}+6c(b_{_{1}}+1)z_{_{0}}^{^{2}}+2c}{2b_{_{1}}z_{_{0}}[(b_{_{1}}+1)z_{_{0}}^{^{2}}+3]}.$$
A simple inspection shows that\ \ $s_{_{\mathcal{C}_{2}}}=s_{_{son'}}(t_{_{0}}',z_{_{0}},Y_{_{0}})$. 

According to Lemma \ref{lemma:cf-cs}, we must to study the sign of\ \  $t_{_{\mathcal{C}_{2}}}$. If\ \ $t_{_{\mathcal{C}_{2}}}>0$, the point\ \ $(t_{_{0}}',z_{_{0}},Y_{_{0}})$\ \ is in\ \ $Son'_{_{f}}$,\ \ otherwise it is in\ \ $Son'_{s}$.

\begin{proposition}  
The hysteresis$'$ curve and the straight line\ \ $(t,z=0,Y=-2c)$\ \ are the boundaries of\ \ $Son'_{s}$\ \ and\ \ $Son'_{_{f}}$.
\label{proposition:son'sson'f}
\end{proposition}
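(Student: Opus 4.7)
The plan is to exploit the explicit formula for $t_{\mathcal{C}_2}$ derived just before the proposition and combine it with Lemma \ref{lemma:cf-cs}. That lemma tells us that a point $(t'_{_{0}},z_{_{0}},Y_{_{0}})\in Son'$ belongs to $Son'_{_{f}}$ when $t_{\mathcal{C}_2}>0$ and to $Son'_{s}$ when $t_{\mathcal{C}_2}<0$, so the common boundary is exactly the locus where $t_{\mathcal{C}_2}$ changes sign. Writing
\begin{equation*}
t_{\mathcal{C}_2}=-\frac{\bigl(A_{t_{\mathcal{C}_2}}Y_{_{0}}+B_{t_{\mathcal{C}_2}}\bigr)\bigl([(b_{_{1}}+1)z_{_{0}}^{^{2}}+1]Y_{_{0}}+2c\bigr)^{^{2}}}{2cz_{_{0}}[(b_{_{1}}+1)z_{_{0}}^{^{2}}+3]\bigl(C_{t_{\mathcal{C}_2}}Y_{_{0}}^{^{2}}+D_{t_{\mathcal{C}_2}}Y_{_{0}}+E_{t_{\mathcal{C}_2}}\bigr)},
\end{equation*}
I would analyze each factor separately: the squared factor in the numerator is nonnegative and so does not contribute a sign change (it can only produce a double zero, which does not separate regions), and $2c[(b_{_{1}}+1)z_{_{0}}^{^{2}}+3]>0$ always.

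The key algebraic step, which I expect to be the main obstacle, is to show that the quadratic $C_{t_{\mathcal{C}_2}}Y_{_{0}}^{^{2}}+D_{t_{\mathcal{C}_2}}Y_{_{0}}+E_{t_{\mathcal{C}_2}}$ is strictly positive for $z_{_{0}}\neq 0$. The cleanest route is to compute its discriminant $D_{t_{\mathcal{C}_2}}^{^{2}}-4C_{t_{\mathcal{C}_2}}E_{t_{\mathcal{C}_2}}$ and verify, after careful expansion, that it simplifies to $-16c^{^{2}}z_{_{0}}^{^{2}}[(b_{_{1}}+1)z_{_{0}}^{^{2}}+3]^{^{2}}\le 0$. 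Combined with $C_{t_{\mathcal{C}_2}}>0$, this yields positivity. The nontrivial cancellation producing the perfect square is where the bookkeeping becomes delicate; I would organize the calculation by grouping powers of $z_{_{0}}^{^{2}}$ and using the identity $(b_{_{1}}-1)^{^{2}}-(b_{_{1}}+1)^{^{2}}=-4b_{_{1}}$.

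Once the denominator is known to have constant positive sign, the sign of $t_{\mathcal{C}_2}$ is governed by $-(A_{t_{\mathcal{C}_2}}Y_{_{0}}+B_{t_{\mathcal{C}_2}})/z_{_{0}}$, which can flip across only two loci. First, the vanishing of the linear factor $A_{t_{\mathcal{C}_2}}Y_{_{0}}+B_{t_{\mathcal{C}_2}}=0$ gives
\begin{equation*}
Y_{_{0}}=-\frac{2c[(b_{_{1}}-1)z_{_{0}}^{^{2}}+1]}{(b_{_{1}}+1)^{^{2}}z_{_{0}}^{^{4}}+2(b_{_{1}}+3)z_{_{0}}^{^{2}}+1},
\end{equation*}
which is exactly the $Y$-component of the parametric equations \eqref{eq:parsonli} of the hysteresis$'$ curve. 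Second, the vanishing of the denominator factor $z_{_{0}}$ intersected with $Son'$: substituting $z=0$ into \eqref{eq:son'} forces $Y=-2c$, so $Son'\cap\{z=0\}$ is exactly the straight line $(t,z=0,Y=-2c)$.

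To finish, I would confirm that $t_{\mathcal{C}_2}$ does flip sign across each of these two loci (and not merely vanish to higher order) by evaluating at convenient test points on either side, using the fact that $(A_{t_{\mathcal{C}_2}}Y_{_{0}}+B_{t_{\mathcal{C}_2}})$ changes sign linearly in $Y_{_{0}}$ and $1/z_{_{0}}$ changes sign as $z_{_{0}}$ crosses $0$. This establishes that these two sets exhaust the boundary separating $Son'_{s}$ from $Son'_{_{f}}$, proving the proposition.
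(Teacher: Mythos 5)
Your proposal is correct and follows essentially the same route as the paper: reduce to the sign of $t_{_{\mathcal{C}_{2}}}$ via Lemma \ref{lemma:cf-cs}, show the quadratic factor $C_{_{t_{_{\mathcal{C}_{2}}}}}Y_{_{0}}^{^{2}}+D_{_{t_{_{\mathcal{C}_{2}}}}}Y_{_{0}}+E_{_{t_{_{\mathcal{C}_{2}}}}}$ never vanishes for $z_{_{0}}\neq 0$ (the paper merely asserts the negative discriminant, whereas you compute it explicitly as $-16c^{^{2}}z_{_{0}}^{^{2}}\left[(b_{_{1}}+1)z_{_{0}}^{^{2}}+3\right]^{^{2}}$, which is indeed what the expansion gives), and identify the two sign-change loci with the hysteresis$'$ curve from \eqref{eq:parsonli} and the line $(t,z=0,Y=-2c)$. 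Your extra care with the squared factor in the numerator (a double zero that cannot separate regions) is a small but legitimate refinement of the paper's argument.
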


\begin{proof} We have to study the sign of\ \ $t_{_{\mathcal{C}_{2}}}$. The denominator is the product 
of\ \ $2cz_{_{0}}[(b_{_{1}}+1)z_{_{0}}^{^{2}}+3]$\ \ and a polynomial of degree 2 in\ \ $Y_{_{0}}$\ \ with negative discriminant. Since the coefficient of\ \ $Y_{_{0}}^{^{2}}$\ \ is positive the denominator has the same sign as\ \ $z_{_{0}}$. The numerator is a term of the form\ \ $A(z_{_{0}})Y_{_{0}}+B(z_{_{0}})$,\ \ $A$ \ \ and\ \ $B$\ \ positives, multiplied by a positive term, everything preceded by a minus sign.

It follows that the curves\ \ $ z = 0 $\ \ and\ \ $ AY + B = 0,$\ \ in\ \ $Son'$, are the boundaries of\ \ $ Son'_{s} $\ \ and\ \ $ Son'_{_{f}}$. If\ \ $Y>-B/A$\ \ and\ \ $z>0$\ \ the point is in\ \ $Son'_{s}$, switching according to sign change.
 
 The curve\ \ $AY+B=0$\ \ is the projection of hysteresis$'$ curve on  $zY$ plane (the third equation in \eqref{eq:parsonli}). In\ \ $Son'$, $z=0$\ \ is the straight line\ \ $(t,z=0,Y=-2c)$. It is the intersection of\ \ $Son'$\ \ with the surface\ \ $SSB$, the saturated of the secondary bifurcation by Hugoniot curves, (see Appendix 1). The straight line and the hysteresis$'$ curve intersect transversely forming a saddle point in\ \ $Son'$.
\end{proof}


\section{Lax's Shocks in $\mathcal{W}$\label{sec:lax}}


In this section, we introduce Admissibility condition for Hugoniot arcs in\ \ $\mathcal{W}$. By arc we mean the image of a closed interval. There will be two types of conditions, which are called simply type 1 and type 2.

We begin by introducing two simplifying assumptions, \cite{AEMP10}: 

\noindent
Giving a point\ \ $\mathcal{U} \in \mathcal{W}$, we use\ \ $sh(\mathcal{U})$\ \ ($sh'(\mathcal{U})$)\ \ to denote the Hugoniot (Hugoniot$'$) curve through\ \ $\mathcal{U}$. We define\ \ $\mathcal{U}_{_{s}}$,\ \ $\mathcal{U}_{_{f}}$,\ \ $\mathcal{U'}_{_{s}}$\ \ and\ \  $\mathcal{U}_{_{f}}'$\ \ to be the points
\begin{equation}
	\mathcal{U}_{_{s}}=sh (\mathcal{U}) \cap \mathcal{C}_{_{s}}, \hspace{.2cm}
	\mathcal{U}_{_{f}}=sh (\mathcal{U}) \cap \mathcal{C}_{_{f}},\hspace{.2cm}
	\mathcal{U}_{_{s}}^{\prime} = sh'(\mathcal{U}) \cap \mathcal{C}_{_{s}}, \hspace{.2cm} \text { and } \hspace{.2cm}
	\mathcal{U}_{_{f}}^{\prime} = sh'(\mathcal{U}) \cap \mathcal{C}_{_{f}}.
	\label{usuli}
\end{equation}

These points are fundamental to obtain the characteristic speed, $\lambda_{_{s}}$ (or $\lambda_{_{f}}$),\ \ associated to $\mathcal{U}$, see Fig. \ref{figureshock}.


\begin{assump}
	\label{assu:assumption1}
	We will deal only with points\ \ $\mathcal{U}$\ \ on\ \ $\mathcal{W}$\ \ such that the points
	$\mathcal{U}_{_{s}}$\ \ and\ \ $\mathcal{U}_{_{f}}$\ \ always exist, \emph{i.e.}, in the hyperbolic region. 
\end{assump}
\begin{assump}
	\label{assu:bifsec}
	We consider only Hugoniot and Hugoniot$'$ curves that are 
	diffeomorphic to $\mathbb{R}$, \emph{i.e.}, they do not 
	contain points in the secondary bifurcation locus.
\end{assump}

\noindent
{{Let us consider a fixed\ \ $\mathcal{U}\in\mathcal{W}$\ \ and 
we use\ \ $(\ref{usuli})$\ \ to define\ \ $\mathcal{U}'_{_{s}}$,\ $\mathcal{U}'_{_{f}}$,\medskip 


\noindent
\textbf{1.-} The Lax's shock of type 1: 
\begin{equation}
s(\tilde{\mathcal{U}}) < s({\mathcal{U}}_{_{s}}) = \lambda_{_{s}}(U)\ \ \ 
\lambda_{_{s}}(\tilde U') = s(\tilde{\mathcal{U}}_{_{s}}') < s(\tilde{\mathcal{U}}) < 
=\lambda_{_{f}} (\tilde U').\label{lax1}
\end{equation}
\noindent
\textbf{2.-} The Lax's shock of type 2:
\begin{equation}
s(\tilde{\mathcal{U}}) > s(\mathcal{U}_{_{f}}) = \lambda_{_{f}}(U)\ \ \ 
\lambda_{_{s}}(U') = s({\mathcal{U}}_{_{s}}') < s(\tilde{\mathcal{U}}) < s({U}_{_{f}}')
=\lambda_{_{f}} (U').\label{lax2}
\end{equation}

{Following the physics, Lax defines one- and two- shocks. Any of these shocks is a pair of states that satisfies the Rankine-Hugoniot equation and certain speed inequalities. Therefore, the definition \emph{shock}, \emph{1-shock}, \emph{2-shock} applies to points in the wave manifold, not to curves in the wave manifold. For the purpose of the topological construction of Riemann solutions, which is the object of this paper, it is may be convenient to use this nomenclature for certain curves of shock points in the wave manifold consisting of either 1-shocks or 2-shocks. It is also convenient to provide an orientation to these curves. So we use the nomenclature \emph{forward shock curve} to a curve oriented with decreasing speed consisting of 1-shocks. Similarly, \emph{backward shock curve} to a curve oriented with increasing speed consisting of 2-shocks. Other combination of orientations or Lax type are not necessary in this paper and will not be used. However, if any of these shocks are needed, they will receive their full name, for instance, \emph{1-backward shock}; or \emph{2-forward shock}.}



A Hugoniot arc curve is a \emph{ forward shock curve} if is oriented with decreasing speed and  satisfies the following conditions, \cite{AEMP10} : 

\begin{figure}[htpb]
	\begin{center}
		\includegraphics[scale=1.0,width=1.0\linewidth]{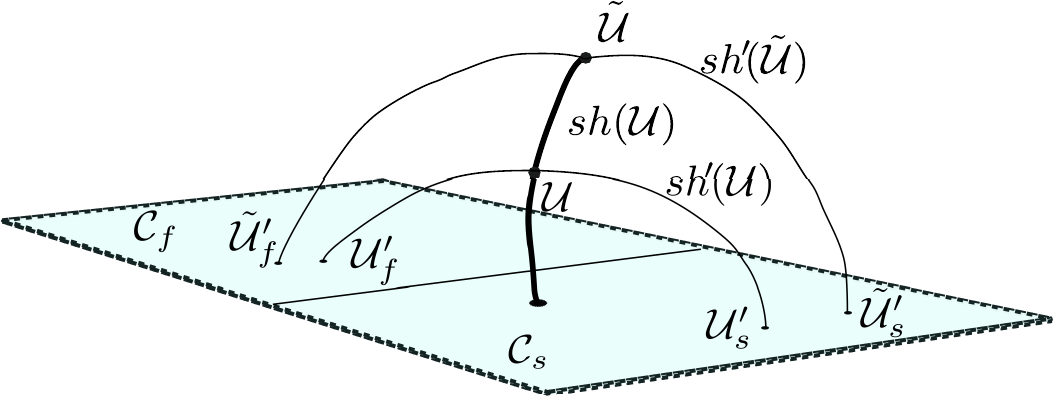}
		\caption[]{\label{figureshock}Consider $\mathcal{U}\in\mathcal{W}$. From $\mathcal{U}$, we take a $\tilde{\mathcal{U}}\in sh(\mathcal{U})$. We obtain the projections of $\mathcal{U}$  and $\tilde{\mathcal{U}}$ in $\mathcal{C}_{_{s}}$ and $\mathcal{C}_{_{f}}$ using Hugoniot$'$ curves, denoted as $\mathcal{U}'_{_{s}}$, $\tilde{\mathcal{U}}'_{_{s}}$, $\mathcal{U}'_{_{f}}$ and  $\tilde{\mathcal{U}}'_{_{f}}$. } 
	\end{center}
\end{figure}

\newpage


\noindent
\textbf{L1.1-} The speed\ \ $s$\ \ at any point\ \ $\mathcal{U}$\ \ in the arc satisfies\ \ $s(\mathcal{U})\leq s( \mathcal{U}_{_{s}})$,\medskip


\noindent
\textbf{L1.2-} The speed\ \ $s$\ \ at any point\ \ $\mathcal{U}$\ \ in the arc satisfies\ \ $s(\mathcal{U}_{_{s}}^{\prime})\leq s(\mathcal{U})\leq s(\mathcal{U}_{_{f}}')$.\\




It follows that:

\noindent\textbf{1.-} Since a forward shock curve is oriented with decreasing speed, it stops if it reaches $Son$, otherwise it goes to infinity(as $s$ decreases and $z$ goes to plus or minus infinity)


\noindent\textbf{2.-} From condition\ \ \textbf{L1.1}\ \  a  { forward shock curve} begins at\ \ $C_{_{s}}$\ \ or\ \ $Son'_{_{s}}$.\\



Forward shock curves starting at\ \ $\mathcal{C}_{_{s}}$\ \ are called \emph{ local} and forward shock curves starting at\ \ $Son'_{_{s}}$\ \ are called \emph{nonlocal}.

By continuity of\ \ $s$, given a Hugoniot arc with decreasing\ \ $s$, starting at\ \   $\mathcal{C}_{_{s}}$\ \ or in\ \  $Son'_{_{s}}$\ \ and ending in\ \ $Son$\ \ or at infinity, it is sufficient to check conditions \textbf{L1.1} or \textbf{L1.2} at the initial point, to verify whether it is a forward shock curve or not.

Since both conditions are trivially satisfied in\ \ $\mathcal{C}_{_{s}}$\ \ and \textbf{L1.1} is also trivially in\ \ $Son'_{_{s}}$, it is sufficient to verify \textbf{L1.2} at\ \ $Son'_{_{s}}$\ \ for the arc to be a forward shock curve.\\

In the same way, a Hugoniot arc curve is said a { (a backward shock curve)} if is oriented with increasing speed and satisfies the following conditions, \cite{AEMP10}:\medskip

\noindent

\noindent
\textbf{L2.1-} The speed\ \ $s$\ \ at any point\ \ $\mathcal{U}$\ \ in the arc satisfies\ \ $s(\mathcal{U})\geq s( \mathcal{U}_{_{f}})$;\medskip

\noindent
\textbf{L2.2-} The speed\ \ $s$\ \ at any point\ \ $\mathcal{U}$\ \ in the arc satisfies\ \ $s(\mathcal{U}_{_{s}}^{\prime})\leq s(\mathcal{U})\leq s(\mathcal{U}_{_{f}}')$.\\




It follows that:\\

\noindent\textbf{1.-}  Since a backward shock curve is oriented with increasing speed, it stops if it reaches\ \ $Son$, otherwise it goes to infinity (as\ \ $s$\ \  increases and\ \ $z$\ \ goes to plus or minus infinity.) 

\noindent\textbf{2.-} From condition\ \ \textbf{L2.1}\ \  a { backward shock curve} begins at\ \ $\mathcal{C}_{_{f}}$\ \ or\ \ $Son'_{_{f}}$. 
Backward shock curves starting at\ \ $\mathcal{C}_{_{f}}$\ \ are called \emph{local} and backward shock curves starting at\ \ $Son'_{_{f}}$\ \ are called \emph{nonlocal}.\\

By continuity of\ \ $s$, giving a Hugoniot arc with increasing\ \ $s$, starting at\ \ $\mathcal{C}_{_{f}}$\ \  or in\ \  $Son'_{_{f}}$\ \ and ending in\ \ $Son$\ \ or at infinity, it is sufficient to check conditions \textbf{L2.1} and \textbf{L2.2} at the initial point to verify whether it is a backward sock curve or not. Since both conditions are trivially satisfied in\ \  $\mathcal{C}_{_{f}}$, and \textbf{L2.1} is also trivially satisfied in\ \ $Son'_{_{s}}$, it is sufficient to verify \textbf{L2.2} at \ \ $Son'_{_{s}}$\ \ for the arc to be a backward shock curve.

\begin{remark}
	Since conditions \textbf{L1.2} and \textbf{L2.2} are the same, we refer to them simply as \textbf{L2}.
\end{remark}

\section{Rarefactions and Composites\label{sec:rare-comp}}

\subsection{Rarefactions\label{subsec:raref}}

Going back to equation\ \ $W_{_{t}}+F(W)_{_{x}}=0$, we will consider solutions of the form\ \ $\widetilde{W}(x/t)$,\ \ which will be called \emph{rarefaction waves}.  So, our equation becomes 
$$ (-x/t^{^{2}})\widetilde{W}’+(1/t)DF(\widetilde{W}).\widetilde{W}’=0, $$
where\ \ $\widetilde{W}= \widetilde{W}(\lambda)$,\ $\lambda=x/t$\ \ and\ \ $'$\ \ indicates differentiation with respect to\ \ $\lambda$.

We see that the curves parametrically defined in the\ \ $(u,v)$-plane by the rarefaction waves are the integral curves of the line fields defined by the eigenvectors of\ \ $DF(u,v)$. We call these curves \emph{rarefaction curves}.

For\ \ $F$\ \ given by (\ref{fgeq}) we have 
$$DF=\begin{bmatrix}
(b_{_{1}}+1)u+a_{_{1}}& v+a_{_{2}}\\
v+a_{_{3}}&u+a_{_{4}}\\
\end{bmatrix}$$

Let\ \ $r=(r_{_{1}},r_{_{2}})$\ \ be the eigenvector associated to the eigenvalue\ \ $\lambda$. Eliminating\ \ $\lambda$\ \ in the system
\begin{equation*}
\left\{\begin{array}{rl}
[(b_{_{1}}+1)u+a_{_{1}})]r_{_{1}}+(v+a_{_{2}})r_{_{2}}  &= \lambda r_{_{1}}\\
(a_{_{3}}+v)r_{_{1}}        + (a_{_{4}}+u)r_{_{2}} &= \lambda r_{_{2}}.
\end{array} \right.
\end{equation*}
and replacing\ \ $r_{_{1}}/r_{_{2}}$\ \  by\ \  $du/dv$  we get the differential equation: 

$$(v+a_{_{3}})\left(\frac{du}{dv}\right)^{^{2}} - (b_{_{1}}u+a_{_{1}}-a_{_{4}})\frac{du}{dv} - (v+a_{_{2}}) =0.$$ 

Let\ \ $\Delta$\ \ be the discriminant of this 2nd degree equation in\ \ $du/dv$,
$$\Delta= (b_{_{1}}u+a_{_{1}}-a_{_{4}})^{^{2}} +4(v+a_{_{2}})(v+a_{_{3}}).$$

When\ \ $\Delta=0$, the equation of the \emph{coincidence curve} defined in Subsection \ref{sec:Yzt-subsection}, it is an ellipse. As stated before, its interior is called elliptic region, and there are no rarefaction curves there. The exterior of ellipse is called hyperbolic region  and here, through every point pass two rarefaction curves transversal to each other.

To study this differential equation, we introduce a new variable\ \ $z$, and replace the differential equation by the pair of equations
\begin{equation*}
\left\{\begin{array}{rl}
G_{_{2}}=(v+a_{_{3}})z^{^{2}} -(b_{_{1}}u+a_{_{1}}-a_{_{4}})z -(v+a_{_{2}}) &=0\\
zdv-du&=0.
\end{array} \right.
\end{equation*}
which we see as a differential equation in a surface in\ \ $(u,v,z)$-space. Since\ \ $z$\ \ is a direction, $(u,v,z)$-space is actually\ \ $\mathbb{R}^{^{2}} \times \mathbb{R}P^{^{1}}$.

The integral curves of\ \ $zdv-du=0$\ \ in this surface are also called \emph{rarefaction curves}. Their projections onto the\ \ $(u,v)$-plane by\ \ $(u,v,z)\rightarrow (u,v)$\ \ are the rarefaction curves defined above.

By introducing new variables:\ \ $\widetilde{U}=b_{_{1}}u +a_{_{1}}-a_{_{4}}$\ \ and\ \ $V_{_{1}}= v+a_{_{2}}$, the expression of\ \ $G_{_{2}}$\ \ becomes\ \ $G$, \emph{i.e.}, the equation of surface is written as\ \ $G=0$, so, if we consider\ \ $(\widetilde{U},V_{_{1}},z)$-space as naturally embedded in\ \  $(\widetilde{U},V_{_{1}},X,Y,z)$-space defined in Section \ref{sec:wmhc}, we see that the surface introduced here is precisely the characteristic surface defined there, by\ \ $G=0$,\ $Y=0$.

Let us use the second  equation of the system
\begin{equation*}
\left\{\begin{array}{rl}
[(b_{_{1}}+1)u+a_{_{1}})]r_{_{1}}+(v+a_{_{2}})r_{_{2}}  &= \lambda r_{_{1}},\\
(a_{_{3}}+v)r_{_{1}}       + (a_{_{4}}+u)r_{_{2}} &= \lambda r_{_{2}}
\end{array} \right.
\end{equation*}
to define\ \ $\lambda$\ \ as a function of\ \ $u$,\ $v$\ \ and\ \ $z$, using that\ \ 
$z=r_{_{1}}/r_{_{2}}$. If we replace\ \ $u$\ \ and\ \ $v$\ \ by the new coordinates\ \  $\widetilde{U},V_{_{1}}$, we see that\ \  $\lambda$\ \  coincides with\ \ $s$\ \ defined in Section \ref{sec:wmhc} as a function of\ \ $(\widetilde{U},V_{_{1}},Y,z)$, when\ \ $z=0$, and the inflection locus defined there as the intersection of\ \ $Son$\ \ with\ \ $\mathcal{C}$\ \ can also be defined as the set of points where\ \ $\lambda$\ \ is critical along the rarefaction curves.

The curve defined in the characteristic surface by\ \ $G=0$\ \ and\ \ $\partial G/\partial z =0$\ \  is where the tangent plane to the characteristic surface is vertical, and projects onto the coincidence curve, so, it is also called \emph{coincidence curve}. It separates\ \ $\mathcal{C}$\ \ into two componentes, which are\ \ $\mathcal{C}_{_{s}}$\ \ and\ \ $\mathcal{C}_{_{f}}$\ \  defined before.

A complete study of rarefactions can be found in \cite{Palmeira88} and in \cite{Bastos05} and a formal definition of rarefactions in the wave manifold can be found in \cite{Isaacson92} and in \cite{Bastos05}.

Let us obtain the differential equation of the rarefaction curves in\ \ $t$\ \ and\ \ $z$\ \ coordinates. Replacing\ \ $u$\ \ and\ \ $v$\ \ by\ \ $\widetilde{U}$\ \ and\ \  $V_{_{1}}$, the equation\ \ $zdv-du=0$\ \ becomes\ \ $b_{_{1}}zdV_{_{1}}-d\widetilde{U}=0$. Take the expressions of\ \ $\widetilde{U}$\ \ and\ \  $V_{_{1}}$\ \ in terms of\ \ $t$\ \  and\ \  $z$,\  $\widetilde{U}=2cz/(z^{^{2}}+1) +ct(z^{^{2}}-1)$,\  
 $V_{_{1}}= c/(z^{^{2}}+1)+ctz$\ \ differentiate, and replace\ \ $d\widetilde{U}$\ \ and\ \ $dV_{_{1}}$\ \ by their expressions in\ \ $dt$\ \ and\ \ $dz$. Collecting terms, we arrive at 
\begin{equation}
\frac{dt}{dz}=\frac{-t(b_{_{1}}-2)z^5-2t(b_{_{1}}-2)z^{^{3}}+2(b_{_{1}}-1)z^{^{2}}-t(b_{_{1}}-2)z+2}{(z^{^{2}}+1)^{^{2}}(1+(b_{_{1}}-1)z^{^{2}})}.
\label{eq:edoraref}
\end{equation}

In this context it is important to orient the rarefaction curves according to the growth of\ \ $s$ (or, which is the same\ \ $\lambda$).

For the purpose of this paper we, will define a \emph{forward rarefaction} to be a rarefaction arc in\ \ $\mathcal{C}_{_{s}}$\ \ with increasing\ \ $s$. We will define a \emph{backward rarefaction} to be a rarefaction arc in\ \ $\mathcal{C}_{_{f}}$\ \ with decreasing\ \ $s$.

The expression of\ \ $s$\ \ in terms of\ \ $z$\ \ and\ \ $t$ (as we have seem in Section \ref{sec:wmhc}, $s$\ \ does not depend on\ \ $Y$) is given by equation \eqref{eq:speed}. The variation of\ \ $s$\ \ along a rarefaction curve is given by\ \ $ds/dz=\partial s/\partial z+(\partial s/\partial t) (dt/dz)$, where\ \ $dt/dz$\ \ is the differential equation \eqref{eq:edoraref}.

After simplification, we arrive at
$$\frac{ds}{dz}=\frac{z(z^{^{2}}+1)[(b_{_{1}}+1)z^{^{2}}+3]t+(b_{_{1}}-1)z^{^{2}}
+1}{[(b_{_{1}}-1)z^{^{2}}+1](z^{^{2}}+1)}.$$

Since the denominator is positive, we only need to look at the numerator, which is of the form\ \ $A(z)t+B(z)$. The equation\ \ $A(z)t+B(z)=0$\ \ defines the curve where\ \ $s$\ \ is critical along the rarefaction curves. This is exactly the inflection locus, defined in Section \ref{sec:wmhc} as the intersection of the sonic surface\ \ $Son$\ \ with the characteristic surface\ \ $\mathcal{C}$. Just put\ \ $Y=0$\ \ in the equation of\ \ $Son$.

In\ \ $z$,\ $t$\ \ coordinates, the inflection locus looks like the hyperbola\ \ $t=-1/z$. The derivative\ \ $ds/dz$ is clearly positive between the two branches and negative in the two other regions (``exterior'' to the two branches), so, a rarefaction curve will be oriented in the direction of increasing $z$ between the branches and in the direction of decreasing\ \ $z$\ \ out of the branches. As shown in figure ???? a rarefaction curve arrives at the inflection locus, in\ \ $\mathcal{C}_{_{s}}$ ($t<0$) and leaves the inflection locus in\ \ $\mathcal{C}_{_{f}}$ ($t>0$) { and this remains true even if the rarefaction curve is tangent to the inflection locus. As a consequence, at such a tangency point, the rarefaction curve necessarily crosses the inflection locus. As will be seen in Section \ref{subsec:WaveCurve}, there are two such points. We denote by\ \ $\mathcal {I}_{_{s}}$\ \ and\ \ $\mathcal {I}_{_{f}}$\ \ the branches of the inflection locus in\ \ $\mathcal{C}_{_{s}}$\ \ and\ \ $\mathcal{C}_{_{f}}.$

\subsection{Composites\label{subsec:Compos}}

Given a point\ \ $\mathcal{U}$\ \ in\ \ $Son’$, the Hugoniot curve through\ \  $\mathcal{U}$\ \ generically intersects\ \ $\mathcal{C}$\ \ in two points,  $\mathcal{U}_{_{s}}$\ \ and\ \ $\mathcal{U}_{_{f}}$\ \ such that either\ \ $s(\mathcal{U}_{_{s}})=s(\mathcal{U})$\ \ or\ \ $s(\mathcal{U}_{_{f}})=s(\mathcal{U})$. This allows us to define a projection\ \ $T$\ \ from\ \ $Son’$\ \ to\ \ $\mathcal{C}$\ \ which takes\ \ $\mathcal{U}$\ \ into\ \ $\mathcal{U}_{_{s}}$\ \ or\ \ $\mathcal{U}_{_{f}}$ such  that\ \ $s(T(\mathcal{U}))= s(\mathcal{U})$. The pullback of rarefaction curves from\ \ $\mathcal{C}$\ \ to\ \ $Son’$\ \ are called \emph{composite curves}. They are described in \cite{Eschenazi02}. 

We can obtain the differential equation of the composites curves in coordinates\ \ $z$\ \ and\ \ $Y$. As seen in Section \ref{sec:sonlifs}, given a point\ \ $(z_{_{1}},Y_{_{1}},t(z_{_{1}},Y_{_{1}}))$\ \ in\ \ $Son'$\ \ the map which associates the point in\ \ $\mathcal{C}$\ \ in the same Hugoniot  curve through\ \ $(z_{_{1}},Y_{_{1}},t(z_{_{1}},Y_{_{1}}))$\ \ and with the same value of the speed\ \ $s(z_{_{1}},Y_{_{1}},t(z_{_{1}},Y_{_{1}}))$, is given by\ \
$(z_{_{1}},Y_{_{1}}) \longrightarrow (z_{_{\mathcal{C}_{_{2}}}}, t_{_{\mathcal{C}_{_{2}}}})$, where\ \ $$z_{_{\mathcal{C}_{_{2}}}}=-\displaystyle\frac{2(Y_{_{1}}-c)z_{_{1}}}{2b_{_{1}}z_{_{1}}[(b_{_{1}}+1)z_{_{1}}^{^{2}}+3]}\ \ \ \ \ \text{and}\ \ \ \ \
t_{_{\mathcal{C}_{2}}}=\displaystyle\frac{(A_{_{t_{_{\mathcal{C}_{2}}}}}Y_{_{1}}+B_{_{t_{_{\mathcal{C}_{2}}}}})([(b_{_{1}}+1)z_{_{1}}^{^{2}}+1]Y_{_{1}}+2c)^{^{2}}}{2cz_{_{1}}[(b_{_{1}}+1)z_{_{1}}^{^{2}}+3](C_{_{t_{_{\mathcal{C}_{2}}}}}Y_{_{1}}^{^{2}}+D_{_{t_{_{\mathcal{C}_{2}}}}}Y_{_{1}}+E_{_{t_{_{\mathcal{C}_{2}}}}})},$$
where
\begin{eqnarray*}
	A_{_{t_{_{\mathcal{C}_{2}}}}}&=&(b_{_{1}}+1)z_{_{1}}^{^{4}}+2(b_{_{1}}+3)z_{_{1}}^{^{2}}+1,\\
	B_{_{t_{_{\mathcal{C}_{2}}}}}&=&-2c[(b_{_{1}}-1)z_{_{1}}^{^{2}}+1)],\\
	C_{_{t_{_{\mathcal{C}_{2}}}}}&=&Y_{_{1}}^{^{2}}[b_{_{1}}+1z_{_{1}}^{^{4}}+2(b_{_{1}}+3)z_{_{1}}^{^{2}}+1],\\
	D_{_{t_{_{\mathcal{C}_{2}}}}}&=&4cY_{_{1}}[(b_{_{1}}-1)z_{_{1}}^{^{2}}+1],\\
	E_{_{t_{_{\mathcal{C}_{2}}}}}&=&4cY_{_{1}}[(b_{_{1}}-1)z_{_{1}}^{^{2}}+1]+4c^{^{2}}(z_{_{1}}^{^{2}}+1).
\end{eqnarray*}           
To avoid confusion, we use\ \ $z_{_{1}}$\ \ and\ \  $Y_{_{1}}$\ \ as coordinates in\ \ $Son'$.
To get the composite differential equation, we write equation \eqref{eq:edoraref}  in the form\ \ $m(z,t)dz+n(z,t)dt=0$. Replacing\ \ $m(z,t)$\ \ by\ \ $m(z_{_{\mathcal{C}_{_{2}}}}(z_{_{1}},Y_{_{1}}),t_{_{\mathcal{C}_{_{2}}}}(z_{_{1}},Y_{_{1}})),$  
$n(z,t)$\ \ by\ \ $n(z_{_{\mathcal{C}_{_{2}}}}(z_{_{1}},Y_{_{1}}),t_{_{\mathcal{C}_{_{2}}}}(z_{_{1}},Y_{_{1}}))$,\ $dz$\ \ by\ \ $(\partial z_{_{\mathcal{C}_{_{2}}}}/\partial z_{_{1}})dz_{_{1}}+ (\partial z_{_{\mathcal{C}_{_{2}}}}/\partial Y_{_{1}})dY_{_{1}}$\ \ and\ \ $dt$\ \ by\ \ $(\partial t_{_{\mathcal{C}_{_{2}}}}/\partial z_{_{1}})dz_{_{1}}+ (\partial t_{_{\mathcal{C}_{_{2}}}}/\partial Y_{_{1}})dY_{_{1}}$,\ 
after simplification, we get the composite differential equation given by 
\begin{equation}
\frac{[(\mu_{_{2}}(z_{_{1}})Y_{_{1}}^{^{2}}+\mu_{_{1}}(z_{_{1}})Y_{_{1}} +\mu_{_{0}}(z_{_{1}})]}{z_{_{1}}((b_{_{1}}+1)z_{_{1}}^{^{2}}+3)}dz_{_{1}}+[\nu_{_{1}}(z_{_{1}})Y_{_{1}}+\nu_{_{0}}(z_{_{1}})]dY_{_{1}},
\label{eq:edocomp}
\end{equation}
 where
\begin{eqnarray*}
\mu_{_{2}}(z_{_{1}})&=&-(b_{_{1}}+1)^{^{4}}z_{_{1}}^{^{8}}-4(2b_{_{1}}+3)(b_{_{1}}+1)^{^{2}}z_{_{1}}^{^{6}}-2(b_{_{1}}+1)(13b_{_{1}}+1)z_{_{1}}^{^{4}}+12z_{_{1}}^{^{2}}+3,\\
\mu_{_{1}}(z_{_{1}})&=&4c[(b_{_{1}}+1)^{^{2}}z_{_{1}}^{^{6}}+(b_{_{1}}+1)(5b_{_{1}}-7)z_{_{1}}^{^{4}}+3(z_{_{1}}^{^{2}}+1)],\\
\mu_{_{0}}(z_{_{1}})&=&-12c^{^{2}}[(b_{_{1}}+1)z_{_{1}}^{^{2}}-1)][(b_{_{1}}-1)z_{_{1}}^{^{2}}+1],\\
\nu_{_{1}}(z_{_{1}})&=&-[(b_{_{1}}+1)^{^{3}}z_{_{1}}^{^{6}}+(b_{_{1}}+1)(7b_{_{1}}-1)z_{_{1}}^{^{4}}+(7b_{_{1}}-1)z_{_{1}}^{^{2}}+1],\\
\nu_{_{0}}(z_{_{1}})&=&2c [(b_{_{1}}+1)z_{_{1}}^{^{2}}-1][(b_{_{1}}-1)z_{_{1}}^{^{2}}+1].
\end{eqnarray*}

The equation \eqref{eq:edocomp} is singular at points\ \ $(z_{_{2}}=-1/\sqrt{b_{_{1}}+1},Y_{_{1}}=0)$\ \ and\ \ $(z_{_{1}}=1/\sqrt{b_{_{1}}+1},Y_{_{1}}=0)$, such points are, respectively, the intersection points of the double sonic locus\ \ $(z=z_{_{crit_{_{2}}}}, t=t_{_{2}},Y)$\ \ and\ \
$(z=z_{_{crit_{_{1}}}},t=t_{_{1}},Y))$\ \ with the Characteristic surface. The differential equation is not defined at\ \ $z_{_{1}}=0$. In fact, besides these two singularities there are two other ones at\ \ $z_{_{1}}=0$\ \ and\ \ $z_{_{1}}=\infty$, both of type saddle, see \cite{Eschenazi02}.

In the same way as for rarefaction curves, it is important to orient the composite curves according to the growth of\ \ $s$.

For the purpose of this paper, we will define \emph{local forward composite} to be a composite arc in\ \ $ Son'_{_{s}}$\ \ starting at\ \ $\mathcal {I}_{_{s}}$\ \ with decreasing\ \ $s$. We will define \emph{local backward composite} to be a composite arc in\ \ $Son'_{_{f}}$\ \ starting at\ \ $\mathcal {I}_{_{f}}$\ \ with increasing\ \ $s$.

\section{Wave Curves \label{subsec:WaveCurve}}

In this section, we construct wave curves in the wave manifold\ \ $\mathcal{W}$. A wave curve in\ \ $\mathcal{W}$\ \ is a succession of rarefaction, shock and composite waves (forward and backward) satisfying certain conditions. Our list is not complete, not all possibilities are described, we focus only wave curves used here. We follow Section 4 of \cite{AEMP10}.\\



\subsection{Local Wave Curves \label{sec:lwc}}



 
		


 We begin by constructing a local forward wave curve as follows:
start with a point\ \ $P_{_{0}}$\ \ in\ \ $\mathcal{C}_{_{s}}$. Let\ \ $s_{_{0}}$\ \ be the value of the speed at\ \ $P_{_{0}}$. The first arc is a forward rarefaction, contained in\ \ $\mathcal{C}_{_{s}}$, starting at\ \ $P_{_{0}}$, with speed increasing\ \ $s$. It is easy to see that there are only two possibilities: either the arc stops at the inflection locus ($s$\ \ stops increasing) or the arc stops at the coincidence locus (the arc leaves\ \ $\mathcal{C}_{_{s}}$). Let\ \ $P_{_{1}}$\ \ be the end point at this forward rarefaction arc. The second arc is a forward shock arc starting at\ \ $P_{_{0}}$, with decreasing speed\ \ $s$. Again there are two possibilities. Either the arc goes to infinity, or it stops at a point\ \ $P_{_{2}}$ in the sonic surface ($s$\ \ stops decreasing).

If\ \ $P_{_{1}}$\ \ is on the coincidence locus, the wave curve stops there. If there is no point\ \ $P_{_{2}}$, the wave curve is an infinite forward shock arc starting at\ \ $P_{_{0}}$, or $Sh_{P_{0\infty}}$, and the forward rarefaction from\ \ $P_{_{0}}$\ \ to\ \ $P_{_{1}}$,\ $R{P_{_{0}}P_{_{1}}}$.

If\ \ $P_{_{1}}$\ \ is on the inflection locus, we add a third arc: the forward composite arc starting at\ \ $P_{_{1}}$\ \ with  decreasing\ \ $s$, which will stop at a point\ \ $P_{_{3}}$\ \ on\ \ $Son'$, with speed\ \ $s_{_{3}}$, namely\ \ $Co_{P_{_{1}}P_{_{3}}}$. The latter is either at the double contact (double sonic locus), or\ \ $s_{_{3}}=s_{_{1}}$, whichever comes first.

In case\ \ $s_{_{3}}=s_{_{1}}$, we add the non-local forward shock arc starting at\ \ $P_{_{3}}$\ \ with decreasing\ \ $s$\ \  until\ \ $P_{_{6}}$, $NLSh_{P_{_{3}}P_{_{6}}}$. If\ \ $P_{_{3}}$\ \ is on the double contact, we consider the auxiliary Hugoniot$'$ arc starting at\ \ $P_{_{3}}$,\ $hug'_{P_{_{3}}P_{_{4}}}$. The arc will hit\ \ $\mathcal{C}_{_{s}}$\ \ at a point\ \ $P_{_{4}}$. From\ \ $P_{_{4}}$\ \ we take the forward rarefaction until\ \ $P_{_{5}}$, contained in\ \ $\mathcal{C}_{_{s}}$,\ $R_{P_{_{4}}P_{_{5}}}$ with increasing\ \ $s$.\\

\noindent
We may summarize as:


\noindent	
  $\bullet$ {\bf Case 1}- For\ \ $P_{_{1}}$\ \ on the coincidence locus. The wave curve is\ \ $Sh_{P_{0\infty}}\cup R_{P_{_{0}}P_{_{1}}}$. The speed\ \ $s$\ \  decreases along shock arc and increases along rarefaction arc.\medskip

	
\noindent		
$\bullet$ {\bf Case 2}- For\ \ $P_{_{1}}$\ \ on the inflection locus and\ \ $s_{_{3}}=s_{_{1}}$. The wave curve is\ \ $Sh_{P_{0\infty}}\cup R_{P_{_{0}}P_{_{1}}}\cup Co_{P_{_{1}}P_{_{3}}}\cup NLSh_{P_{_{3}}P_{_{6}}}$. The speed\ \ $s$\ \ decreases along shock and composite arcs and increases along the rarefaction arc.\medskip





\noindent
$\bullet$ {\bf Case 3}- For\ \ $P_{_{3}}$\ \ on the double contact. The wave curve is\ \ $Sh_{P_{0\infty}}\cup R_{P_{_{0}}P_{_{1}}}\cup Co_{P_{_{1}}P_{_{3}}}\cup hug'_{P_{_{3}}P_{_{4}}}\cup R_{P_{_{4}}P_{_{5}}}$. The speed\ \ $s$\ \  decreases along the shock and composite arcs and increases along the rarefaction arcs.\\


\noindent
We have 3 more cases, replacing infinity by a point\ \ $P_{_{2}}$\ \ on\ \ $Son$.



We define a local backward wave curves from a point\ \ $P_{_{0}}\in \mathcal{C}_{_{f}}$\ \ as a succession of a backward rarefaction, a local backward shock and a local backward composite. We can also describe similar wave curves for backward arcs.

We remark that wave curves in the wave manifold are not smooth objects, in case 3 they are not even continuous. The intermediate surfaces, introduced in \cite{AEMP10}, are the better correspondent in the wave manifold for the wave curves in the state space.\\

\subsection{Decomposing $C_{_{s}}$ according to wave curves structures}




 To obtain local forward wave curves we need to divide\ \ $\mathcal{C}_{_{s}}$\ \ into regions such that wave curves starting at points of each region have similar properties.

We know that along rarefaction curves in\ \ $\mathcal{C}_{_{s}}$\ \ the speed\ \ $s$\ \ increases towards\ \ $\mathcal{I}_{_{s}}$, \emph{i.e.}, for a given state in\ \ $(z,t)\in \mathcal{C}_{_{s}}$:\medskip

	
\noindent
\textbf{1.-} If\ \ $(z,t)$\ \ lies on the left side of\ \ $\mathcal{I}_{_{s}}$, then the forward rarefaction through it, is constructed with increasing values of\ \ $z$.
	 
\noindent
\textbf{2.-} If\ \ $(z,t)$\ \ lies on the right side of\ \ $\mathcal{I}_{_{s}}$, then the forward rarefaction through it, is constructed with decreasing values of\ \ $z$.\medskip

In our model, there are states\ \ $(z,t)$\ \ for which the forward rarefaction through these points reaches\ \ $\mathcal{I}_{_{s}}$, as well as there are states for which the forward rarefaction through it,  does not reach\ \ $\mathcal{I}_{_{s}}$. In the second case, the forward rarefaction stops at the coincidence curve, the boundary of\ \ $C_{_{s}}$\ \ and\ \ $C_{_{f}}$. So, it is necessary to decompose\ \
$\mathcal{C}_{_{s}}$\ \ in regions for which these different behaviors occur.

To obtain this decomposition, we look for points\ \ $(z,t)$\ \ for which a rarefaction curve and the inflection curve\ \ $\mathcal{I}$\ \ are tangent.
Differentiating the equation of the inflection curve\ \ $\eqref{eqinfle}$\ \ with respect to\ \ $z$, we get:

\begin{equation}
\frac{dt}{dz}=\frac{3z^{^{2}}b_{_{1}}+b_{_{1}}+4}{(z^{^{2}}+1)^{^{2}}(3+(b_{_{1}}+1)z^{^{2}})}.\label{difto}
\end{equation}
Now, equating $(\ref{difto})$ to the rarefaction field given by $(\ref{eq:edoraref})$, with\ \ $t$\ \ given by\ \ $(\ref{eqinfle})$,
 we get that the\ \ $z$\ \  coordinates of the tangency points are the roots of:

\begin{equation}
	\frac{-3+(3b_{_{1}}+3)z^{^{2}}}{(3+(b_{_{1}}+1)z^{^{2}})^{^{2}}z^{^{2}}}=0,
	\label{eqdiff}
\end{equation}
  $z_{_{1}}=1/\sqrt{b_{_{1}}+1}$\ \ and\ \  $z_{_{2}}=-1/\sqrt{b_{_{1}}+1}$, \emph{i.e.}, the rarefaction curve and the inflection curve, $\mathcal{I}$, are tangent at the intersection points of $\mathcal{C}$ with the double contact,\ $(z_{_{crit_{_{1}}}}, t_{_{1}},Y)$\ \ and\ \ $(z_{_{crit_{_{2}}}},t_{_{2}},Y)$. 

We have proved the following:


\begin{proposition}
 The rarefaction and inflection curves are tangent at the states\ \  $(z_{_{crit_{_{1}}}},t_{_{1}})\in \mathcal{C}_{_{s}}$\ \ and\ \   $(z_{_{crit_{_{2}}}},t_{_{2}})\in \mathcal{C}_{_{f}}$.
\end{proposition}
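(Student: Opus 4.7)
The plan is to directly verify the tangency by equating the two slopes $dt/dz$ — one computed along the inflection locus $\mathcal{I}$ and the other given by the rarefaction field — and solving for $z$. First, I would differentiate the inflection locus formula (\ref{eqinfle}) with respect to $z$ to obtain the slope of $\mathcal{I}$ as a rational function of $z$ alone; this is exactly equation (\ref{difto}), which the paper has already recorded.

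Next, I would evaluate the rarefaction ODE (\ref{eq:edoraref}) at points of $\mathcal{I}$ by substituting the expression for $t$ from (\ref{eqinfle}) into its right-hand side. This produces the rarefaction slope at each point of $\mathcal{I}$, again as a rational function of $z$ alone. Equating this with (\ref{difto}) and clearing denominators yields a polynomial equation in $z$; after cancellation of the nonvanishing factor $z^{2}(3+(b_{_{1}}+1)z^{2})^{2}$, the tangency condition reduces to (\ref{eqdiff}), namely $-3+3(b_{_{1}}+1)z^{2}=0$. Solving gives exactly $z=\pm 1/\sqrt{b_{_{1}}+1}$, which are the $z$-coordinates $z_{_{crit_{_{1}}}}$ and $z_{_{crit_{_{2}}}}$ from (\ref{doublecon}).

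Finally, I would substitute $z=\pm 1/\sqrt{b_{_{1}}+1}$ back into (\ref{eqinfle}) to compute the corresponding $t$-coordinates and check that they coincide with $t_{_{1}}=-b_{_{1}}\sqrt{b_{_{1}}+1}/[2(b_{_{1}}+2)]$ and $t_{_{2}}=b_{_{1}}\sqrt{b_{_{1}}+1}/[2(b_{_{1}}+2)]$ respectively, as listed right after (\ref{doublecon}). Since $t_{_{1}}<0$, the first tangency point $(z_{_{crit_{_{1}}}},t_{_{1}})$ lies in the region $t<0$, which is $\mathcal{C}_{_{s}}$; since $t_{_{2}}>0$, the second tangency point $(z_{_{crit_{_{2}}}},t_{_{2}})$ lies in $\mathcal{C}_{_{f}}$, as claimed.

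The entire argument is algebraic and the expected obstacle is purely bookkeeping: after the substitution of $t(z)$ from the inflection locus into the long rational right-hand side of (\ref{eq:edoraref}), one must identify and cancel the correct common factors so that the residual equation reduces cleanly to (\ref{eqdiff}). No conceptual difficulty arises, and the coincidence of the roots with the double sonic locus projections is automatic once the cancellation is carried out.
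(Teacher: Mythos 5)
Your proposal follows exactly the paper's own argument: differentiate (\ref{eqinfle}) to obtain (\ref{difto}), substitute the inflection-locus expression for $t$ into the rarefaction field (\ref{eq:edoraref}), equate the two slopes, and reduce to (\ref{eqdiff}), whose roots are precisely $z_{_{crit_{_{1}}}}$ and $z_{_{crit_{_{2}}}}$. Your final step of substituting back to recover $t_{_{1}}$, $t_{_{2}}$ and checking their signs to place the points in $\mathcal{C}_{_{s}}$ and $\mathcal{C}_{_{f}}$ is a sound completion of what the paper simply asserts by identifying these points with the double-contact computation of Section \ref{sec:waveman}.
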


\noindent
 Rarefaction curves through\ \ $(z_{crit_{_{1}}}, t_{_{1}})$\ \ and\ \ $(z_{crit_{_{2}}}, t_{_{2}})$\ \ subdivide\ \ $\mathcal{C}_{_{s}}$\ \ into three regions:

The rarefaction curve through\ \ $(z_{_{crit_{_{2}}}},t_{_{2}})$\ \ with\ \ $z\leq z_{_{crit_{_{2}}}}$\ \ is denoted by\ \ $\mathcal{R}_{_{f}}$. This curve reaches\ \ $t=0$\ \ at a point\ \ $\hat{z}$. 
Let\ \ $L_{_{1}}$\ \ and\ \ $L_{_{2}}$\ \ be the straight lines\ \ $(z,0)$\ \ with\ \ $z\leq \hat{z}$\ \ and\ \ $z>\hat{z}$, respectively. The part of\ \ $\mathcal{R}_{_{f}}$\ \ for\ \ $z\leq \hat{z}$\ \ is denoted as\ \ $\mathcal{R}_{_{fs}}$. 
The region\ \ $I_{_{a}}$\ \ is formed by states\ \ $(z,t)$\ \ between\ \ $L_{_{1}}$\ \ and\ \ $\mathcal{R}_{_{fs}}$. The region\ \ $I_{_{b}}$\ \ is formed by states\ \ $(z,t)$\ \ between\ \ $\mathcal{R}_{_{fs}}\cup L_{_{2}}$\ \ and\ \ $\mathcal{R}_{_{s}} \cup \mathcal{I}_{_{s}}$ ($z>z_{_{crit_{_{1}}}}$). We define the region\ \ $I$\ \ as\ \  $I_{_{a}}\cup I_{_{b}}$. The region\ \ $II$\ \ is formed by states\ \ $(z,t)$ below\ \ $\mathcal{R}_{_{s}}$\ \ and on the left side of\ \ $\mathcal{I}_{_{s}}$($z<z_{_{crit_{_{1}}}}$). The region\ \ $III$\ \ is formed by states\ \ $(z,t)$\ \ on the right side of\ \ $\mathcal{I}_{_{s}}$, see Fig. \ref{inflecfig}.

\newpage


\begin{figure}[htpb]
	\begin{center}	\includegraphics[scale=1.0,width=0.8\linewidth]{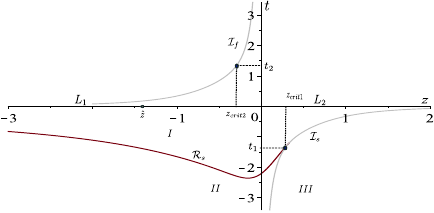}
		\caption[]{The three regions defined in\ \ $\mathcal{C}_{_{s}}$. The branch of inflection in\ \ $\mathcal{C}_{_{s}}$\ \ is denoted by\ \ $\mathcal{I}_{_{s}}$\ \ and the branch of inflection in\ \ $\mathcal{C}_{_{f}}$\ \ is denoted by\ \ $\mathcal{I}_{_{f}}$. \label{inflecfig}} 
	\end{center}
\end{figure}


\section{Riemann Solution \label{sec:RiemannSol}}

The Riemann solution is constructed by using the wave curves defined in the Section \ref{subsec:WaveCurve}. For a pair\ \ $(W_{_{L}},W_{_{R}})$\ \ given in the hyperbolic region, we utilize the following algorithm to obtain the solution.\\

\noindent
\textbf{Algorithm RS:}\\

\noindent
$\bullet$ {\it For a pair\ \ $(W_{_{L}},W_{_{R}})$\ \ both given in hyperbolic region  in the phase space, by substituting\ \ $W_{_{L}}=(u_{_{L}},v_{_{L}})$\ \ into\ \  $(\ref{eq:parahug})$\ \ and setting\ \ $Y=0$, we obtain two states in the wave manifold\ \ $\mathcal{W}$: $\mathcal{U}_{_{L}}=(z_{_{1}},t_{_{1}}<0,0)_{_{L}}\in\mathcal{C}_{_{s}}$\ \ and\ \ $\displaystyle{\tilde{\mathcal{U}}_{_{L}}=(z_{_{2}},t_{_{2}}>0,0)_{_{L}}\in\mathcal{C}_{_{f}}}$}.\medskip

\noindent
$\bullet$ {\it Similarly, by substituting\ \ $W_{_{R}}=(u_{_{R}},v_{_{R}})$\ \ into\ \  $(\ref{eq:parahug})$\ \ and setting\ \ $Y=0$, we obtain two states in the wave manifold\ \ $\mathcal{W}$: $\tilde{\mathcal{U}}_{_{R}}=(z_{_{1}},t_{_{1}}<0,0)_{_{R}}\in\mathcal{C}_{_{s}}$\ \ and\ \ ${\mathcal{U}}_{_{R}}=(z_{_{2}},t_{_{2}}>0,0)_{_{R}}\in\mathcal{C}_{_{f}}$}.\medskip

\noindent
$\bullet$ {\it To construct the solution, we utilize the states\ \ $\mathcal{U}_{_{L}}$\ \ and\ \ $\mathcal{U}_{_{R}}$}.\medskip

\noindent
$\bullet$ {\it From\ \ $\mathcal{U}_{_{L}}$\ \ we draw the waves of 1-family, described in Subsection \ref{subsec:WaveCurve}}.\medskip

\noindent
$\bullet$ {\it After we constructed the 1-family, we saturated the curves as described in Subsection \ref{saturatedsurface}}.\medskip

\noindent
$\bullet$ {\it From the state\ \ $\mathcal{U}_{_{R}}$, we construct the wave of 2-reverse wave sequence, described in Section $\ref{2reverse}$}.\medskip

\noindent
$\bullet$ {\it The Riemann solution consists of in the sequence of waves and states obtained through the intersection between the saturated surface through\ \ $\mathcal{U}_{_{L}}$\ \ and the 2-reverse wave   through\ \ $\mathcal{U}_{_{L}}$}.\medskip

\noindent
$\bullet$ {\it After, we obtain the solution on the plane\ \ $uv$}.\medskip

\begin{remark}
\label{remarkdef}
The Riemann solution is a sequence of shocks, rarefactions and constant states. 
In the wave manifold\ \ $\mathcal{W}$, we denote:\\

\noindent
$\bullet$ Hugoniot curve arcs:\ \ $\mathcal{H}_{_{1}}$\ \ for curves starting at\ \ $t<0$\ \ and\ \ $\mathcal{H}_{_{2}}$\ \ starting at\ \ $t>0$. \medskip

\noindent
$\bullet$ Rarefactions curves:\ \  $\mathcal{R}_{_{1}}$\ \ for curves starting at\ \ $t<0$\ \ and\ \
$\mathcal{H}_{_{2}}$\ \ starting at\ \ $t>0$. \medskip

\noindent
$\bullet$ Composite curves:\ \  $\mathcal{C}_{_{1}}$\ \ for curves starting at\ \ $t<0$\ \ and\ \ $\mathcal{C}_{_{2}}$\ \ starting at\ \ $t>0$.\\

Given a state\ \ $\mathcal{U}\in\mathcal{W}$, if the wave through this state\ \ $z$\ \ is positive, then identify this wave on right of\ \ $\mathcal{U}$. On the other hand, if\ \ $z$\ \ is negative the wave is identified on left of\ \ $\mathcal{U}$. For instance, from state\ \ $\mathcal{U}$\ \ we have a\ \ $\mathcal{R}_{_{1}}$\ \ followed by\ \ $\mathcal{C}_{_{1}}$\ \ for increasing\ \ $z$\ \ and\ \ $\mathcal{H}_{_{1}}$\ \ for decreasing\ \ $z$, we denote this sequence through\ \ $\mathcal{U}$\ \ as\ \ $\mathcal{H}_{_{1}}\mathcal{U}\mathcal{R}_{_{1}}\mathcal{C}_{_{1}}$. The saturated surfaces follow the same sequence of waves.   
\end{remark}

\begin{remark}
It is necessary to verify that the wave sequence satisfies the geometrical compatibility, \emph{i.e.}, the waves in the sequence have increasing speed.
\end{remark}

In the following sections, we divide the\ \ $\mathcal{C}_{_{s}}$\ \ into different parts for which the solution is equal in each region. With these structures, we can obtain the Riemann solution.

\subsection{The wave curves in each region}
\label{subsec:sectwave}\medskip

In this section, we describe the construction of local forward wave curves in each of the four regions
\ \ $I$\ \ to\ \ $III$, in\ \ $\mathcal{M}-\Pi$, the interpretation of such wave curve in the\ \ $(u,v)$-plane will be seen in Subsection \ref{subsec:secwaveuv}.\\ 


\noindent
We consider a state\ \ $(z_{_{i}},t_{_{i}})\in I$, we refer to Fig. $\ref{secondcase}$.\\

\noindent
$\bullet$ \textit{Rarefaction}: In this case a rarefaction curve does not cross the inflection\ \
$\mathcal{I}$ (neither\ \ $\mathcal{I}_{_{s}}$\ \ nor\ \ $\mathcal{I}_{_{f}}$). We construct the forward rarefaction\ \ $\mathcal{R}_{_{1}}$\ \ starting at\ \ $(z_{_{i}},t_{_{i}})$.\ \ $\mathcal{R}_{_{1}}$\ \ to
\ \ $L_{_{1}}$\ \ at\ \ $(\tilde{z},0)$. We stop the construction of rarefaction because from this point we will have a rarefaction in\ \ $\mathcal{C}_{_{f}}$.\medskip
	 
	
	
\noindent
$\bullet$ \textit{Hugoniot}: From\ \ $(z_{_{i}},t_{_{i}})$\ \ we construct the local forward shock arc\ \
$\mathcal{H}_{_{1}}$, with decreasing\ \ $s$ (decreasing values of\ \ $z$).

\begin{figure}[htpb]
	\begin{center}	\includegraphics[scale=0.4,width=0.4\linewidth]{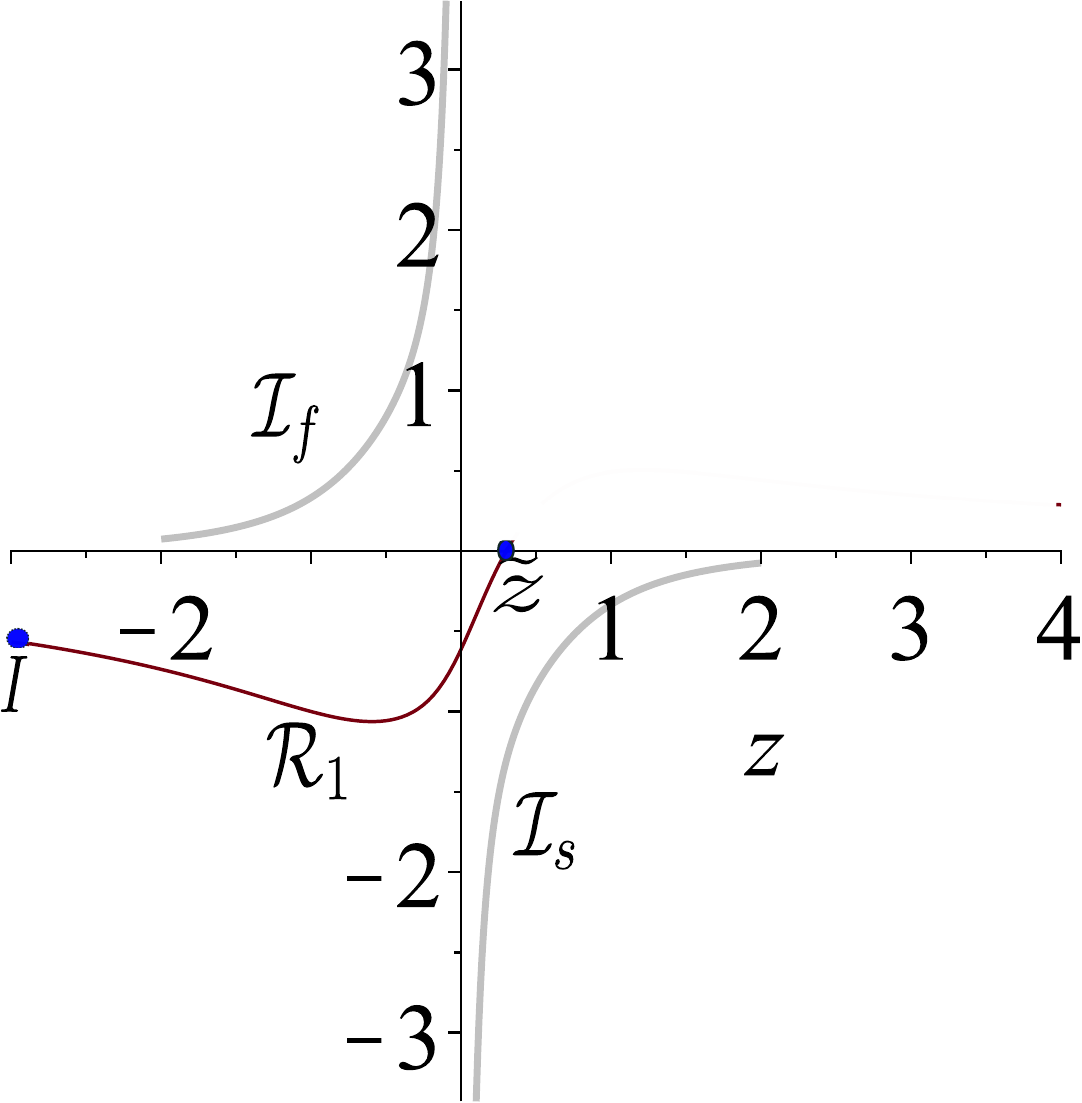}\hspace{1cm}	\includegraphics[scale=0.4,width=0.4\linewidth]{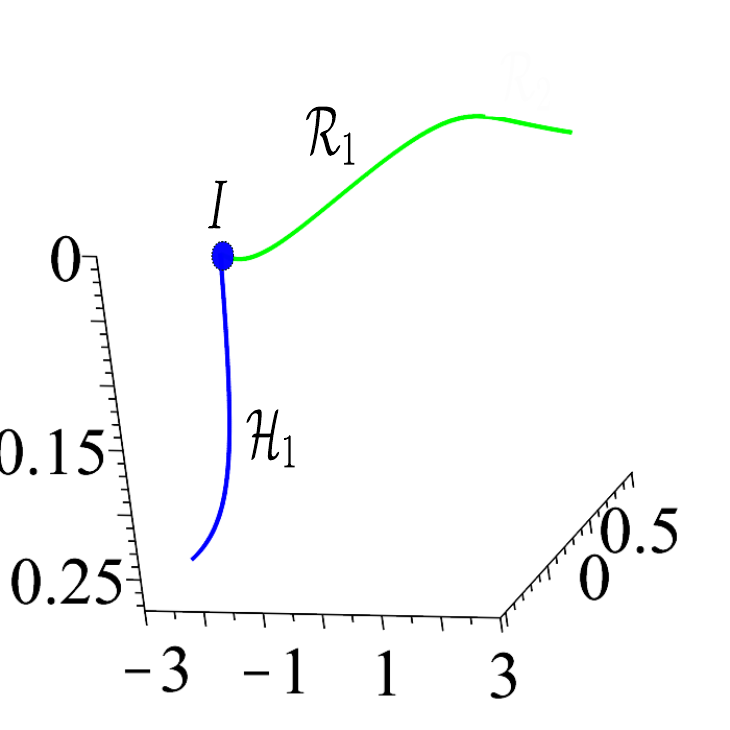}	
		\caption[]{Waves sequence for initial data in region $II$. \textit{Left a:)-} The rarefaction $\mathcal{R}_{_{1}}$ in the characteristic plane $\mathcal{C}$.  \textit{Right b:)-} The wave sequence $\mathcal{H}_{_{1}}$,  $\mathcal{R}_{_{1}}$,   in $\mathcal{W}-\Pi$ . \label{secondcase}} 
	\end{center}
\end{figure}

\newpage

\noindent
 For a state $(z_{_{i}},t_{_{i}})\in II$, we refer to Fig. $\ref{thirdcase}$.\\
 
\noindent
$\bullet$ \textit{Rarefaction:} From\ \ $(z_{_{i}},t_{_{i}})$\ \ we construct the forward rarefaction
\ \ $\mathcal{R}_{_{1}}$. This rarefaction intersects the branch\ \ $\mathcal{I}_{_{s}}$\ \ of the inflection curve at\ \ $(z^\dag,t^\dag)$,\ $z^\dag<z_{_{crit_{_{1}}}}$.\medskip
	
\noindent
$\bullet$ \textit{Composite:}  From\ \ $({z}^\dag,{t}^\dag)$\ \ we construct the forward composite\ \
$\mathcal{C}_{_{1}}$\ \ parameterized by\ \ $z$,\  $z^\dag \leq z \leq z_{_{crit_{_{1}}}}$.\medskip 
	
\noindent
$\bullet$ \textit{Hugoniot:} The forward shock curve arc\ \ $\mathcal{H}_{_{1}}$\ \ is constructed for decreasing\ \ $s$ (decreasing values of\ \ $z$).

\begin{figure}[htpb]
	\begin{center}	\includegraphics[scale=0.4,width=0.4\linewidth]{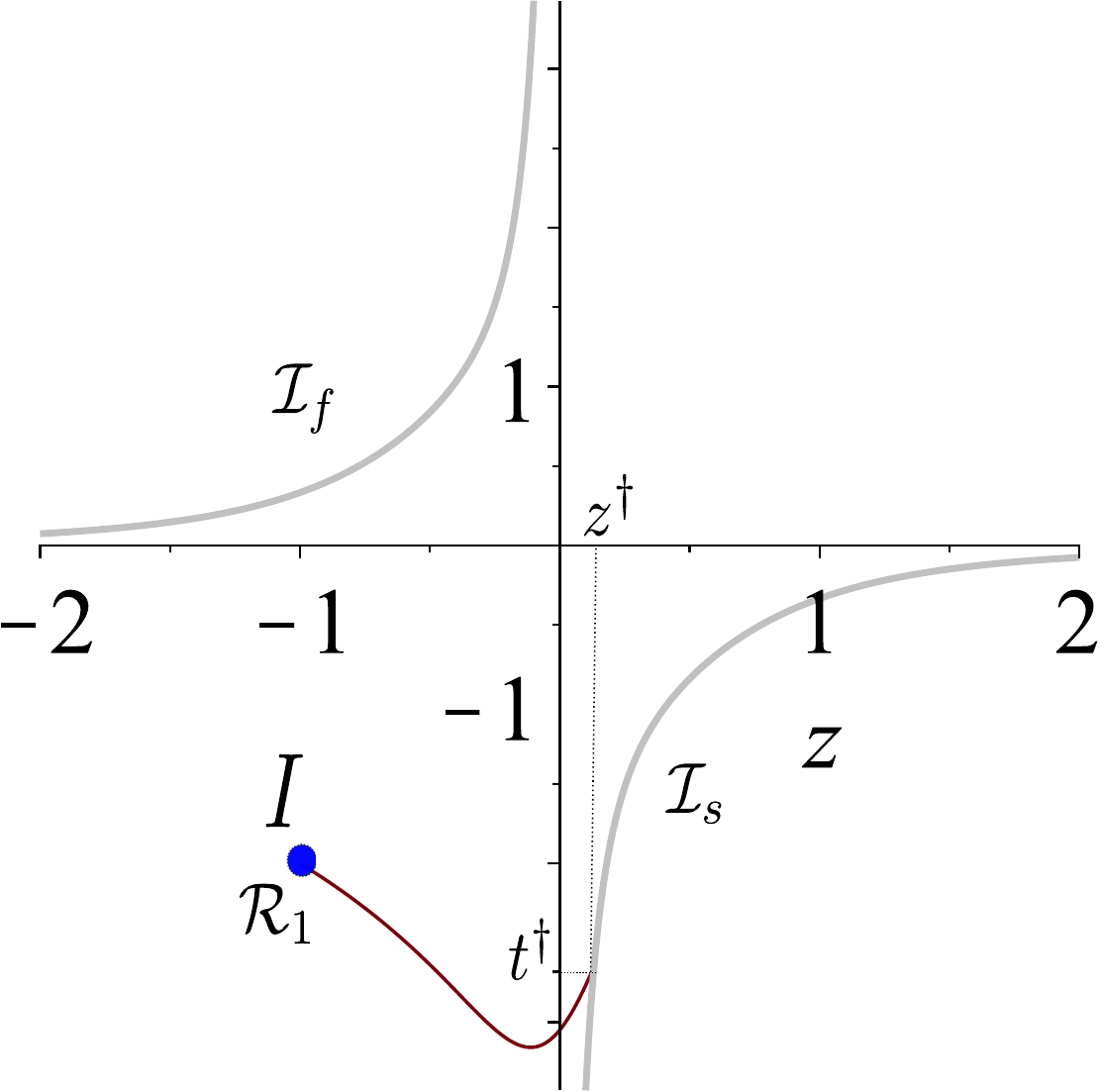}\hspace{1cm}	\includegraphics[scale=0.4,width=0.4\linewidth]{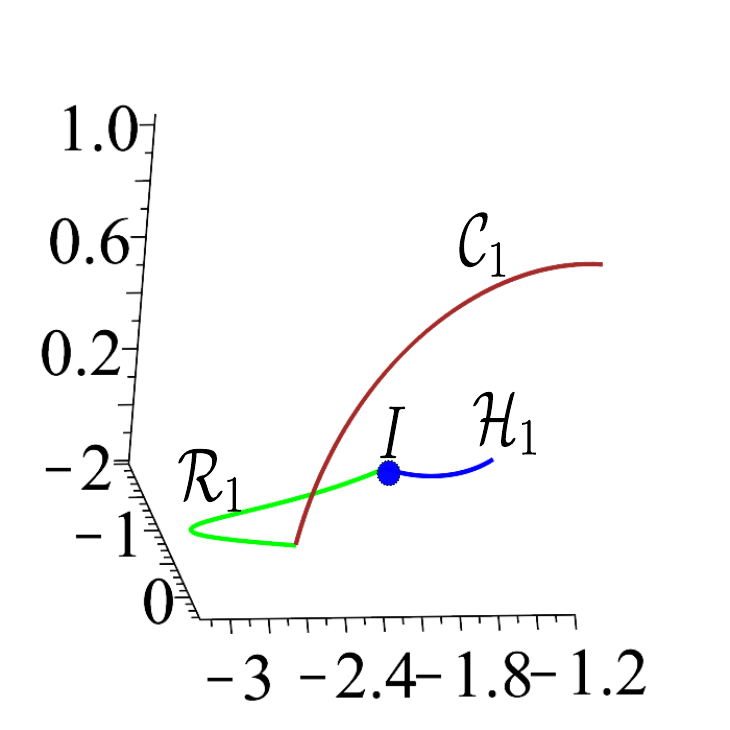}	
		\caption[]{{Waves sequence for initial data in region $II$. \textit{Left a:)-} The rarefactions $\mathcal{R}_{_{1}}$ in $\mathcal{C}$ reaches the branch of inflection $\mathcal{I}_{_{s}}$.  \textit{Right b:)-} The wave sequence $\mathcal{H}_{_{1}}$,  $\mathcal{R}_{_{1}}$ and $\mathcal{C}_{_{1}}$ in $\mathcal{W}-\Pi$ . \label{thirdcase}}} 
	\end{center}
\end{figure}

\noindent
For a state $(z_{_{i}},t_{_{i}})\in III$, we refer to Fig. $\ref{fourthcase}$.\\

\noindent
$\bullet$  \textit{Rarefaction:} From\ \ $(z_{_{i}},t_{_{i}})$, we construct the forward rarefaction\ \
$\mathcal{R}_{_{1}}$\ \ up to\ \ $({z}^\dag,{t}^\dag)$, the intersection\ \ $\mathcal{R}_{_{1}}  \cap \mathcal{I}_{_{s}}$,\ $z_{_{crit_{_{2}}}}<z^\dag<z_{_{crit_{_{1}}}}$.\medskip
	
\noindent
$\bullet$  \textit{Composite:} From\ \ $({z}^\dag,{t}^\dag)$\ \  we construct the forward composite
\ \ $\mathcal{C}_{_{1}}$\ \ parameterized by\ \ $z$,\  $z^\dag \leq z \leq z_{_{crit_{_{1}}}}$.\medskip
	
\noindent
$\bullet$   \textit{Hugoniot:} From\ \ $(z_{_{i}},t_{_{i}})$\ \ we construct the forward shock curve arc
\ \ $\mathcal{H}_{_{1}}$\ \ for decreasing\ \ $s$ (increasing values of\ \ $z$).

\begin{figure}[htpb]
	\begin{center}	\includegraphics[scale=0.4,width=0.4\linewidth]{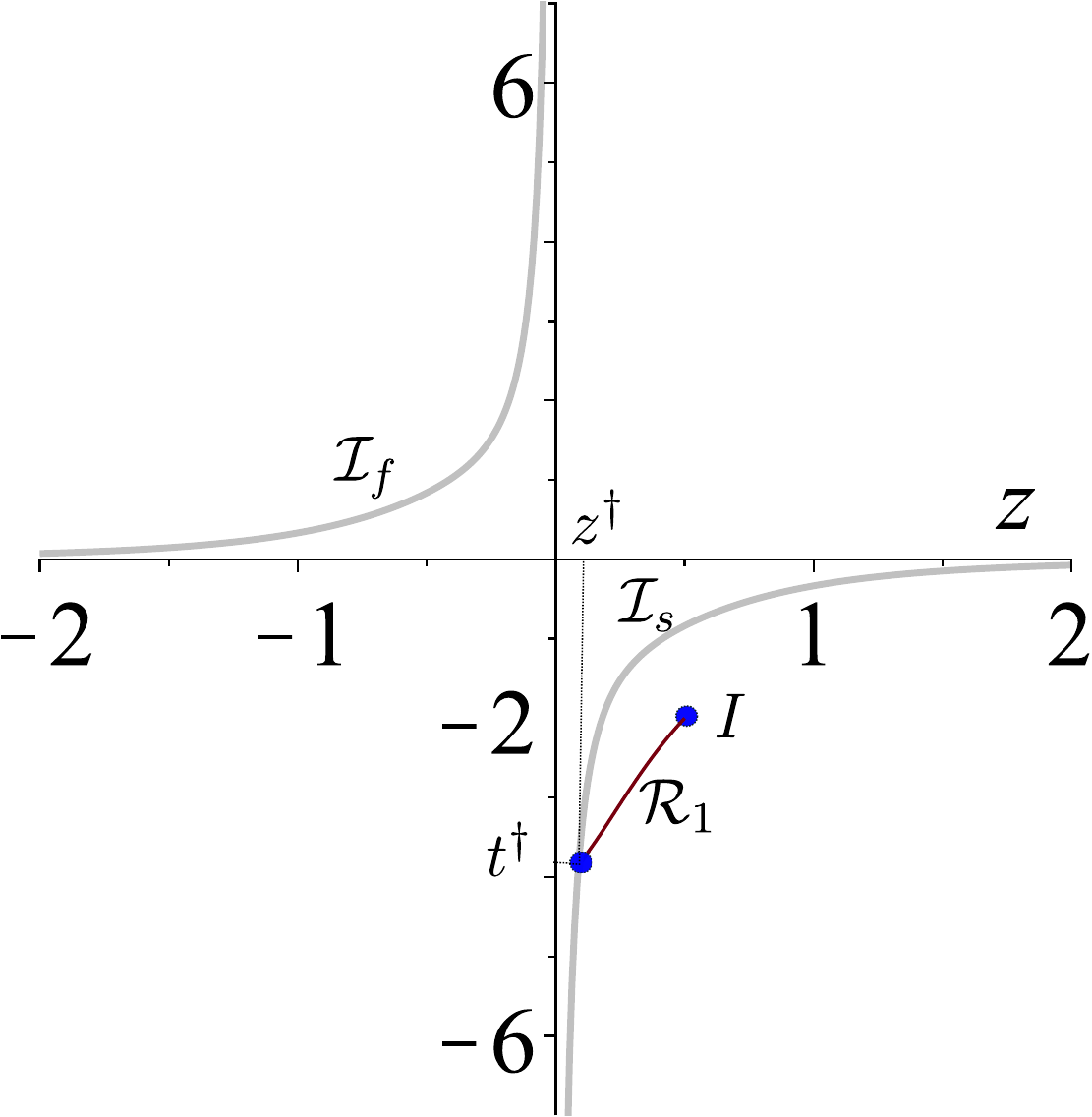}\hspace{1cm}	\includegraphics[scale=0.4,width=0.4\linewidth]{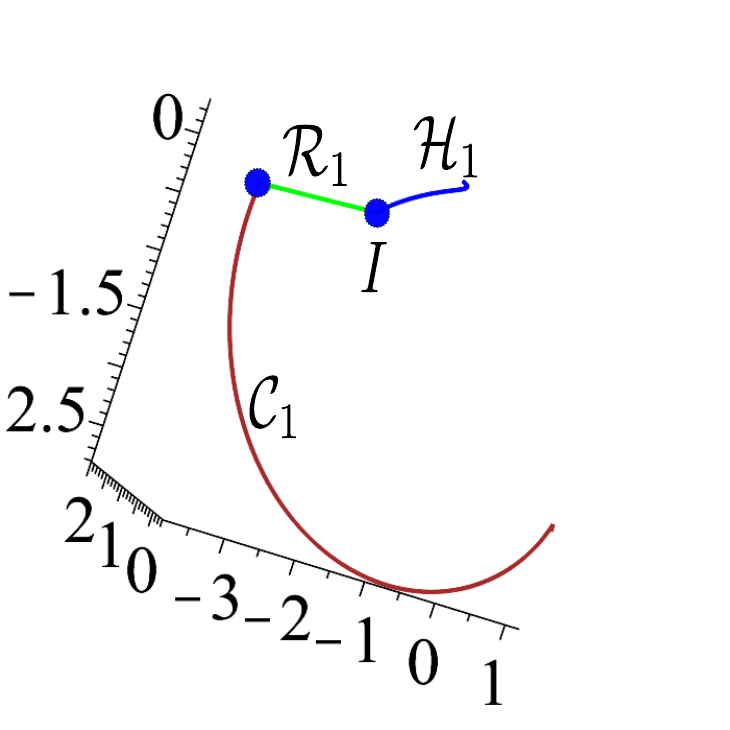}	
		\caption[]{Waves sequence for initial data in region\ \ $III$. \textit{Left a:)-} The 
		rarefactions\ \ $\mathcal{R}_{_{1}}$\ \ in\ \ $\mathcal{C}$\ \ reaches the branch of inflection
		\ \ $\mathcal{I}_{_{s}}$\ \ for decreasing\ \ $z$.  \textit{Right b:)-} The wave sequence\ \
		$\mathcal{H}_{_{1}}$,\  $\mathcal{R}_{_{1}}$\ \ and\ \ $\mathcal{C}_{_{1}}$\ \ in\ \ $\mathcal{W}-\Pi$. \label{fourthcase}} 
	\end{center}
\end{figure}

\subsection{The wave curves on the plane uv}
\label{subsec:secwaveuv}

 The construction of the local forward wave curve in\ \ $\mathcal{W}-\Pi$\ \ allow us to obtain the sequence of the 1-waves curves in the original state space, the\ \ $uv$-plane.

\begin{remark}
	The meaning of colors used in this paper by software TORS to represent rarefactions, shocks, composites etc... do not have the same meaning of the colors used by the software ELI.
\end{remark}

From the wave sequence defined in Subsection \ref{subsec:sectwave}, we utilize the mapping between  the wave manifold\ \ $\mathcal{W}-\Pi$\ \ and the plane\ \ $uv$\ \ given by the equations $(\ref{u})$ -- $(\ref{v})$.


There is a mapping between the elliptic boundary and the\ \ $SCC$. An interesting feature is that the straight line\ \ $(z,t=0)$\ \ in the\ \ $\mathcal{W}$\ \ represents the elliptic boundary on the plane\ \ $uv$.

Here, we obtain the same sequence of waves in each region\ \ $I$\ \ to\ \ $III$ (of wave manifold\ \
$\mathcal{W}$\ \ in\ \ $\mathcal{C}_{_{s}}$)  on the plane\ \ $uv$.

For\ \ $I=(z_{_{i}},t_{_{i}})\in I$, the wave sequence is as in Fig. $\ref{doistrescaseuv}.a$. 
There is a shock curve\ \ $\mathcal{S}_{_{1}}$\ \ and a rarefaction\ \ $\mathcal{R}_{_{1}}$. Since in region\ \ $I$\ \ the rarefaction crosses from slow region to fast region (changing from slow rarefaction to fast rarefaction)  the rarefaction\ \ $\mathcal{R}_{_{1}}$\ \ touches elliptic boundary and then the rarefaction changes to\ \ $\mathcal{R}_{_{2}}$. Since the rarefaction\ \ $\mathcal{R}_{_{2}}$\ \ does not touch the inflection\ \ $\mathcal{I}_{_{f}}$, there is no composite wave here,   see Fig. \ref{doistrescaseuv}.{\it a}.

For\ \ $I=(z_{_{i}},t_{_{i}})\in II$, the wave sequence is as in Fig. \ref{doistrescaseuv}.{\it b}. 
There are a shock curve\ \ $\mathcal{S}_{_{1}}$\ \ and a rarefaction\ \ $\mathcal{R}_{_{1}}$. The rarefaction\ \ $\mathcal{R}_{_{1}}$\ \ is drawn to the inflection\ \ $\mathcal{I}_{_{s}}$. From\ \ $\mathcal{I}_{_{s}}$\ \ we draw the composite wave\ \ $\mathcal{C}_{_{1}}$ to the straight line that represents the double contact, see Fig. \ref{doistrescaseuv}.{\it a}

\begin{figure}[htpb]
	\begin{center}	\includegraphics[scale=0.4,width=0.4\linewidth]{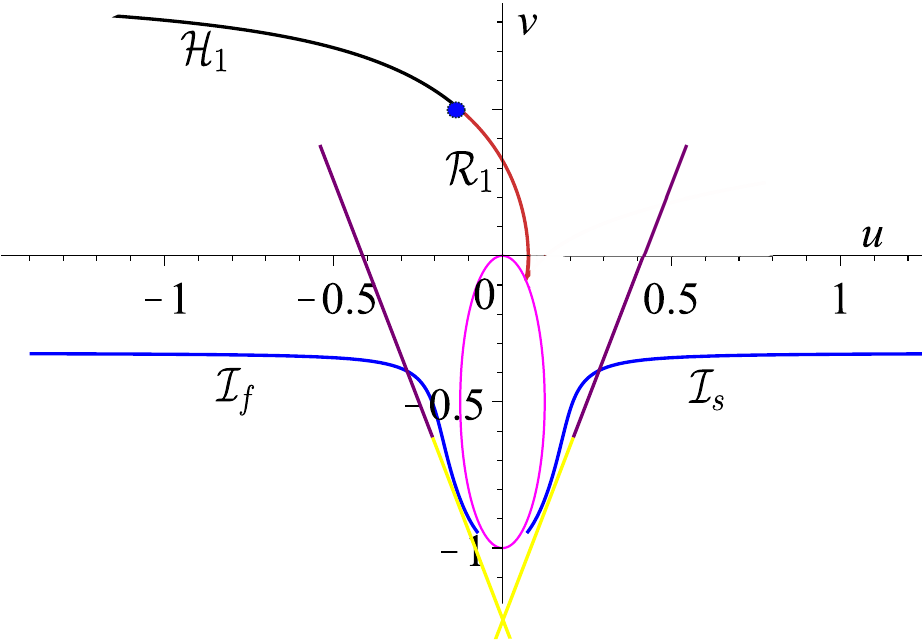}\hspace{1cm}	\includegraphics[scale=0.4,width=0.4\linewidth]{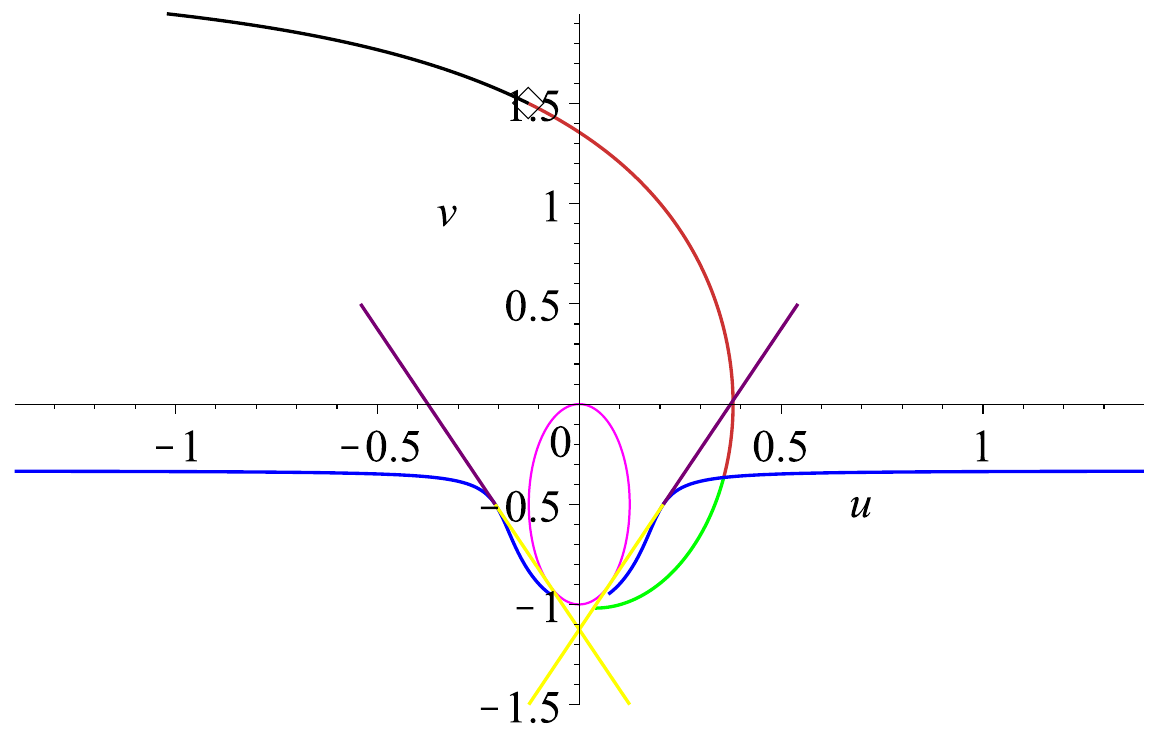}	
		\caption[]{\textit{ Left a:)} The wave sequence for\ \ $I=(z_{_{i}},t_{_{i}})\in I$\ \ on the plane\ \ $uv$. \textit{ Right b:) The wave sequence for\ \ $I=(z_{_{i}},t_{_{i}})\in III$\ \ on the plane\ \ $uv$.} \label{doistrescaseuv}} 
	\end{center}
\end{figure}

For\ \ $I=(z_{_{i}},t_{_{i}})\in III$, the wave sequence is as in Fig. \ref{quatrocaseuv}. 
There are a shock curve\ \ $\mathcal{S}_{_{1}}$\ \ and a rarefaction\ \ $\mathcal{R}_{_{1}}$. The rarefaction\ \ $\mathcal{R}_{_{1}}$\ \ is drawn to the inflection\ \ $\mathcal{I}_{_{s}}$. From\ \
$\mathcal{I}_{_{s}}$\ \ we draw the composite wave\ \ $\mathcal{C}_{_{1}}$\ \ to the straight line that represents the double contact, see Fig. \ref{quatrocaseuv}.

\begin{figure}[htpb]
	\begin{center}	\includegraphics[scale=0.4,width=0.4\linewidth]{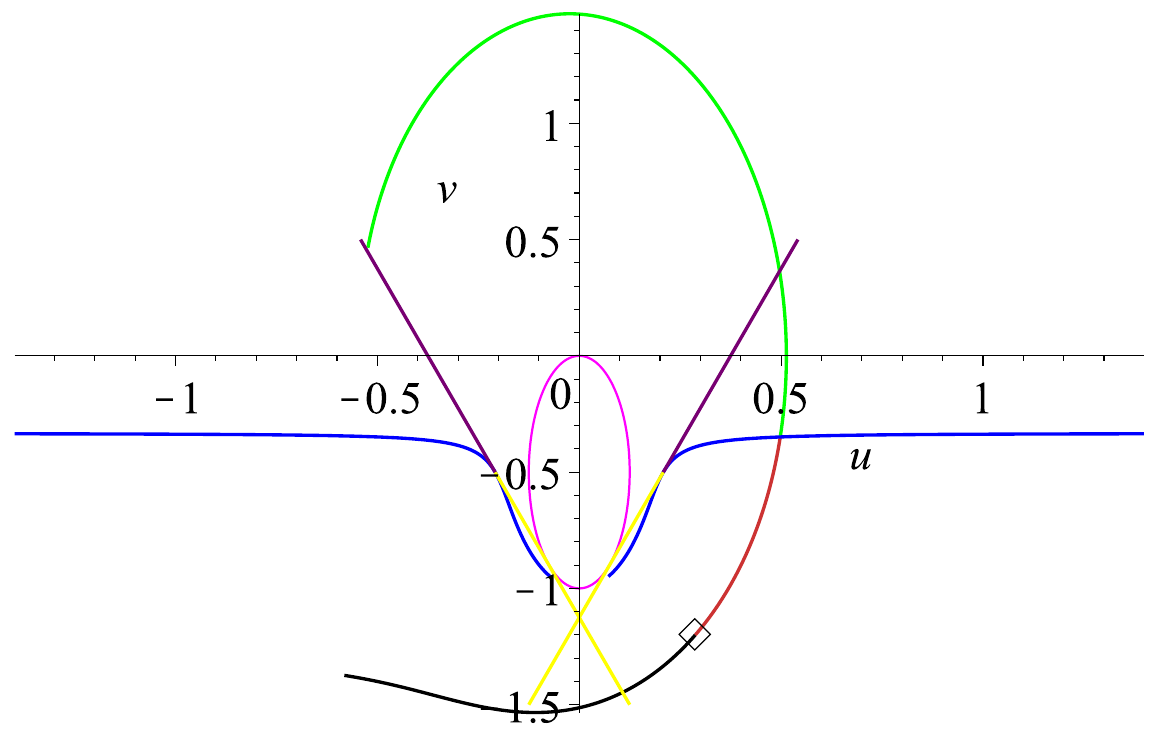}\hspace{1cm}
		\caption[]{The wave sequence for\ \ $I=(z_{_{i}},t_{_{i}})\in III$\ \ on the plane\ \ $uv$. \label{quatrocaseuv}} 
	\end{center}
\end{figure}

\subsection{The saturated surfaces}
\label{saturatedsurface}

The wave curves in slow region\ \ $(t<0)$\ \ represents on the plane\ \ $uv$ (physical planes) waves of family 1 (shock, $\mathcal{S}_{_{1}}$, rarefaction, $\mathcal{R}_{_{1}}$ and composite, $\mathcal{C}_{_{1}}$) .

It is necessary to study the interaction of this family 1 with the family 2. Here in the wave manifold, the curve from  the slow region\ \ $(t<0)$\ \ should access  their corresponding projection in\ \ $C_{_{f}}$.

We know that give a state\ \ $U$\ \ in wave manifold\ \ $\mathcal{W}$, we can obtain a unique Hugoniot$'$ through this point. Here, we are assuming  that\ \ $U$\ \ lies in the Hyperbolic region, \emph{i.e.}, the Hugoniot$'$ (and Hugoniot) through\ \ $U$\ \ crosses\ \ $\mathcal{C}$\ \ in\ \ $\mathcal{C}_{_{s}}$\ \
and\ \ $\mathcal{C}_{_{f}}$. 

Using the saturation of each wave,  we obtain a surface that crosses\ \ $\mathcal{C}_{_{s}}$\ \ to\ \
$\mathcal{C}_{_{f}}$, as we explain as follows.

First, we consider a state\ \ $U\in\mathcal{C}_{_{s}}$. From\ \ $U$, we draw the wave sequence (as described in Section \ref{sectwave}). These wave sequence defines a connected curve as we denote as
\ \ $C$:
\begin{equation}
C=\{(z,t(z),Y(z))\quad\quad\text{ such that } \quad \quad z\in \Theta\},
\end{equation}
where is\ \ $\Theta$\ \ is a interval.

From each point of each curve of\ \ $C$, we draw the Hugoniot$'$ that crosses\ \ $\mathcal{C}_{_{s}}$ (and\ \ $\mathcal{C}_{_{f}}$). So, we define the saturated of a curve\ \ $C$, denoted as\ \ $SAT_C$, as:
\begin{equation}
SAT_C=\left\{\bigcup_{z\in\Theta}sh'(U)\quad \text{ such that }\quad U\in C\right\},
\end{equation}
here\ \ $sh'(U)$\ \ is the Hugoniot$'$ through\ \ $U$.

In Fig. \ref{wavesolutions}, we give an example of a sequence of waves, starting at state\ \ 
$U\in\mathcal{C}_{_{s}}$.  In Fig. \ref{wavesolutions}.{\it a}, the yellow curve is a rarefaction to the inflection curve. From the inflection there is a blue curve, that is the composite wave.  In Fig. \ref{wavesolutions}.{\it b}, in other view, we can see the shock curve from the same state\ \ $U$.

\begin{figure}[htpb]
	\begin{center}	\includegraphics[scale=0.5,width=0.45\linewidth]{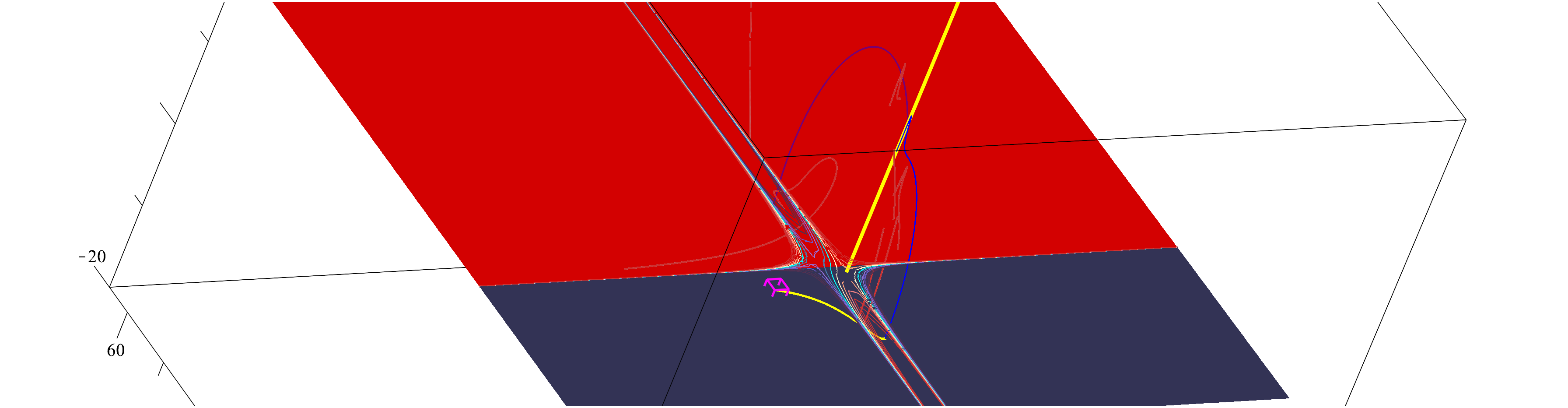}
\includegraphics[scale=0.5,width=0.45\linewidth]{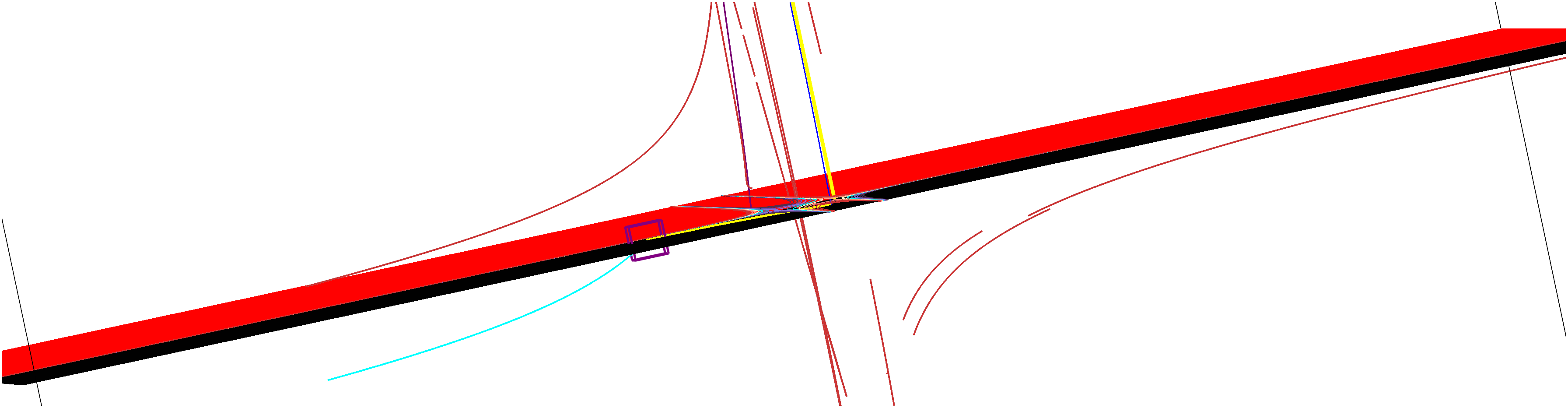}	
		\caption[]{\textit{ Left a:)}The wave sequence from a state\ \ $U\in\mathcal{C}_{_{s}}$\ \ the yellow curve represents a rarefaction and the blue is a composite wave.   \textit{ Right b:) From other view we can see the shock curve from\ \ $U$.} \label{wavesolutions}} 
	\end{center}
\end{figure}

To access\ \ $\mathcal{C}_{_{s}}$, we saturated each wave. The saturated of rarefaction is the brown surface drawn in Fig. \ref{saturated1}.{\it a}; the saturated of shock is the magenta surface  drawn in Fig. \ref{saturated1}.{\it b}; the saturated of composite is the green surface  drawn in Fig. \ref{saturated2}.{\it a}. In Fig. \ref{saturated2}.{\it b}, we drawn are saturated together.  

\begin{figure}[htpb]
	\begin{center}	\includegraphics[scale=0.5,width=0.45\linewidth]{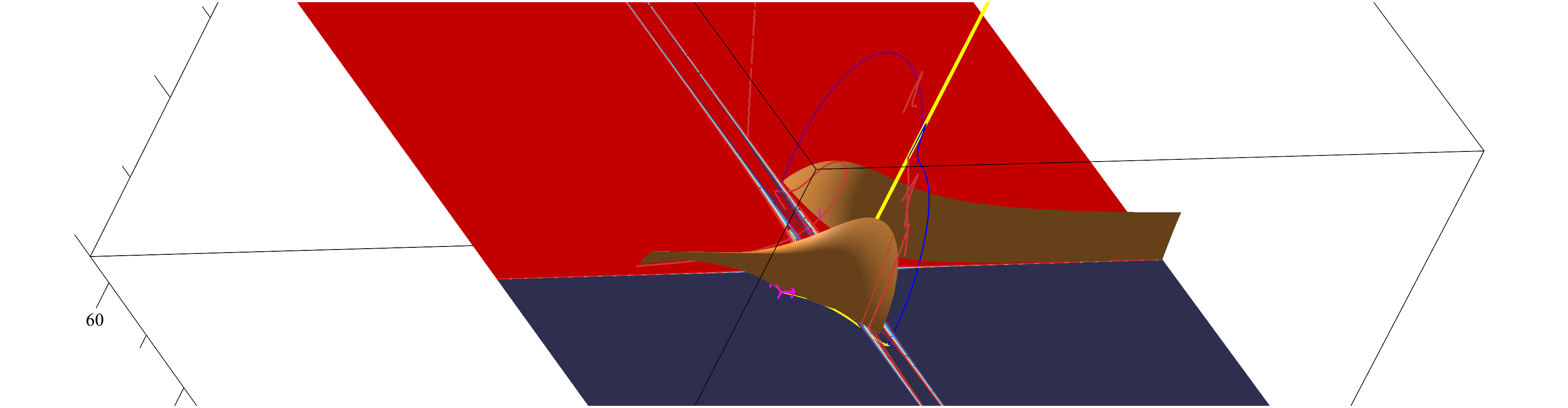}
\includegraphics[scale=0.5,width=0.45\linewidth]{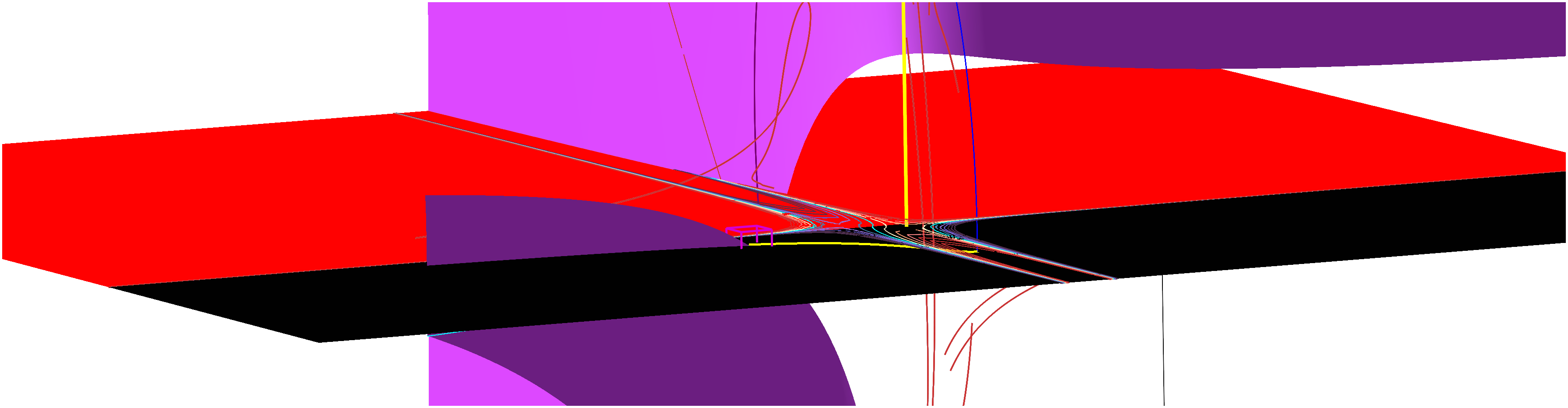}	
		\caption[]{\textit{ Left a:)} The saturated of rarefaction wave.   \textit{ Right b:) The saturated of shock wave.} \label{saturated1}} 
	\end{center}
\end{figure}

\begin{figure}[htpb]
	\begin{center}	\includegraphics[scale=0.5,width=0.45\linewidth]{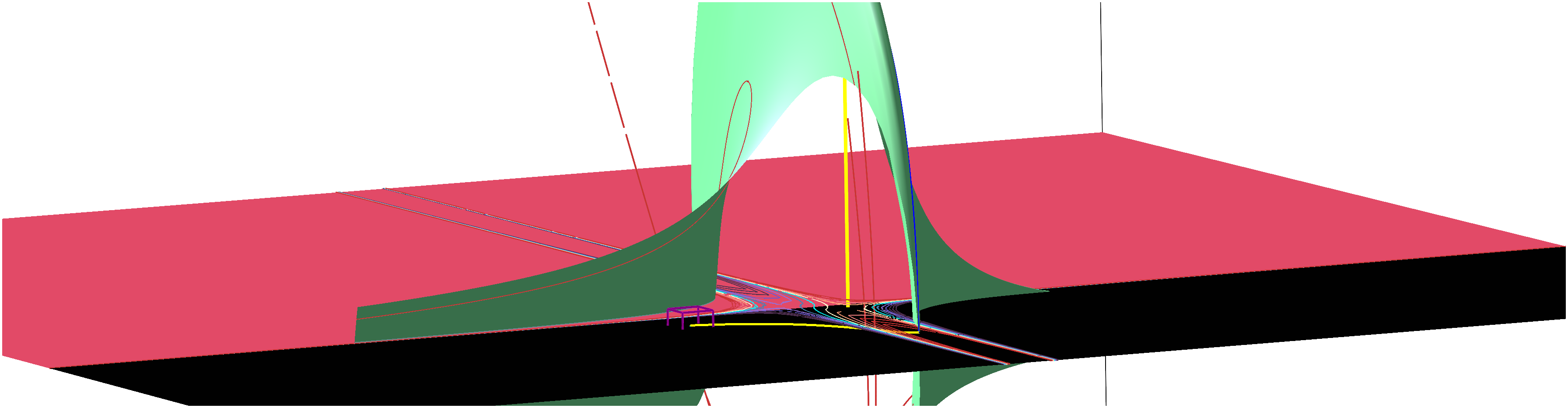}
\includegraphics[scale=0.5,width=0.45\linewidth]{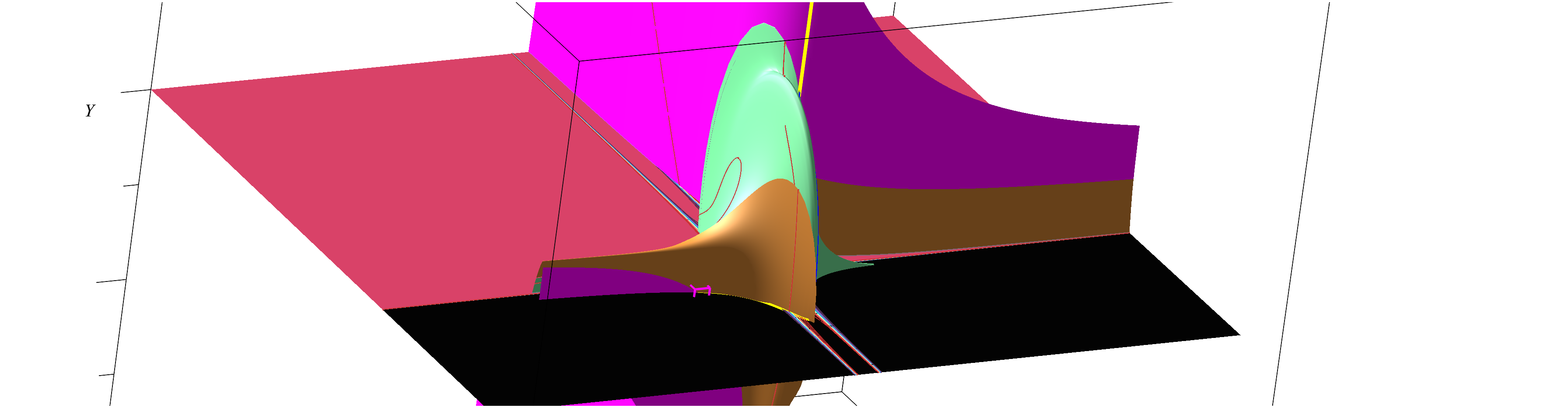}	
		\caption[]{\textit{ Left a:)}The saturated of composite wave.   \textit{ Right b:) All saturated put together.} \label{saturated2}} 
	\end{center}
\end{figure}

\subsection{2-reverse wave sequence}
\label{2reverse}

To obtain the Riemann solution we need to construct the wave in the backward direction. 
We define the backward direction through 2-reverse shock. We will show that this 2-reverse shock, in following the direction from left to right is a Lax's 2-shock, satisfying conditions (\ref{lax2}), so, we define the rarefaction (and composite waves) in the opposite direction.

For a state\ \ $U\in\mathcal{C}_{_{f}}$, we define the 2-reverse shock as the arc of Hugoniot for which the shock speed increases along of this arc. The rarefaction is constructed in the opposite direction. Following this construction and remembering that Fig. \ref{direct1}, we have that the reverse rarefaction is drawn in the direction of\ \ $\mathcal{I}_{_{f}}$\ \ and the Hugoniot is drawn in the opposite direction as in Fig. \ref{reverse1}.

\begin{figure}[htpb]
	\begin{center}
\includegraphics[scale=0.5,width=0.6\linewidth]{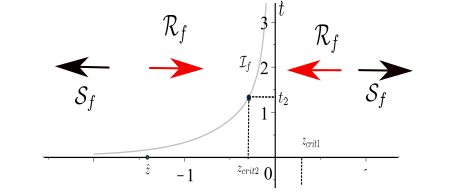}	
		\caption{Direction of reverse waves in\ \ $\mathcal{C}_{_{f}}$.} \label{reverse1}
	\end{center}
\end{figure}
\noindent
We prove now that the  reverse 2-shock satisfies the Lax's condition (\ref{lax2}).

First, we consider\ \ $\tilde{U}\in \mathcal{C}_{_{f}}$. From\ \ $\tilde{U}$\ \ we draw the Hugoniot, that we denote as\ \ $sh(\tilde{U})$\ \ and consider the arc for which shock speed increases as\ \ $sh^+(U)$. Let\ \ $U\in sh^+(\tilde{U})$\ \ and close to\ \ $\tilde{U}$. Notice that since we are constructing the reverse 2-shock, we have that for the direct wave\ \ $U\in sh^+(\tilde{U})$\ \ represents the ``left'' state and\ \ $\tilde{U}$\ \ the ``right''. Since the shock speed increasing\ \ $s$\ \ in the reverse 2-shock, follows that
\begin{equation}
s(U)>s(\tilde{U})=\lambda_{_{f}}(U),\label{ddsd}
\end{equation}
that is the first inequality in (\ref{lax2}).

To prove the other inequality of (\ref{lax2}), we use the following assumption proved in \cite{AEMP10}:

\begin{assump}\label{assump:01} Let\ \ $\mathcal{V}\in\mathcal{W}$\ \ be a neighborhood of a point\ \ $\mathcal{U}\in\mathcal{C}$\ \ away from the inflection and the coincidence loci. Let\ \ $\mathcal{V}_{_{1}}$\ \ be one of the two connected components of\ \ $\mathcal{V}-\mathcal{C}$. In this paper we assume that if\ \ $s$\ \ decreases along the Hugoniot curve through\ \ $\mathcal{U}$\ \ into\ \ $\mathcal{V}_{_{1}}$, then\ \ $s$\ \ increases along the Hugoniot$'$ curve through\ \ $\mathcal{U}$\ \ into\ \ $\mathcal{V}_{_{1}}$. A similar statement is assumed to hold if\ \ $s$\ \ increases along the Hugoniot curve into\ \ $\mathcal{V}_{_{1}}$.
\end{assump}

Now, to prove the second inequality in (\ref{lax2}), we need to obtain the projection of\ \ $U\in sh^+(\tilde{U})$\ \ into\ \ $\mathcal{C}_{_{s}}$\ \ to obtain the values of\ \ $\lambda_{_{s}}$\ \ evaluated in these projetions. To do, we define\ \ ${U}_{_{s}}$,\ \ ${U}_{_{f}}$,\ \ $ {U'}_{_{s}}$\ \ and\ \  ${U'}_{_{f}}$\ \ to be the points satisfying\ \ $(\ref{usuli})$. Notice that to reach\ \ $\tilde{U}$\ \ the shock speed decreases, thus, from the Assumption \ref{assump:01},  the shock speed along of Hugoniot$'$ increases to reach\ \ $U_{_{f}}$, so, we know that
\begin{equation}
s(U)<s(U'_{_{f}})=\lambda_{_{f}}(U'_{_{f}})\label{dds}
\end{equation}
Finally, to reaches the projection\ \ $U'_{_{s}}$, we follows the opposite direction of the arc of Hugoniot$'$ that reaches\ \ $U'_{_{f}}$, thus the shock speed decreases along of this curve, so, we have that:
\begin{equation}
\lambda_{_{s}}(U')=s(U')<s(U).\label{dds2}
\end{equation}
Using (\ref{ddsd}), (\ref{dds}) and\ \ $(\ref{dds2})$, we have that the shock satisfies the Lax's condition (\ref{lax2}) and it is a Lax's 2-shock and the proof is completed.

\begin{remark} In \cite{AEMP10}, authors proved that the shock satisfies the Lax's conditions, \emph{i.e.}, the shock is either  1-shock of Lax or a 2-shock of Lax. However, in the paper \cite{AEMP10},  to prove that the shock from for a state in\ \ $C_{_{s}}$\ \ is a 1-shock, authors consider\ \ $U\in\mathcal{C}_{_{s}}$, as a abuse in notation they consider another state that belongs to 1-shock curve that they also call as\ \ $\mathcal{U}$. Despite the abuse of notation, they correctly proved that the shock is a 1-shock.
A similar abuse of notation appears to consider that the 2-reverse shock is satisfies the Lax's condition (\ref{lax2}). In the present paper, we corrected these mistakes and we gave details of the proof that the 2-reverse shock satisfies the Lax's conditions (\ref{lax2}).
\end{remark}

In the following examples we show the Riemann solution in the wave manifold\ \ $\mathcal{W}$\ \ and the corresponding solution in\ \ $uv$-plane.
\subsection{Examples of solutions}

\noindent
\textbf{Example 1.}

First, we consider a state\ \ $W_{_{L}}=(u_{_{L}},v_{_{L}})$\ \ for which the state\ \ $\mathcal{U}_{_{L}}$ (obtained by Algorithm RS) belongs to\ \  $I$.

Applying the Algorithm RS, we construct the waves of family-1 through\ \ $W_{_{L}}$. For this example, we consider,  see Fig \ref{figura1sol1}.{\it a}. 

In this example, we utilize\ \ $z_{_{L}}=-5$\ \ and\ \ $t_{_{L}}=-0.065$ (the corresponding value in the\ \ $uv$\ \ plane is\ \ $u_{_{L}}=-0.2430769231$\ \ and\ \ $v_{_{L}}=-0.6365384615$). The sequence  through\ \  $\mathcal{U}_{_{L}}$ is\ \ $\mathcal{H}_{_{1}}\mathcal{U}_{_{L}}\mathcal{R}_{_{1}}$\ \ and the curve stops at the coincidence curve.
Notice that in this sequence, the rarefaction curve\ \ $(\mathcal{R}_{_{1}})$\ \ crosses\ \ $\mathcal{C}_{_{s}}$\ \ at\ \ $t=0$,   see Figs. \ref{figura1sol3} (in wave manifold\ \ $\mathcal{W}$) and \ref{figura1sol1}.{\it a} in the\ \ $uv$-plane. 

The saturated curves are described also in Fig. \ref{figura1sol3}. The magenta surface is the saturated 
of\ \  $\mathcal{H}_{_{1}}$; the brown surface is the saturated of\ \ $\mathcal{R}_{_{1}}$\ \ and\ \ $\mathcal{R}_{_{2}}$; the green surface is the saturated of\ \ $\mathcal{C}_{_{2}}$.  The curve\ \ $\mathcal{C}_{_{2}}$\ \ is drawn to reach the double contact in\ \ $z=1/3$.

\begin{figure}[htpb]
	\begin{center}	\includegraphics[scale=0.8,width=0.47\linewidth]{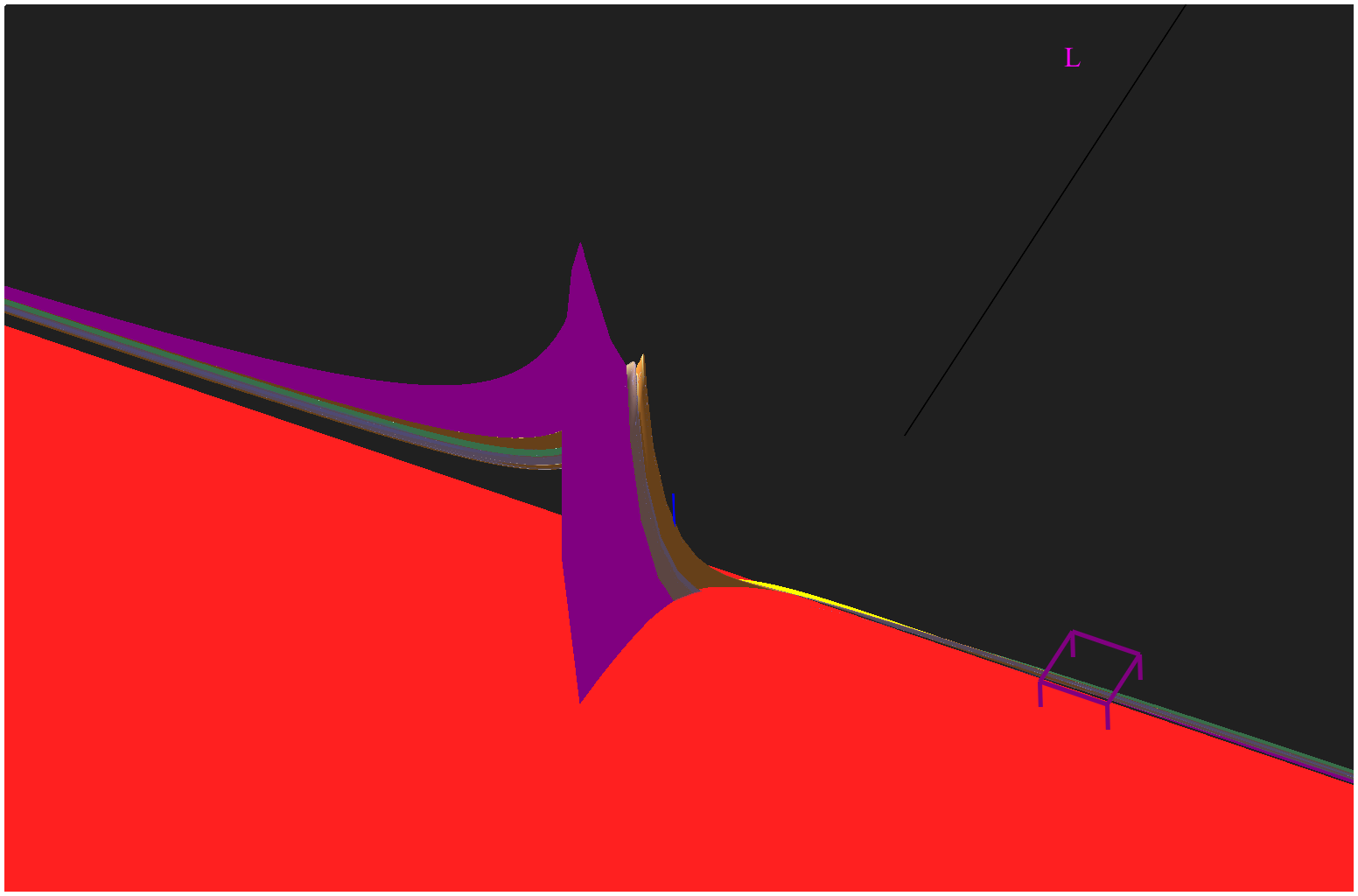}
\includegraphics[scale=0.8,width=0.47\linewidth]{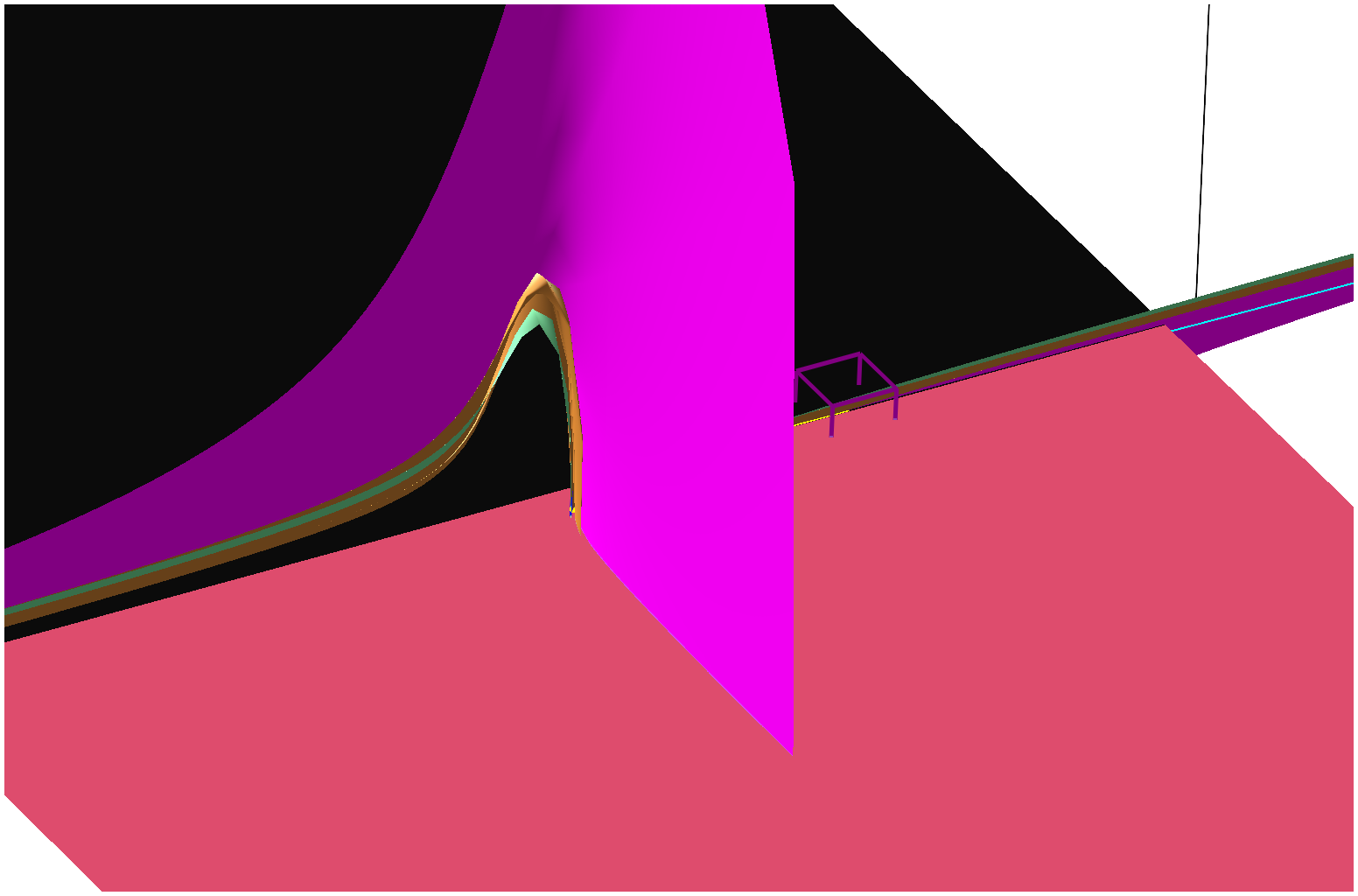}
		\caption[]{The wave sequence and the saturated surface from\ \ $\mathcal{U}_{_{L}}$\ \ in region\ \ $I$, here\ \ $z_{_{L}}=-5$\ \ and\ \ $t_{_{L}}=-0.065$. From\ \ $\mathcal{U}_{_{L}}$, denoted as a magenta box, we have the wave sequence\ \ $\mathcal{H}_{_{1}}\mathcal{U}_{_{L}}\mathcal{R}_{_{1}}$. The magenta surface is the saturated of\ \  $\mathcal{H}_{_{1}}$; the brown surface is the saturated of\ \ $\mathcal{R}_{_{1}}$\ \ and\ \ $\mathcal{R}_{_{2}}$; the green surface is the saturated of\ \ $\mathcal{C}_{_{2}}$.  The curve\ \ $\mathcal{C}_{_{2}}$\ \ is drawn to reach the double contact in\ \ $z=1/3$.
 \label{figura1sol3} } 
	\end{center}
\end{figure}

\begin{figure}[htpb]
	\begin{center}
		\includegraphics[scale=0.8,width=0.7\linewidth]{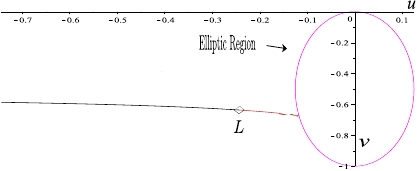}	
		\caption[]{ The wave sequence on the plane $uv$, here   $u_{_{L}}=-0.2430769231$ and $v_{_{L}}=-0.6365384615$. From $L$ there is a $\mathcal{H}_{_{1}}$ (black curve) and a $\mathcal{R}_{_{1}}$ (red curve). } \label{figura1sol1}
	\end{center}
\end{figure}

\newpage

\noindent
To construct the 2-reverse wave sequence, here we have two possibilities. 

The first possibility, we give\ \  $(z_{_{R}}=2,t_{_{R}}=2)\in\mathcal{C}_{_{f}}$ (the corresponding values on the plane\ \ $uv$\ \ are\ \ $(u_{_{R}}=0.8500000000, v_{_{R}}=3.200000000)$. From\ \
$\mathcal{U}_{_{r}}$ (obtained as described in Algorithm RS) we draw the 2-reverse wave sequence that is given by\ \ $\mathcal{H}_{_{2}}\mathcal{U}_{_{R}} \mathcal{R}_{_{2}}\mathcal{C}_{_{2}}$, as described in Fig. \ref{figura1sol2}. In the wave manifold\ \ $\mathcal{W}$,\  $\mathcal{R}_{_{2}}$\ \ is the green curve and\ \ $\mathcal{C}_{_{2}}$\ \ is the black curve, the\ \ $\mathcal{H}_{_{2}}$\ \ is below\ \ $\mathcal{C}_{_{f}}$. Notice that, from Fig. \ref{figura1sol2}.{\it a}, that\ \ $\mathcal{C}_{_{2}}$\ \ crosses the saturated of\ \ $\mathcal{H}_{_{1}}$ (the magenta surface) in a state in\ \ $\mathcal{W}$.

The Riemann solution from state\ \ $\mathcal{U}_{_{L}}$\ \ consists of a\ \ $\mathcal{H}_{_{1}}$\ \ from\ \ $\mathcal{U}_{_{L}}$\ \ to state\ \ $\mathcal{U}_{_{M}}$; from\ \ $\mathcal{U}_{_{M}}$\ \ there is a\ \
$\mathcal{C}_{_{2}}$ (a right characteristic shock) followed by a\ \ $\mathcal{R}_{_{2}}$\ \ to\ \ $\mathcal{U}_{_{R}}$. The solution in\ \ $\mathcal{W}$\ \ is described in Fig. \ref{figura1sol2}.{\it a} and in the\ \ $uv$\ \ plane is described in \ref{figura1sol2}.{\it b}.

\begin{figure}[htpb]
	\begin{center}
\includegraphics[scale=0.8,width=0.47\linewidth]{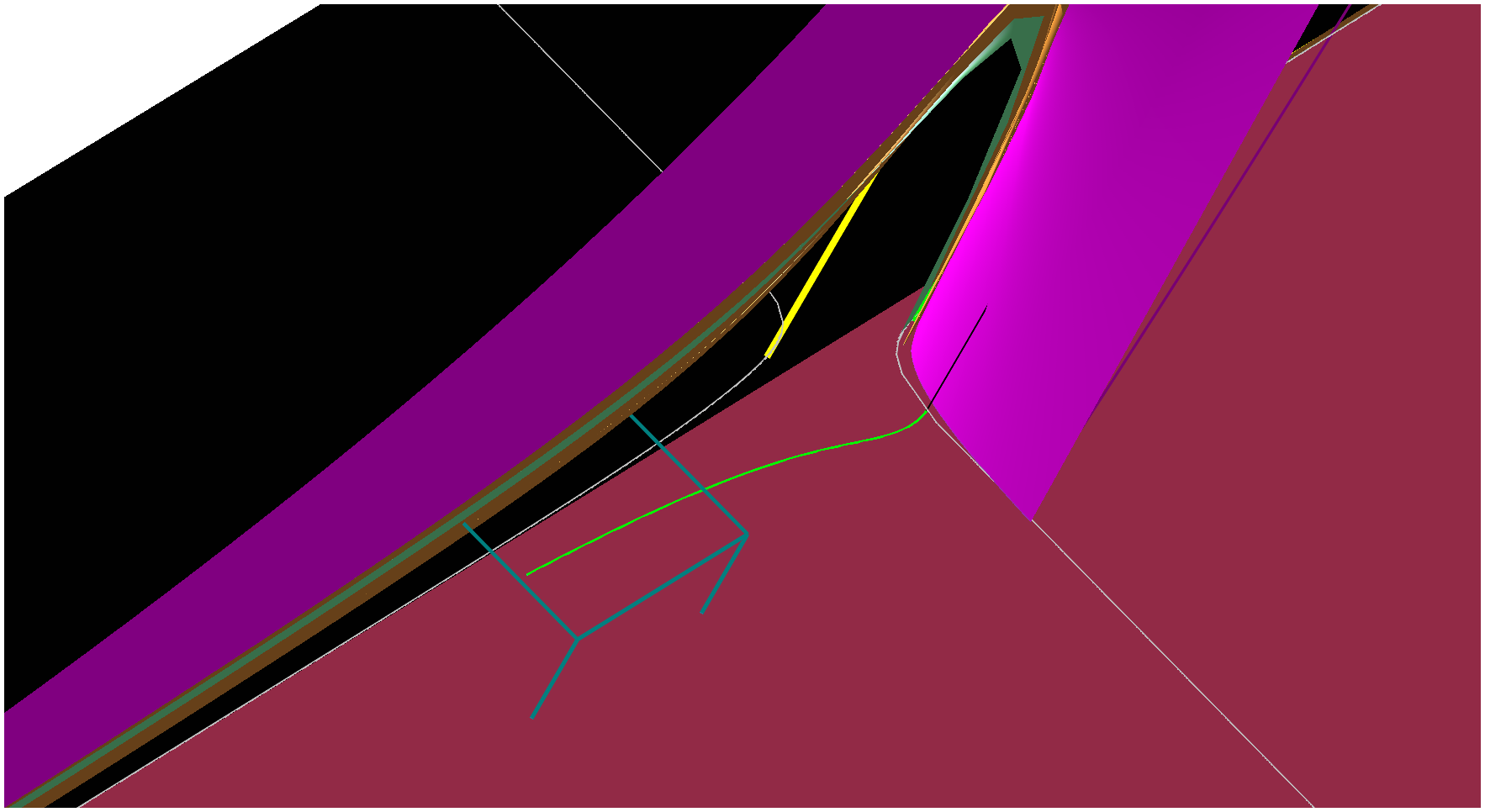}	\includegraphics[scale=0.8,width=0.47\linewidth]{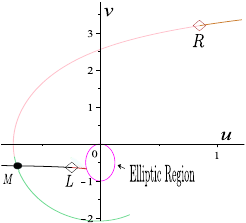}
		\caption[]{The Riemann solution from state\ \ $\mathcal{U}_{_{L}}$\ \ consists of a\ \ $\mathcal{H}_{_{1}}$\ \ from\ \ $\mathcal{U}_{_{L}}$\ \ to state\ \ $\mathcal{U}_{_{M}}$; from\ \ $\mathcal{U}_{_{M}}$\ \ there is a\ \ $\mathcal{C}_{_{2}}$ (a right characteristic shock) followed by a\ \ $\mathcal{R}_{_{2}}$\ \ to\ \ $\mathcal{U}_{_{R}}$. \textit{Left- a:)} The solution in the wave manifold\ \ $\mathcal{W}$. \textit{Right- b:)}The Riemann solution  in plane\ \ $uv$, for\ \ $L=(u_{_{L}}=-0.2430769231,v_{_{L}}=-0.6365384615)$\ \ and\ \ $R=(u_{_{R}}=-0.85, v_{_{R}}=3.2)$. \label{figura1sol2}} 
	\end{center}
\end{figure}

In the second  possibility for\ \ $\mathcal{U}_{_{R}}$, we give\ \  $(z_{_{R}}=0.385677655, t_{_{R}}=4.849940052)\in\mathcal{C}_{_{f}}$ (the corresponding values on the plane\ \ $uv$\ \ are\ \ $(u_{_{R}}=-0.6,v_{_{R}}=-1.1)$. From\ \ $\mathcal{U}_{_{r}}$\ \ (obtained as described in Algorithm RS) we draw the 2-reverse wave sequence that is given by\ \ $\mathcal{C}_{_{2}}\mathcal{R}_{_{2}}\mathcal{U}_{_{R}} \mathcal{H}_{_{2}}$, as described in Fig. \ref{figura1sol4}. In the wave manifold\ \ $\mathcal{W}$,\ $\mathcal{R}_{_{2}}$\ \ is the green curve, $\mathcal{H}_{_{2}}$\ \ is the blue curve and\ \ $\mathcal{C}_{_{2}}$\ \ is below\ \ $\mathcal{C}_{_{f}}$. Notice that, from Fig. \ref{figura1sol4}.{\it a}, that\ \
$\mathcal{R}_{_{2}}$\ \ crosses the saturated of\ \ $\mathcal{H}_{_{1}}$ (the magenta surface) in a state in\ \ $\mathcal{W}$.

The Riemann solution from state\ \ $\mathcal{U}_{_{L}}$\ \ consists of a\ \ $\mathcal{H}_{_{1}}$\ \ from\ \ $\mathcal{U}_{_{L}}$\ \ to state\ \ $\mathcal{U}_{_{M}}$; from\ \ $\mathcal{U}_{_{M}}$\ \ there is a\ \ $\mathcal{R}_{_{2}}$\ \ to\ \ $\mathcal{U}_{_{R}}$. The solution in\ \ $\mathcal{W}$\ \ is described in Fig. \ref{figura1sol4}.{\it a} and in the\ \ $uv$\ \ plane is described in \ref{figura1sol4}.{\it b}.

\begin{figure}[htpb]
	\begin{center}	
\includegraphics[scale=0.8,width=0.47\linewidth]{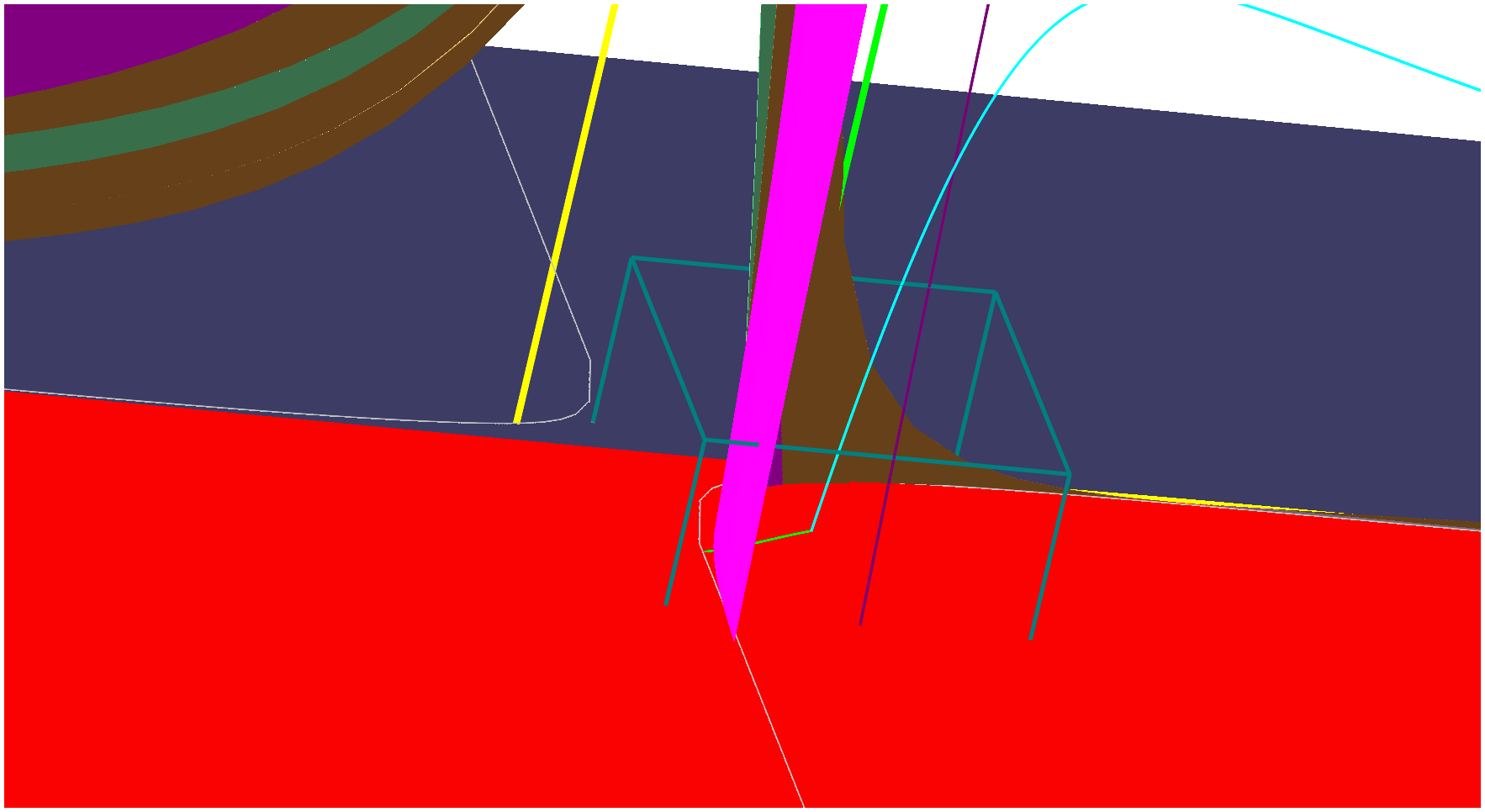}
\includegraphics[scale=0.8,width=0.47\linewidth]{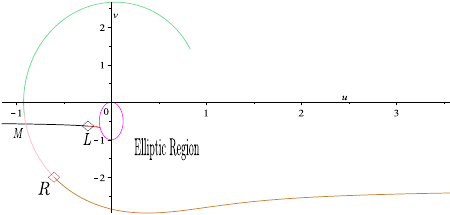}
		\caption[]{The 2-reverse wave sequence that is $ \mathcal{C}_{_{2}}\mathcal{R}_{_{2}}\mathcal{U}_{_{R}} \mathcal{H}_{_{2}}$. 
The Riemann solution from state $\mathcal{U}_{_{L}}$ consists of a $\mathcal{H}_{_{1}}$ from $\mathcal{U}_{_{L}}$ to state $\mathcal{U}_{_{M}}$; from $\mathcal{U}_{_{M}}$ there is a $\mathcal{R}_{_{2}}$ to $\mathcal{U}_{_{R}}$. \textit{Left- a:)} The solution in the wave manifold $\mathcal{W}$, here $\mathcal{R}_{_{2}}$ is the green curve, $\mathcal{H}_{_{2}}$ is the blue curve and $\mathcal{C}_{_{2}}$ is below $\mathcal{C}_{_{f}}$. \textit{Right- b:)}  The Riemann solution  in plane $uv$, for $L=(u_{_{L}}=-0.2430769231,v_{_{L}}=-0.6365384615)$ and $R=(u_{_{R}}=-0.6, v_{_{R}}=-1.1)$. \label{figura1sol4}}
	\end{center}
\end{figure}

\noindent
\textbf{Example 2.}

In this example we utilize\ \ $z_{_{L}}=1$\ \ and\ \ $t_{_{L}}=-1$ (the corresponding value in the\ \ $uv$ plane is\ \ $u_{_{L}}=0.125, v_{_{L}}=0.5 $), the state\ \ $(z_{_{L}},t_{_{L}})$\ \ belongs also to the region\ \ $I$. The sequence  through\ \  $\mathcal{U}_{_{L}}$ is\ \ 
$\mathcal{H}_{_{1}}\mathcal{U}_{_{L}}\mathcal{R}_{_{1}}$, there is no composite wave because the rarefaction does not reach the inflection curve. However, notice that, the rarefaction curve\ \
$(\mathcal{R}_{_{1}})$\ \ crosses\ \ $\mathcal{C}_{_{s}}$\ \  at\ \ $t=0$\ \ and the wave curve stops,   see Figs. \ref{figura1ex2-1} (in wave manifold\ \ $\mathcal{W}$) and \ref{figura1ex2-2} in the\ \ $uv$-plane. 

The saturated curves are described also in Fig. \ref{figura1ex2-2}. The magenta surface is the saturated of\ \  $\mathcal{H}_{_{1}}$; the brown surface is the saturated of\ \ $\mathcal{R}_{_{1}}$\ \ and\ \ $\mathcal{R}_{_{2}}$.

\begin{figure}[htpb]
	\begin{center}	\includegraphics[scale=0.8,width=0.47\linewidth]{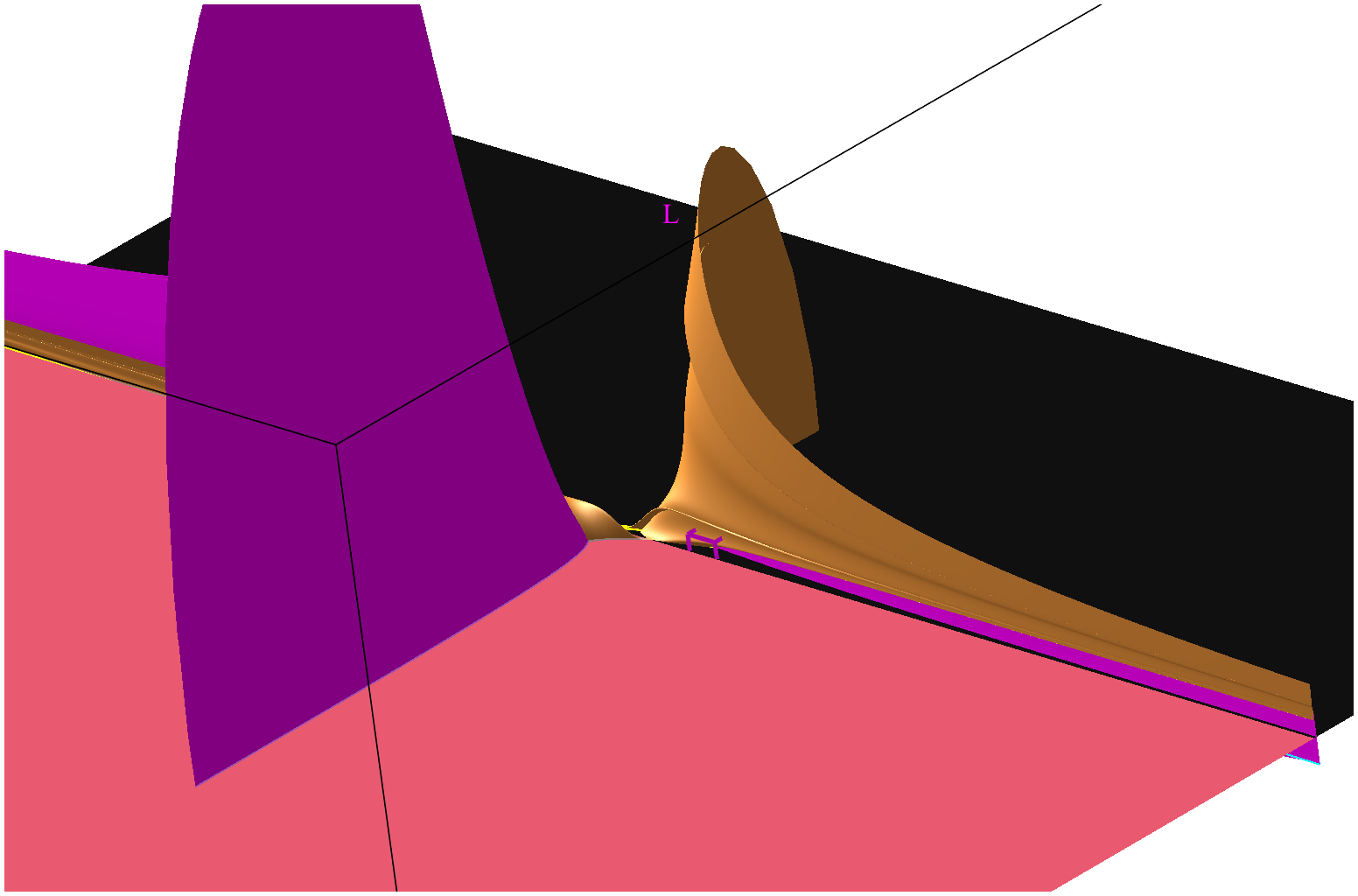}
\includegraphics[scale=0.8,width=0.47\linewidth]{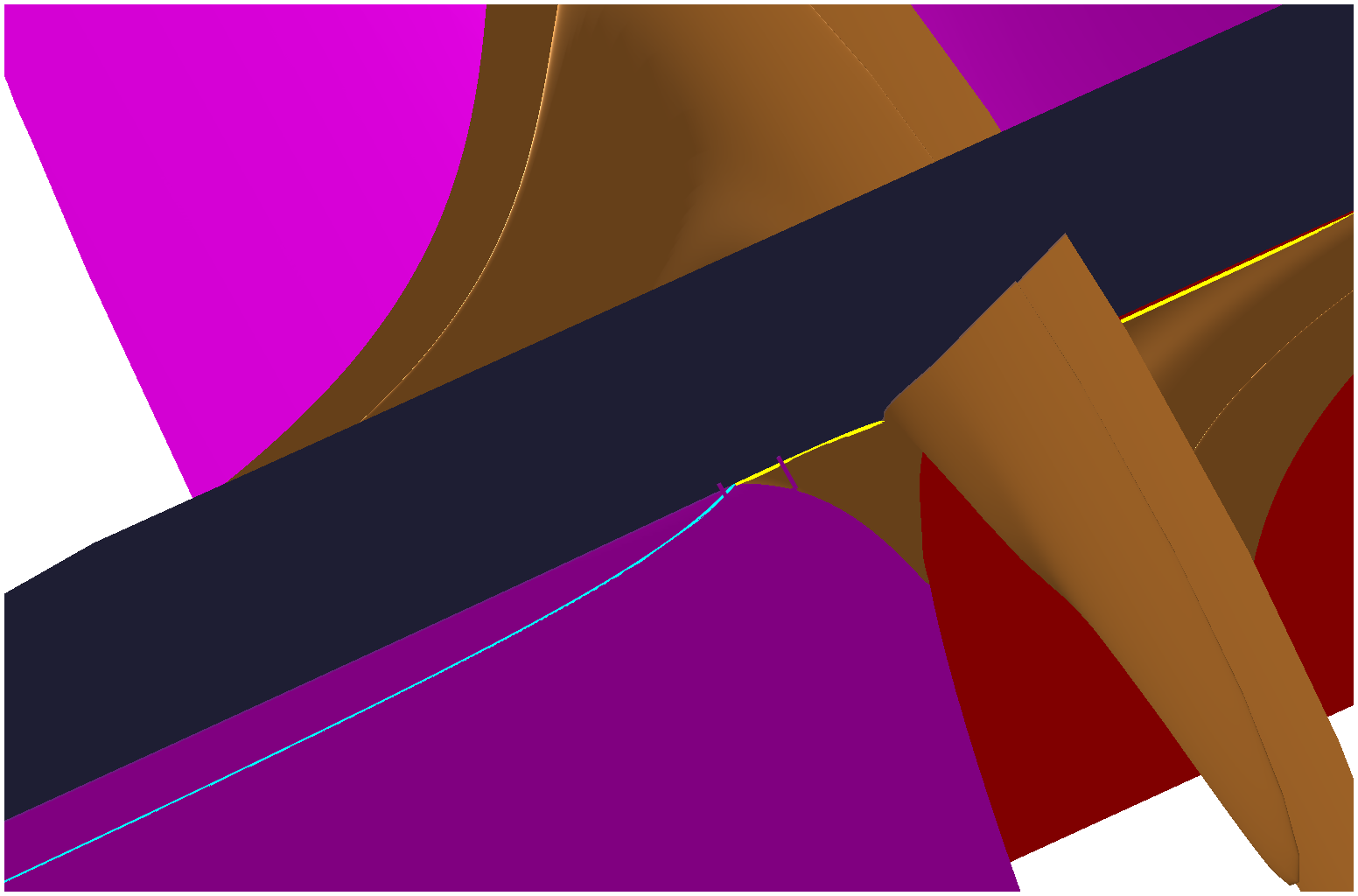}
		\caption[]{The wave sequence and the saturated surface from\ \ $\mathcal{U}_{_{L}}$\ \ in region\ \ $II$, here\ \ $z_{_{L}}=1$\ \ and\ \ $t_{_{L}}=-1$. From\ \ $\mathcal{U}_{_{L}}$, denoted as a magenta box, we have the wave sequence\ \ $\mathcal{H}_{_{1}}\mathcal{U}_{_{L}}\mathcal{R}_{_{1}}\mathcal{R}_{_{2}}$. The magenta surface is the saturated of\ \ $\mathcal{H}_{_{1}}$\ \ and the brown surface is the saturated of\ \ $\mathcal{R}_{_{1}}$\ \ and\ \ $\mathcal{R}_{_{2}}$; there is no composite wave, because the rarefaction does not reach the inflection curve. Here, we have different views of the\ \ $\mathcal{U}_{_{L}}$. \textit{Left- a:)} The view for\ \ $Y>0$. \textit{Right- b:)} The view with\ \ $Y<0$\ \ the blue curve represents\ \ $\mathcal{H}_{_{1}}$\ \ and yellow is\ \ $\mathcal{R}_{_{1}}$. \label{figura1ex2-1}} 
	\end{center}
\end{figure}

\begin{figure}[htpb]
	\begin{center}
		\includegraphics[scale=0.8,width=0.75\linewidth]{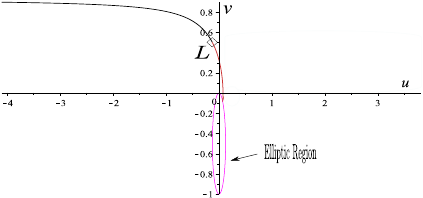}	
		\caption[]{The wave sequence on the plane\ \ $uv$, here\ \ $u_{_{L}}=0.125$\ \ and\ \ $v_{_{L}}=0.5$. From\ \ $L$\ \ there is a\ \ $\mathcal{H}_{_{1}}$ (black curve) and a\ \ $\mathcal{R}_{_{1}}$ (red curve). The magenta ellipse is the elliptic region.\label{figura1ex2-2}} 
	\end{center}
\end{figure}

To construct the 2-reverse wave sequence, we consider\ \ $\mathcal{U}_{_{r}}$ (obtained as described in Algorithm RS) and we draw the 2-reverse wave sequence that is given, in this example,  by\ \ $\mathcal{C}_{_{2}}\mathcal{R}_{_{2}}\mathcal{U}_{_{R}} \mathcal{H}_{_{2}}$, as described in Fig. \ref{figura1sol2}. In the wave manifold\ \ $\mathcal{W}$,\  $\mathcal{R}_{_{2}}$\ \ is the green curve on the plane\ \ $\mathcal{C}_{_{f}}$\ \ and\ \ $\mathcal{C}_{_{2}}$\ \ is the black curve for\ \ $Y<0$ (below\ \ $\mathcal{C}_{_{f}}$, the\ \ $\mathcal{H}_{_{2}}$\ \ is the blue curve for\ \ $Y<0$ (above\ \ $\mathcal{C}_{_{f}}$). Notice that, from Fig. \ref{figura1ex2-3}.{\it a}, that\ \ $\mathcal{C}_{_{2}}$\ \ crosses the saturated of\ \ $\mathcal{H}_{_{1}}$ (the magenta surface) in a state in\ \ $\mathcal{W}$\ \ for\ \ $Y<0$ (below\ \ $\mathcal{C}_{_{f}}$).

The Riemann solution from state\ \ $\mathcal{U}_{_{L}}$\ \ consists of a\ \ $\mathcal{H}_{_{1}}$\ \ from\ \ $\mathcal{U}_{_{L}}$\ \ to state\ \ $\mathcal{U}_{_{M}}$; from\ \ $\mathcal{U}_{_{M}}$\ \ there is a\ \
$\mathcal{C}_{_{2}}$ (a right characteristic shock) followed by a\ \ $\mathcal{R}_{_{2}}$\ \ to\ \
$\mathcal{U}_{_{R}}$. The solution in\ \ $\mathcal{W}$\ \ is described in Fig. \ref{figura1ex2-3} and in the\ \ $uv$\ \ plane is described in  Fig. \ref{figura1ex2-4}.

\begin{figure}[htpb]
	\begin{center}	\includegraphics[scale=0.8,width=0.47\linewidth]{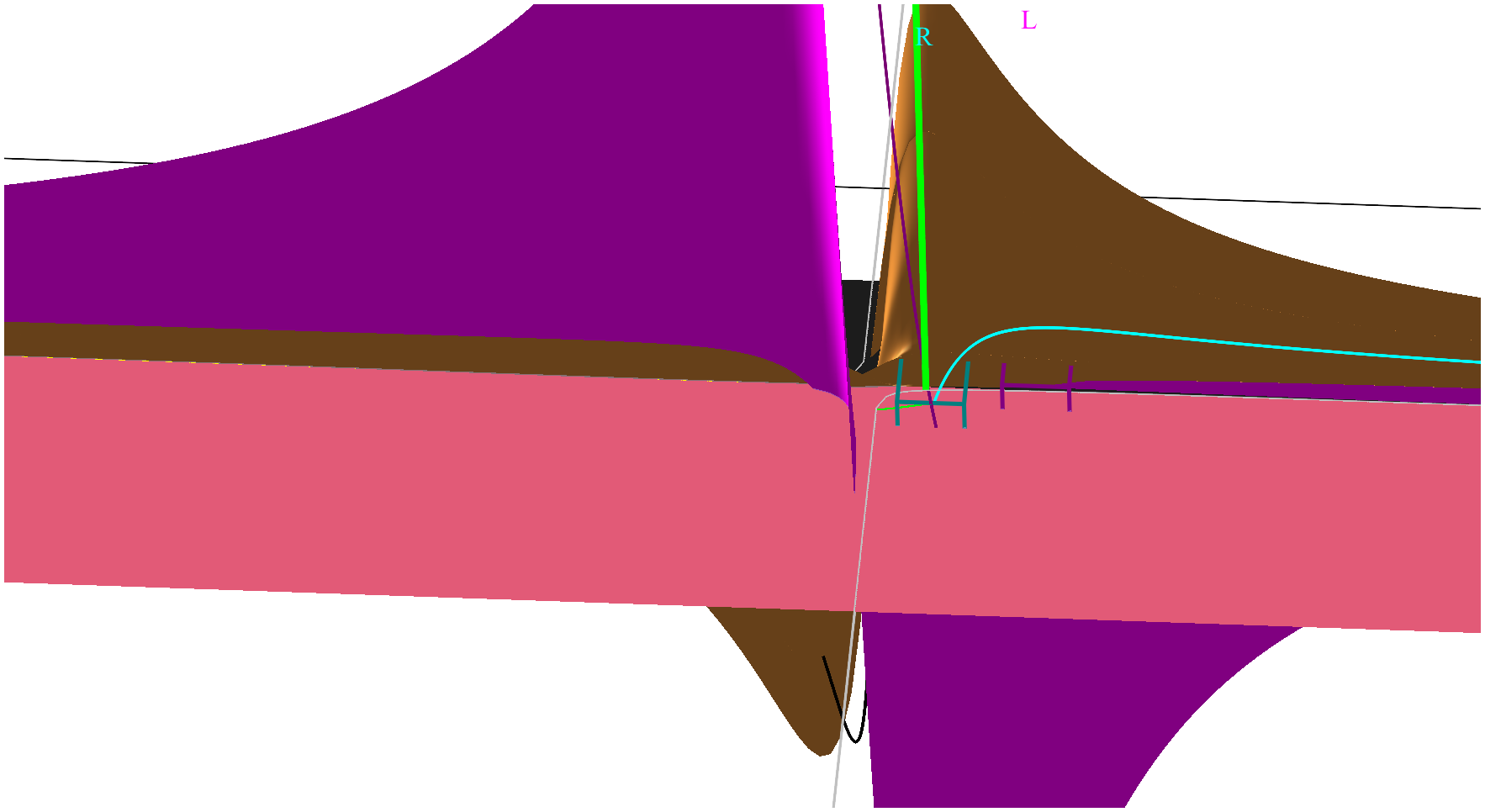}
\includegraphics[scale=0.8,width=0.47\linewidth]{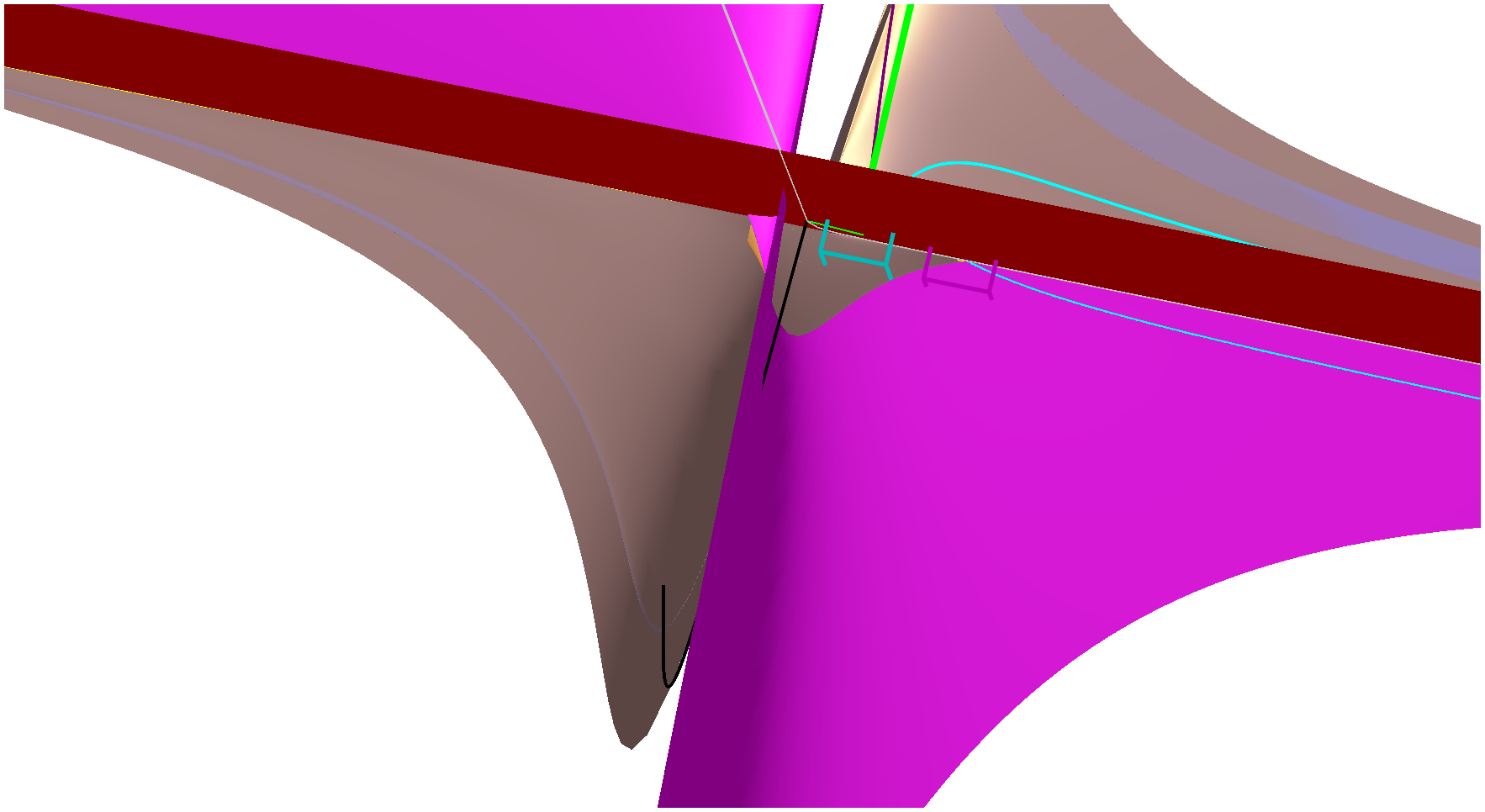}
		\caption[]{The 2-reverse wave sequence that is\ \  $\mathcal{C}_{_{2}}\mathcal{R}_{_{2}}\mathcal{U}_{_{R}} \mathcal{H}_{_{2}}$. The green curve on the plane\ \ $\mathcal{C}_{_{f}}$\ \ and\ \ $\mathcal{C}_{_{2}}$\ \ is the black curve for\ \ $Y<0$ (below\ \ $\mathcal{C}_{_{f}}$, the\ \ $\mathcal{H}_{_{2}}$\ \ is the blue curve for\ \ $Y<0$ (above\ \ $\mathcal{C}_{_{f}}$). Notice that\ \ $\mathcal{C}_{_{2}}$\ \ crosses the saturated of\ \ $\mathcal{H}_{_{1}}$ (the magenta surface) in a state in\ \ $\mathcal{W}$\ \ for\ \ $Y<0$ (below\ \ $\mathcal{C}_{_{f}}$).\label{figura1ex2-3}} 
	\end{center}
\end{figure}

\begin{figure}[htpb]
	\begin{center}
	\includegraphics[scale=0.8,width=0.75\linewidth]{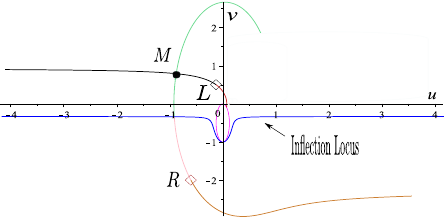}
		\caption[]{The Riemann solution  in plane\ \ $uv$, for\ \ $L=(u_{_{L}}=-0.125, v_{_{L}}=3.5)$\ \ and\ \ $R=(u_{_{R}}=-0.75, v_{_{R}}=-2.5)$.  From state\ \ $L$\ \ consists of a\ \ $\mathcal{H}_{_{1}}$\ \ from\ \ $L$\ \ to state\ \ $M$; from\ \ $M$\ \ there is a\ \ $\mathcal{C}_{_{2}}$ (a right characteristic shock) followed by a\ \ $\mathcal{R}_{_{2}}$\ \ to\ \ $R$. \label{figura1ex2-4}} 
	\end{center}
\end{figure}

\noindent
\textbf{Example 3.}

In this example, we utilize\ \ $z_{_{L}}=-1$\ \ and\ \ $t_{_{L}}=-4$ (the corresponding value in the\ \ $uv$\ \ plane is\ \ $u_{_{L}}=-0.125, v_{_{L}}=3.5 $), the state\ \ $(z_{_{L}},t_{_{L}})$\ \ belongs to the region\ \ $II$. The sequence  through\ \  $\mathcal{U}_{_{L}}$\ \ is\ \ 
$\mathcal{H}_{_{1}}\mathcal{U}_{_{L}}\mathcal{R}_{_{1}}\mathcal{C}_{_{1}}$, see Figs. \ref{figura1ex3-1} (in wave manifold\ \ $\mathcal{W}$) and \ref{figura1ex2-2} in the\ \ $uv$-plane. 

In Fig. \ref{figura1ex3-1}, the magenta surface is the saturated of\ \  $\mathcal{H}_{_{1}}$; the brown surface is the saturated of\ \ $\mathcal{R}_{_{1}}$\ \ and\ \ $\mathcal{R}_{_{2}}$; the green surface is the saturated of\ \ $\mathcal{C}_{_{2}}$. The curve\ \ $\mathcal{C}_{_{2}}$\ \ is drawn to reach the double contact in\ \ $z=-1/3$. In Fig. \ref{figura1ex3-2}, the double contact\ \ $z=-1/3$ is the yellow straight line.

\begin{figure}[htpb]
	\begin{center}	\includegraphics[scale=0.8,width=0.47\linewidth]{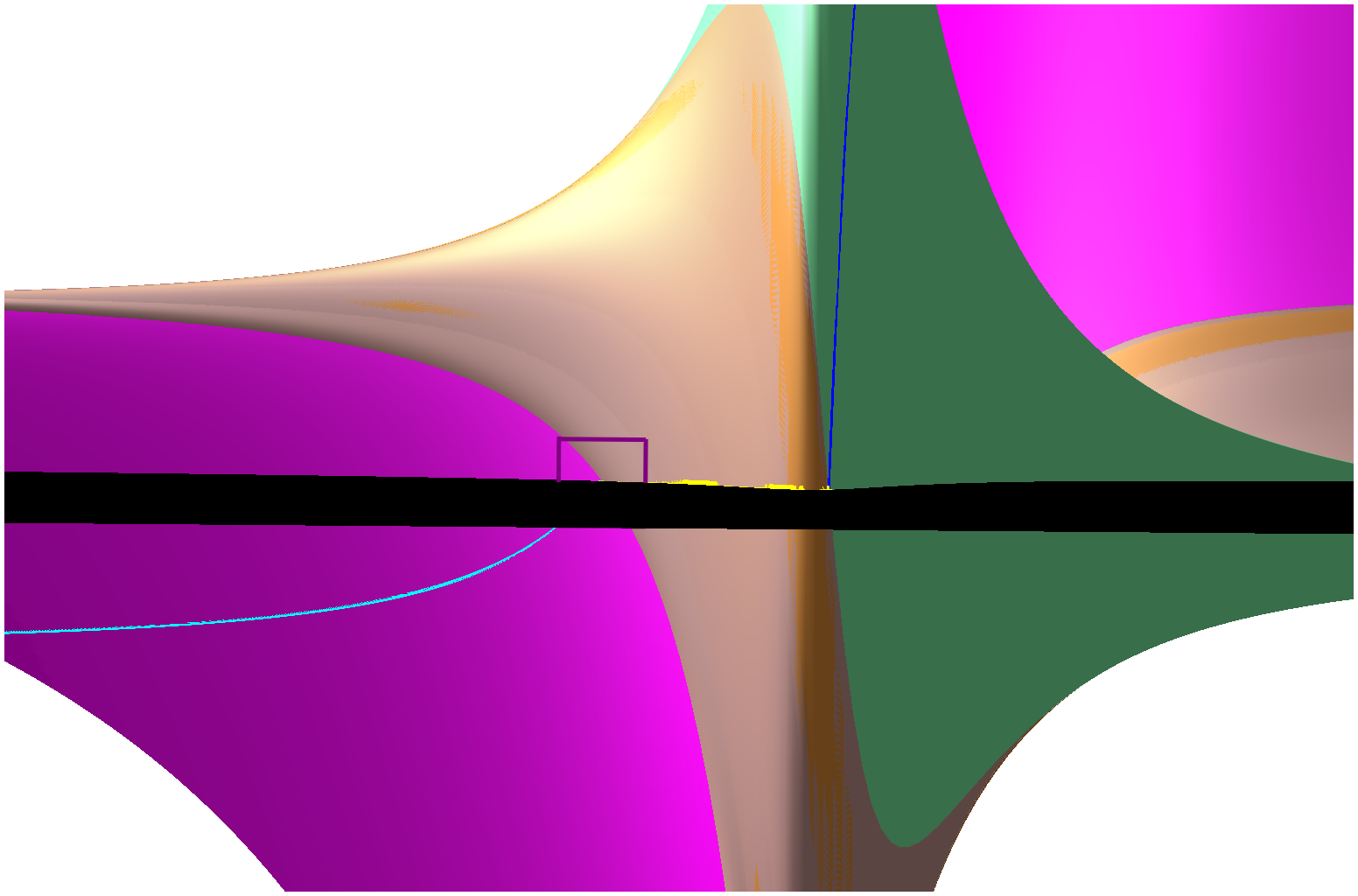}
\includegraphics[scale=0.8,width=0.47\linewidth]{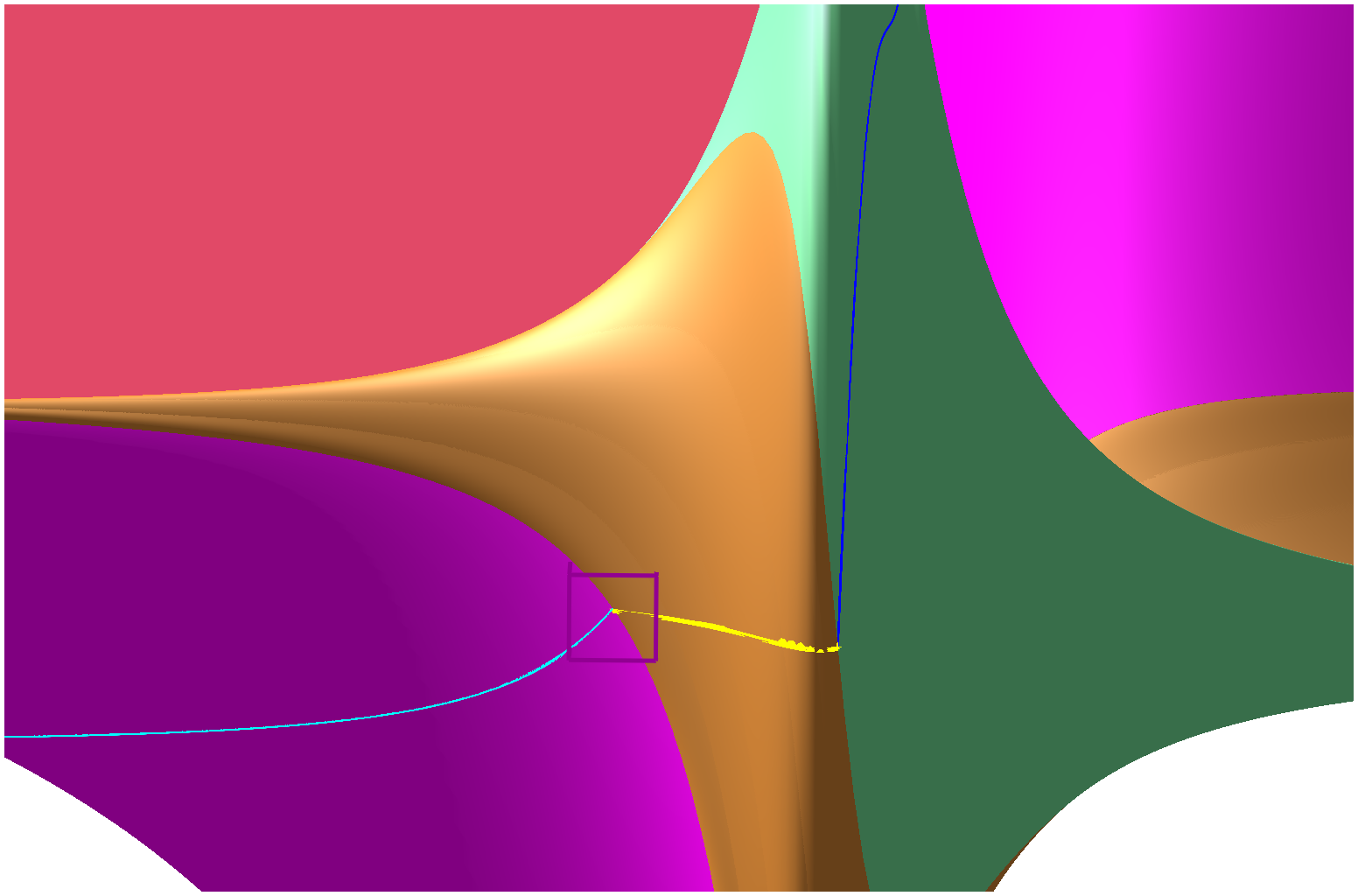}
		\caption[]{Here the values for\ \ $\mathcal{U}_{_{L}}$\ \ in\ \ $III$\ \ with\ \ $z_{_{L}}=-1$\ \ and\ \ $t_{_{L}}=-4$. The sequence  through\ \  $\mathcal{U}_{_{L}}$\ \ is\ \ 
$\mathcal{H}_{_{1}}\mathcal{U}_{_{L}}\mathcal{R}_{_{1}}\mathcal{C}_{_{1}}$. The magenta surface is the saturated of\ \  $\mathcal{H}_{_{1}}$ (blue curve); the brown surface is the saturated of\ \ $\mathcal{R}_{_{1}}$ (yellow curve); the green surface is the saturated of\ \ $\mathcal{C}_{_{2}}$ (cyan).  The curve\ \ $\mathcal{C}_{_{2}}$\ \ is drawn to reach the double contact in\ \ $z=-1/3$.\label{figura1ex3-1}} 
	\end{center}
\end{figure}

\begin{figure}[htpb]
	\begin{center}
		\includegraphics[scale=0.8,width=0.75\linewidth]{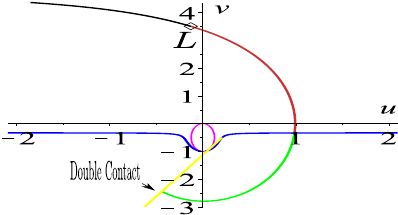}	
		\caption[]{The wave sequence on the plane\ \ $uv$, here\ \ $u_{_{L}}=-0.125$\ \ and\ \ $v_{_{L}}=3.5$. From\ \ $L$\ \ there is a\ \ $\mathcal{H}_{_{1}}$ (black curve) and a\ \ $\mathcal{R}_{_{1}}$ (red curve), since the rarefaction crosses the inflection (blue curve), there is a\ \ $\mathcal{C}_{_{2}}$\ \ to the double contact (yellow straight line)\ \ $z=-1/3$. The magenta ellipse is the elliptic region.\label{figura1ex3-2}} 
	\end{center}
\end{figure}

To construct the 2-reverse wave sequence, here we consider\ \  $(z_{_{R}}=5,t_{_{R}}=3)\in\mathcal{C}_{_{f}}$ (the corresponding values on the plane\ \ $uv$\ \ are\ \ $(u_{_{R}}=9.048076925, v_{_{R}}=14.03846154)$. From\ \ $\mathcal{U}_{_{r}}$ (obtained as described in Algorithm RS) we draw the 2-reverse wave sequence that is given, in this example,  by\ \ $\mathcal{H}_{_{2}}\mathcal{U}_{_{R}} \mathcal{R}_{_{2}}\mathcal{C}_{_{2}}$, as described in Fig. \ref{figura1ex3-3}. In the wave manifold\ \ $\mathcal{W}$,\  $\mathcal{R}_{_{2}}$\ \ is the green curve on\ \ $\mathcal{C}_{_{f}}$. The curve\ \ $H_{_{1}}$\ \ is drawn for\ \ $Y<0$, \emph{i.e.}, below\ \ $\mathcal{C}_{_{f}}$\ \ and\ \  $\mathcal{C}_{_{2}}$\ \ is the black curve behind the magenta surface (saturated of\ \ $\mathcal{H}_{_{1}}$), from\ \ $\mathcal{U}_{_{R}}$\ \ there is only a wave crossing the magenta surface, that is\ \ $\mathcal{R}_{_{2}}$.

In this case, the Riemann solution from state\ \ $\mathcal{U}_{_{L}}$\ \ consists of a\ \ $\mathcal{H}_{_{1}}$\ \ from\ \ $\mathcal{U}_{_{L}}$\ \ to state\ \ $\mathcal{U}_{_{M}}$; from\ \ $\mathcal{U}_{_{M}}$ there is a\ \ $\mathcal{R}_{_{2}}$\ \ to\ \ $\mathcal{U}_{_{R}}$. The solution in\ \ $\mathcal{W}$\ \ is described in Fig. \ref{figura1ex3-3} and in the\ \ $uv$\ \ plane is described in \ref{figura1ex3-4}.

\begin{figure}[htpb]
	\begin{center}	\includegraphics[scale=0.8,width=0.47\linewidth]{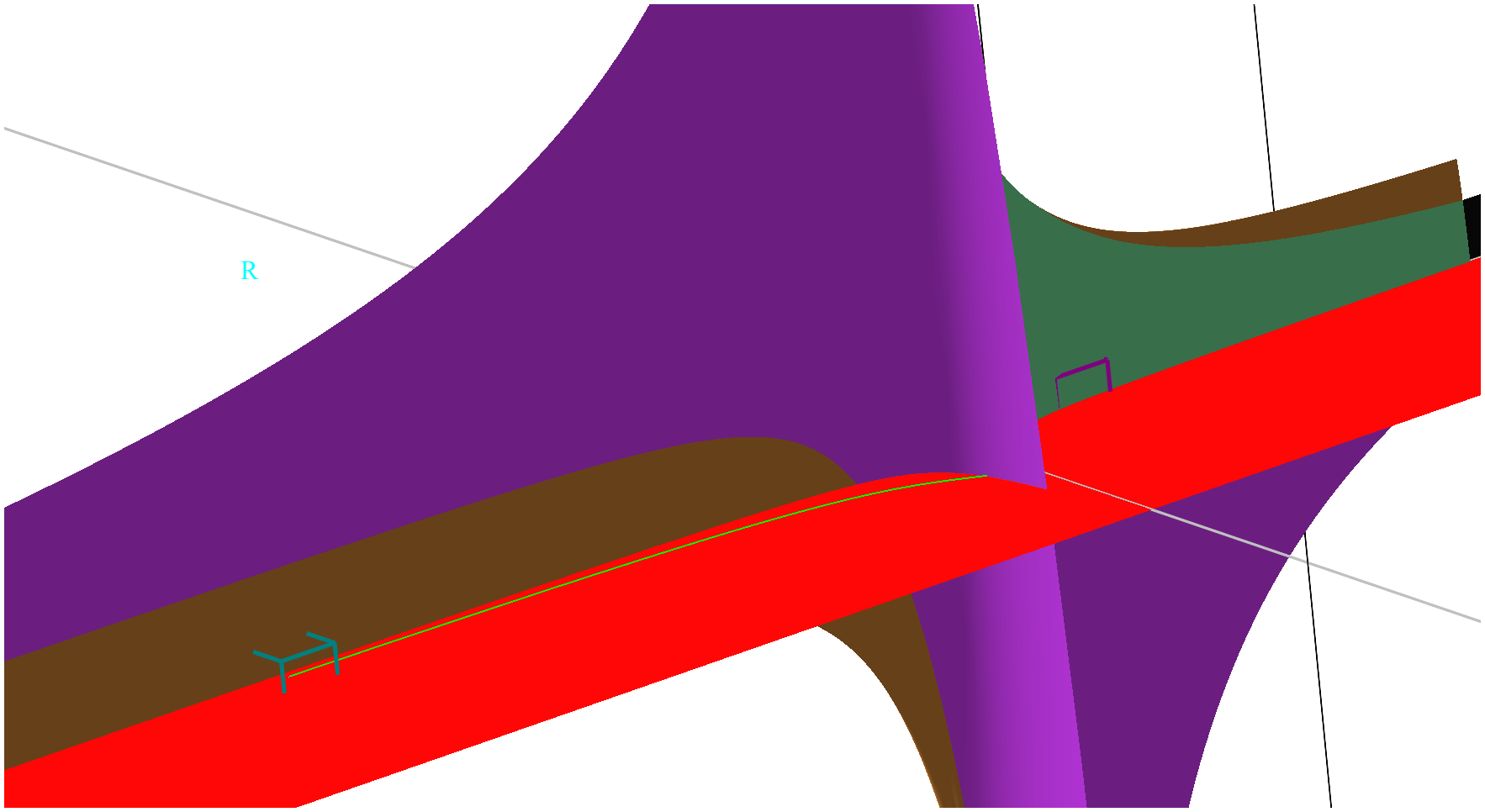}
\includegraphics[scale=0.8,width=0.47\linewidth]{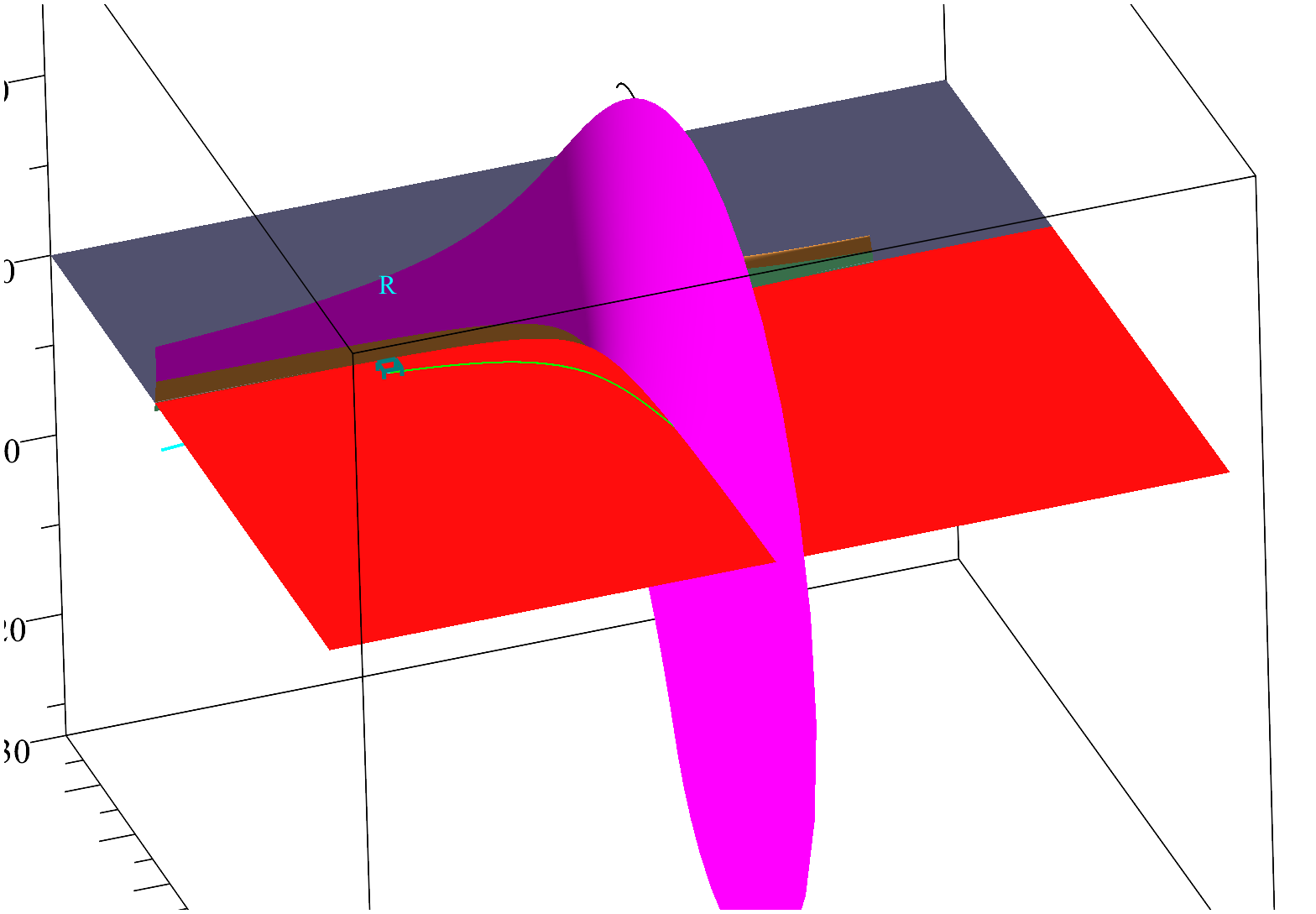}
		\caption[]{The 2-reverse wave sequence for\ \ $(z_{_{R}}=5,t_{_{R}}=3)\in\mathcal{C}_{_{f}}$\ \ is\ \ $\mathcal{H}_{_{2}}\mathcal{U}_{_{R}} \mathcal{R}_{_{2}}\mathcal{C}_{_{2}}$. The\ \ $\mathcal{R}_{_{2}}$\ \ is the green curve on\ \ $\mathcal{C}_{_{f}}$. The curve\ \ $H_{_{1}}$\ \ is drawn for\ \ $Y<0$ (below\ \ $\mathcal{C}_{_{f}}$) and\ \  $\mathcal{C}_{_{2}}$\ \ is the black curve behind the magenta surface. The Riemann solution from state\ \ $\mathcal{U}_{_{L}}$\ \ consists of a\ \ $\mathcal{H}_{_{1}}$\ \ from\ \ $\mathcal{U}_{_{L}}$\ \ to state\ \ $\mathcal{U}_{_{M}}$; from\ \ $\mathcal{U}_{_{M}}$\ \ there is a\ \ $\mathcal{R}_{_{2}}$\ \ to\ \ $\mathcal{U}_{_{R}}$.\label{figura1ex3-3}} 
	\end{center}
\end{figure}

\begin{figure}[htpb]
	\begin{center}
			\includegraphics[scale=0.8,width=0.75\linewidth]{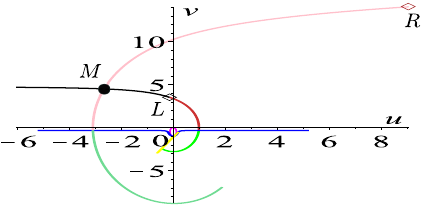}
		\caption[]{The Riemann solution  in plane\ \ $uv$, for\ \ $L=(u_{_{L}}=-0.125,v_{_{L}}=3.5)$\ \ and\ \ $R=(u_{_{R}}=9.048076925, v_{_{R}}=14.03846154)$. From\ \ $L$\ \ there is a\ \ $\mathcal{H}_{_{1}}$\ \  to a state\ \ $M$ (black curve); from\ \ $M$\ \ there is a\ \ $\mathcal{R}_{_{2}}$ (pink curve) to\ \ $R$.The blue curve is the inflection.\label{figura1ex3-4}} 
	\end{center}
\end{figure}

\newpage

\noindent
\textbf{Example 4.}

In this example, we utilize\ \ $z_{_{L}}=1$\ \ and\ \ $t_{_{L}}=-2$ (the corresponding value in the\ \ $uv$\ \ plane is\ \ $u_{_{L}}=0.125, v_{_{L}}=-2.5 $), the state\ \ $(z_{_{L}},t_{_{L}})$\ \ belongs to the region\ \ $III$. The sequence  through\ \  $\mathcal{U}_{_{L}}$\ \ is\ \ 
$\mathcal{C}_{_{1}}\mathcal{R}_{_{1}}\mathcal{U}_{_{L}}\mathcal{H}_{_{1}}$, see Figs. \ref{figura1ex4-1} (in wave manifold\ \ $\mathcal{W}$) and \ref{figura1ex4-2} (in the\ \ $uv$-plane). 

In Fig. \ref{figura1ex4-1}, the magenta surface is the saturated of\ \  $\mathcal{H}_{_{1}}$; the brown surface is the saturated of\ \ $\mathcal{R}_{_{1}}$\ \ and\ \ $\mathcal{R}_{_{2}}$; the green surface is the saturated of\ \ $\mathcal{C}_{_{2}}$.  The curve\ \ $\mathcal{C}_{_{2}}$\ \ is drawn to reach the double contact in\ \ $z=1/3$ (with\ \ $t<0$). In Fig. \ref{figura1ex4-2}, the double contact\ \ $z=1/3$\ \ is the purple straight line.

\begin{figure}[htpb]
	\begin{center}	\includegraphics[scale=0.8,width=0.47\linewidth]{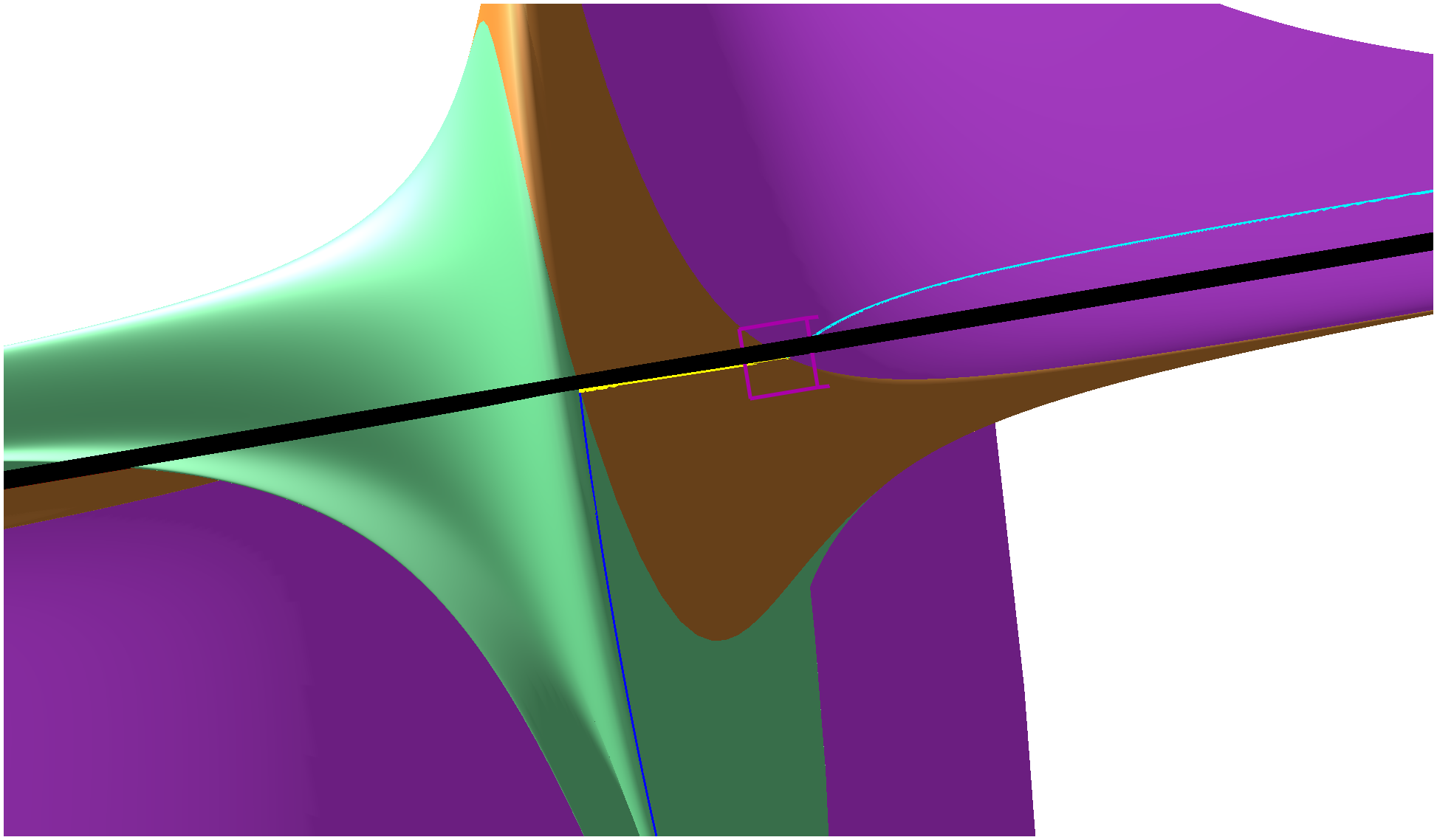}
\includegraphics[scale=0.8,width=0.47\linewidth]{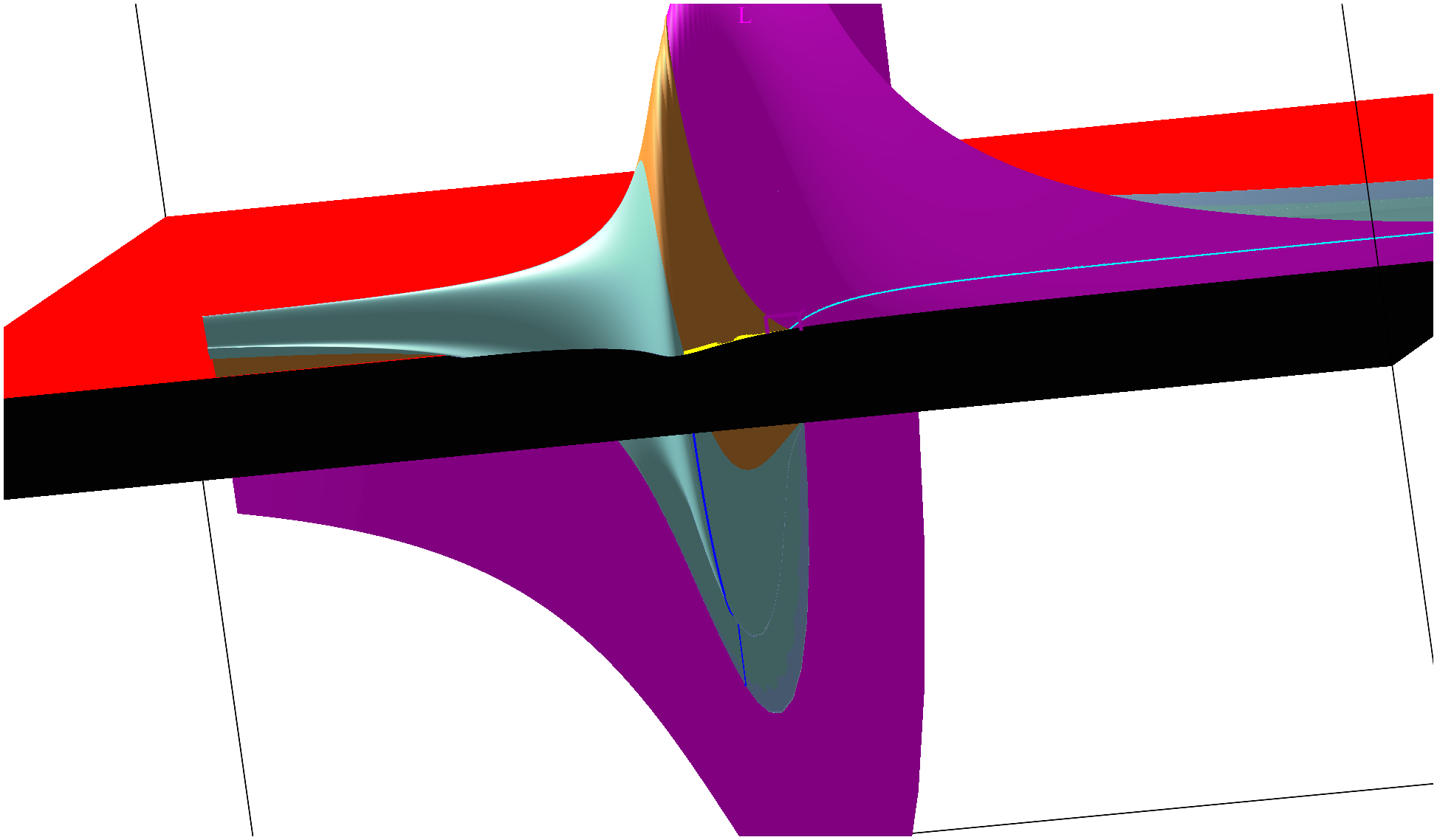}
		\caption[]{Here the values for\ \ $\mathcal{U}_{_{L}}$\ \ in\ \ $III$\ \ with\ \ $z_{_{L}}=1$\ \ and\ \ $t_{_{L}}=-2$. The sequence  through\ \  $\mathcal{U}_{_{L}}$\ \ is 
$\mathcal{C}_{{1}}\mathcal{R}_{_{1}}\mathcal{U}_{_{L}}\mathcal{H}_{_{1}}$. The magenta surface is the 
saturated of\ \  $\mathcal{H}_{_{1}}$ (blue curve); the brown surface is the saturated of\ \ $\mathcal{R}_{_{1}}$ (yellow curve); the green surface is the saturated of\ \ $\mathcal{C}_{_{2}}$ (cyan).  The curve\ \ $\mathcal{C}_{_{2}}$\ \ is drawn to reach the double contact in\ \ $z=1/3$.\label{figura1ex4-1}} 
	\end{center}
\end{figure}

\begin{figure}[htpb]
	\begin{center}
		\includegraphics[scale=0.8,width=0.6\linewidth]{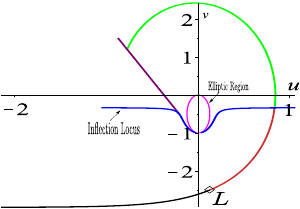}	
		\caption[]{The wave sequence on the plane\ \ $uv$, here\ \ $u_{_{L}}=0.125$\ \ and\ \ $v_{_{L}}=-2.5$. From\ \ $L$\ \ there is a\ \ $\mathcal{H}_{_{1}}$ (black curve) and a\ \ $\mathcal{R}_{_{1}}$ (red curve), since the rarefaction crosses the inflection (blue curve), there is a\ \
		$\mathcal{C}_{_{2}}$\ \ to the double contact (purple straight line)\ \ $z=1/3$. The magenta ellipse is the elliptic region.\label{figura1ex4-2}} 
	\end{center}
\end{figure}

To construct the 2-reverse wave sequence, here we consider\ \  $(z_{_{R}}=4.961249694,t_{_{R}}=1)\in\mathcal{C}_{_{f}}$ (the corresponding values on the plane\ \ $uv$\ \ are\ \ $(u_{_{R}}=3, v_{_{R}}=4)$. From\ \ $\mathcal{U}_{_{r}}$ (obtained as described in Algorithm RS) we draw the 2-reverse wave sequence that is given, in this example,  by\ \ $\mathcal{C}_{_{2}}\mathcal{R}_{_{2}}\mathcal{U}_{_{R}} \mathcal{H}_{_{2}}$, as described in Fig. \ref{figura1ex4-3}. In the wave manifold\ \ $\mathcal{W}$, $\mathcal{R}_{_{2}}$\ \ is the green curve on\ \  $\mathcal{C}_{_{f}}$. The curve\ \ $H_{_{1}}$\ \ is the cyan curve drawn for\ \ $Y>0$\ \ and increasing\ \ $z$. The curve\ \  $\mathcal{C}_{_{2}}$\ \ is the black curve below\ \ $\mathcal{C}_{_{f}}$, from\ \ $\mathcal{U}_{_{R}}$\ \ the curve\ \ $\mathcal{R}_{_{2}}$\ \ crosses the saturated surface of\ \ $\mathcal{C}_{_{1}}$ (green surface).

In this case, the Riemann solution from state\ \ $\mathcal{U}_{_{L}}$\ \ consists of a\ \ $\mathcal{R}_{_{1}}$\ \ followed by a\ \ $\mathcal{C}_{_{1}}$\ \ to a state\ \ $\mathcal{U}_{_{M}}$; from\ \ $\mathcal{U}_{_{M}}$\ \ there is a\ \ $\mathcal{R}_{_{2}}$\ \ to\ \ $\mathcal{U}_{_{R}}$. The solution in\ \ $\mathcal{W}$\ \ is described in Fig. \ref{figura1ex4-3} and in the\ \ $uv$\ \ plane is described in Fig. \ref{figura1ex4-4}.

\begin{figure}[htpb]
	\begin{center}	\includegraphics[scale=0.8,width=0.47\linewidth]{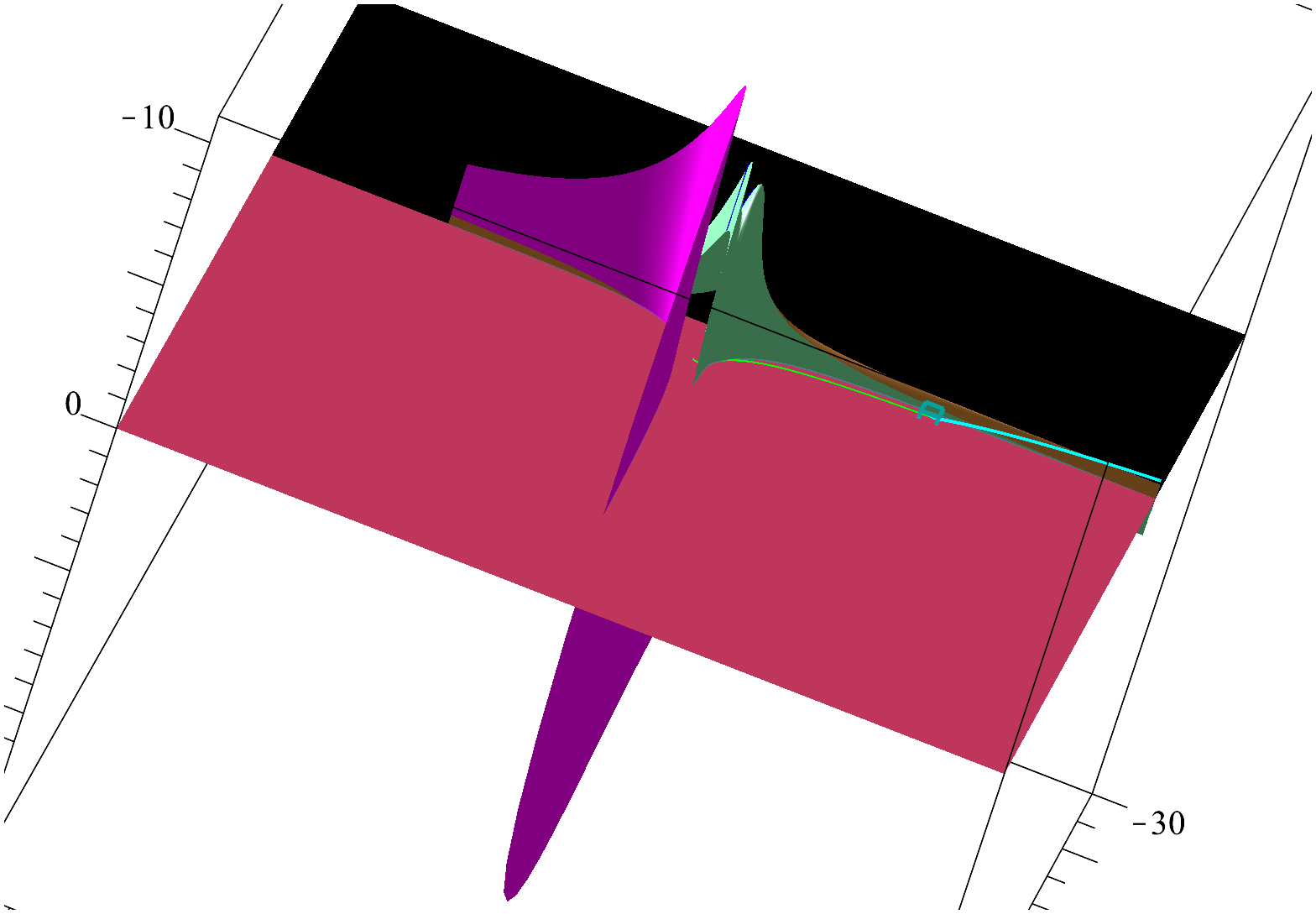}
\includegraphics[scale=0.8,width=0.47\linewidth]{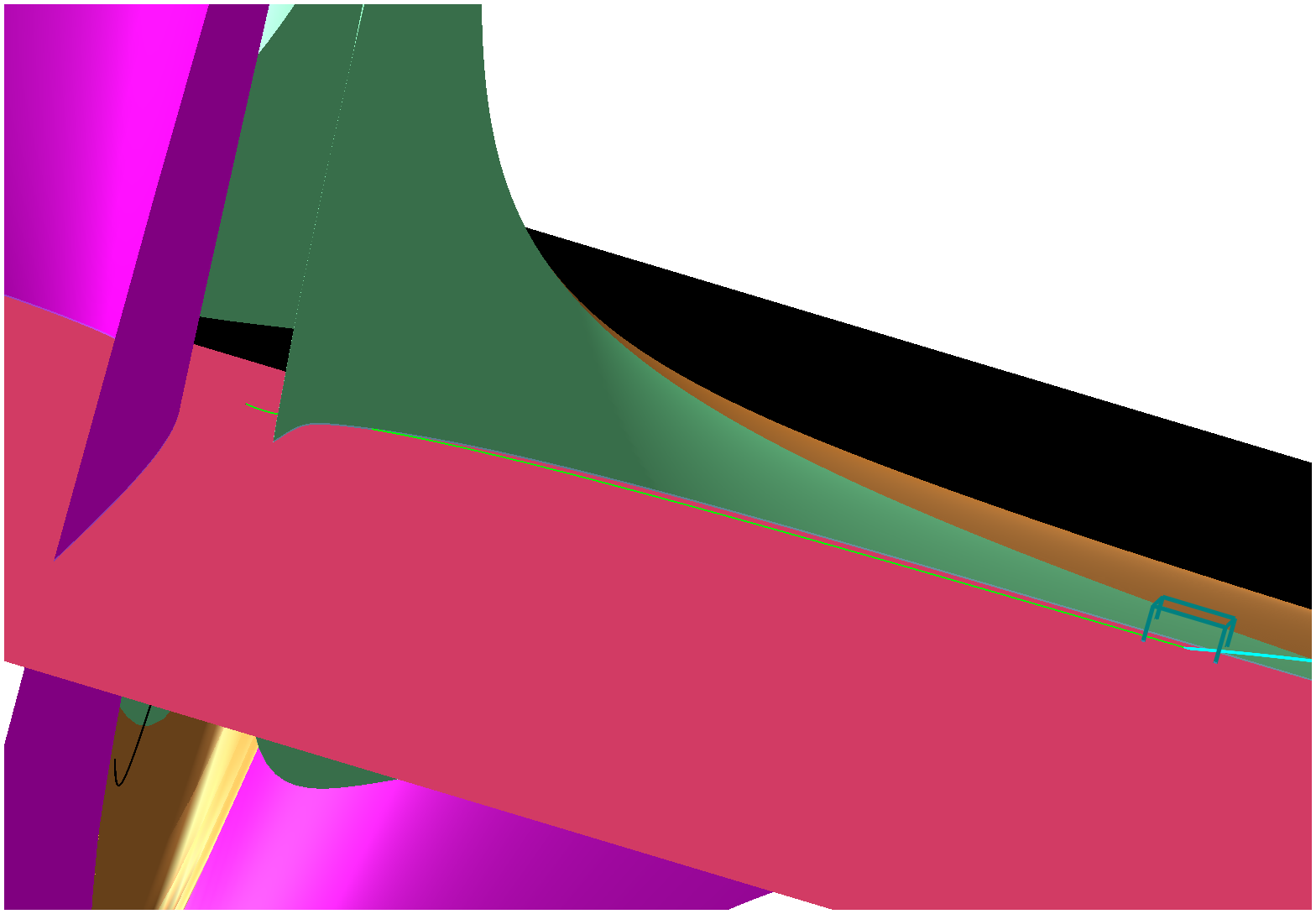}
		\caption[]{The 2-reverse wave sequence for\ \ $(z_{_{R}}=4.961249694,t_{_{R}}=1)\in\mathcal{C}_{_{f}}$\ \  is\ \  $\mathcal{C}_{_{2}}\mathcal{R}_{_{2}}\mathcal{U}_{_{R}} \mathcal{H}_{_{2}}$. Here\ \ $\mathcal{R}_{_{2}}$\ \ is the green curve on\ \  $\mathcal{C}_{_{f}}$. The curve\ \ $H_{_{1}}$\ \ is the cyan curve drawn for\ \ $Y>0$\ \ and increasing\ \ $z$. The curve\ \  $\mathcal{C}_{_{2}}$\ \ is the black curve below\ \ $\mathcal{C}_{_{f}}$, from\ \ $\mathcal{U}_{_{R}}$\ \ the curve\ \ $\mathcal{R}_{_{2}}$\ \ crosses the saturated surface of\ \ $\mathcal{C}_{_{1}}$ (green surface). The Riemann solution from state\ \ $\mathcal{U}_{_{L}}$\ \ consists of a\ \ $\mathcal{R}_{_{1}}$ followed by a\ \ $\mathcal{C}_{_{1}}$\ \ to a state\ \ $\mathcal{U}_{_{M}}$; from\ \ $\mathcal{U}_{_{M}}$\ \ there is  a\ \ $\mathcal{R}_{_{2}}$\ \ to\ \ $\mathcal{U}_{_{R}}$.} \label{figura1ex4-3}
	\end{center}
\end{figure}
\begin{figure}[htpb]
	\begin{center}
			\includegraphics[scale=0.8,width=0.7\linewidth]{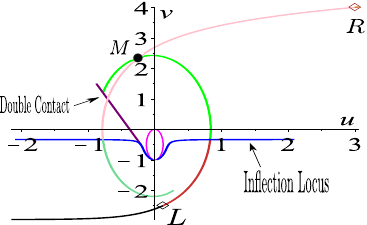}
		\caption[]{The Riemann solution  in plane\ \ $uv$, for\ \ $L=(u_{_{L}}=0.125,v_{_{L}}=-2.5)$\ \ and\ \ $R=(u_{_{R}}=3, v_{_{R}}=4)$. From\ \ $L$\ \ there is a\ \ $\mathcal{R}_{_{1}}$ (red curve) followed by a\ \ $\mathcal{C}_{_{1}}$ (green curve) to state\ \ $M$ (black curve); from\ \ $M$\ \ there is a\ \ $\mathcal{R}_{_{2}}$ (pink curve) to\ \ $R$.The blue curve is the inflection. The purple is the double contact that corresponds to\ \ $z=1/3$. \label{figura1ex4-4}} 
	\end{center}
\end{figure}

\subsection{The Regions in Phase Space}

For the flux function\ \  $f(u,v)$\ \ and\ \ $g(u,v)$\ \ given in equation $(\ref{fgeq})$, we can obtain the eigenpairs (eigenvalues,\ \  $\lambda$,\ \ and eigenvectors,\ \ $\vec{\boldsymbol{r}}$)\ \  in the phase space\ \ $(u,v)$. These eigenpairs are solutions of:
\begin{equation}
\mathbf{A}\vec{\boldsymbol{r}}=\lambda \vec{\boldsymbol{r}}, \quad \text{ where }\  \lambda
\ \text{ is obtained solving }\ \operatorname{det}(\mathbf{A}-\lambda \mathbf{I})=0.\label{eqts}
\end{equation}
Here\ \ $\mathbf{A}$\ \ is the jacobian of\ \ $(f,g)^{^{\mathrm{T}}}$\ \ and\ \ $\mathbf{I}$\ \ is the\ \ $2\times 2$\ \ identity matrix.  Notice that from equation $(\ref{eqts}.b)$ we have a degree two polynomial, thus for each state $(u,v)$ in the phase space we have associated two eigenvalues\ \ $\lambda$. The phase space is classified using\ \ $\lambda$\ \ and\ \ $\vec{\boldsymbol{r}}$. The region in the phase space\ \ $(u,v)$\ \ for which each state admits two real eigenvalues\ \ $\lambda$\ \ and a basis of eigenvectors (local) is called \emph{hyperbolic region}, which we denote as\ \ $\mathcal{R}_{_{H}}$. The region for which we have complex (and conjugated) eigenvalues\ \ $\lambda$\ \ is called \emph{elliptic region}, denoted as\ \ $\mathcal{R}_{_{E}}$. The boundary (if this boundary exits) between\ \ $\mathcal{R}_{_{H}}$\ \ and\ \ $\mathcal{R}_{_{E}}$\ \ is denoted as\ \ $\partial \mathcal{R}_{_{E}}$,\ \ here the two eigenvalues are equals.

For\ \  $f(u,v)$\ \ and\ \ $g(u,v)$\ \ given in equation (\ref{fgeq}) the eigenvalues are:
\begin{align}
&\lambda_{_{1}}=\frac{\alpha_{_{1}}-\sqrt{\alpha_{_{2}}}}{2}\quad \text{ and } \quad\lambda_{_{2}}=\frac{\alpha_{_{1}}+\sqrt{\alpha_{_{2}}}}{2}, \quad \text{ where }\alpha_{_{1}}=u(b_{_{1}}+2)+a_{_{1}}+a_{_{4}}\quad \text{and}\\
&\alpha_{_{2}}=a_{_{1}}^{^{2}}+(2b_{_{1}}u-2a_{_{4}})a_{_{1}}+a_{_{4}}^{^{2}}-2a_{_{4}}b_{_{1}}u+b_{_{1}}^{^{2}}u^{^{2}}+4(v+a_{_{2}})(a_{_{3}}+v)
\end{align}
To obtain\ \ $\mathcal{R}_{_{H}}$,\ \ $\mathcal{R}_{_{E}}$\ \ and\ \ $\partial \mathcal{R}_{_{E}}$,\ \ we need to study the signal of\ \ $\alpha_{_{2}}$.  After some algebraic, we can write\ \  $\alpha_{_{2}}$\ \  and define\ \  $\widetilde{\alpha}_{_{2}}$\ \  as:
\begin{equation}
\alpha_{_{2}}=(2v+a_{_{2}}+a_{_{3}})^{^{2}}+(b_{_{1}}u+a_{_{1}}-a_{_{4}})^{^{2}}-c^{^{2}},\quad 
 \widetilde{\alpha}_{_{2}}=\frac{[v+\frac{a_{_{2}}+a_{_{3}}}{2}]^{^{2}}}{(\frac{c}{2})^{^{2}}}+\frac{[u+\frac{a_{_{1}}-a_{_{4}}}{b_{_{1}}}]^{^{2}}}{(\frac{c}{b_{_{1}}})^{^{2}}}-1.
 \label{alfa2}
\end{equation}
 Here, we use that\ \  $c=a_{_{3}}-a_{_{2}}>0$. From\ \  $\widetilde{\alpha}_{_{2}}$, we can define:
\begin{align}
\mathcal{R}_{_{H}}=\{(u,v)\;/ \; \widetilde{\alpha}_{_{2}}>0\},\;
\mathcal{R}_{_{E}}=\{(u,v)\;/ \; \widetilde{\alpha}_{_{2}}<0\}  \text{ and } \partial \mathcal{R}_{_{E}}=\{(u,v)\; / \; \widetilde{\alpha}_{_{2}}=0\}.
\end{align}
 Notice that\ \ $\partial \mathcal{R}_{_{E}}$\ \ represents an ellipse. In  Fig. \ref{figelipse}.{\it a}, we illustrate\ \ $\mathcal{R}_{_{H}}$,\  $\mathcal{R}_{_{E}}$\ \ and\ \ $\partial \mathcal{R}_{_{E}}$\ \ for  particular values of\ \ $a_{_{1}}$,\  $a_{_{2}}$,\  $a_{_{3}}$,\   $a_{_{4}}$,\  $c$\ \ and\ \ $b_{_{1}}$\ \ described in caption of figure. 

 
 \begin{figure}[h!]
\includegraphics[scale=0.8,width=0.5\linewidth]{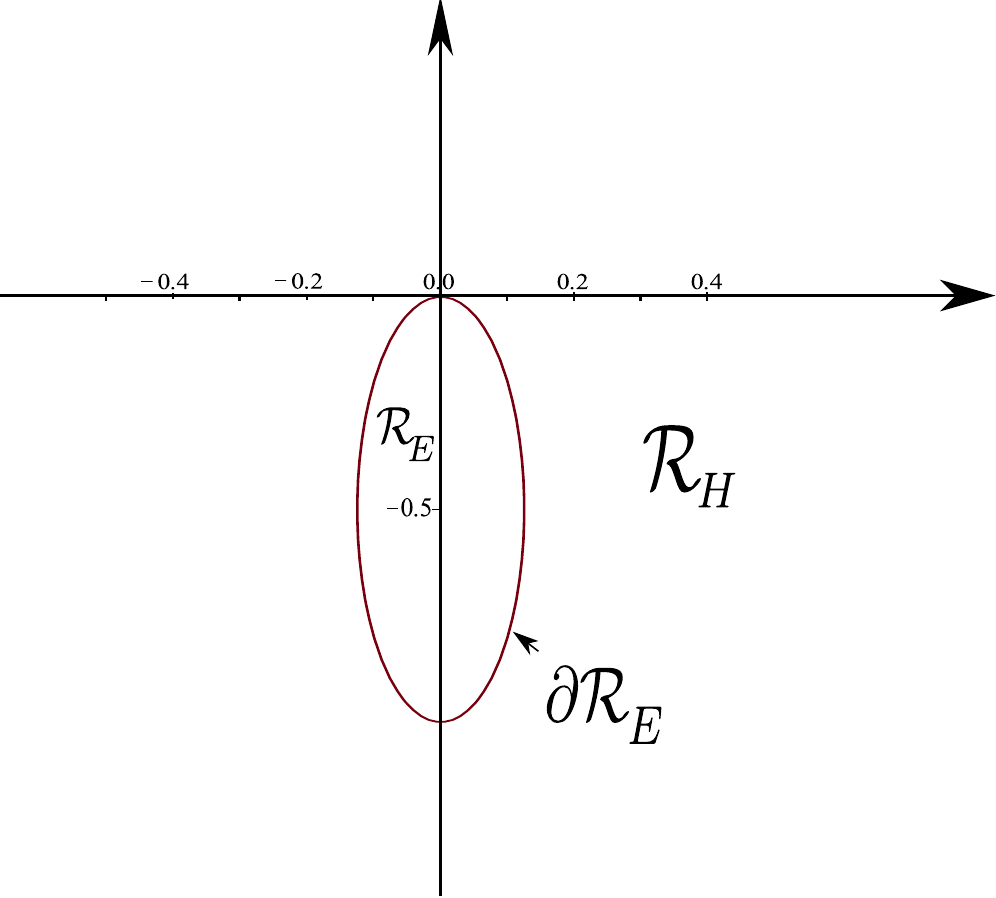}
\includegraphics[scale=0.8,width=0.5\linewidth]{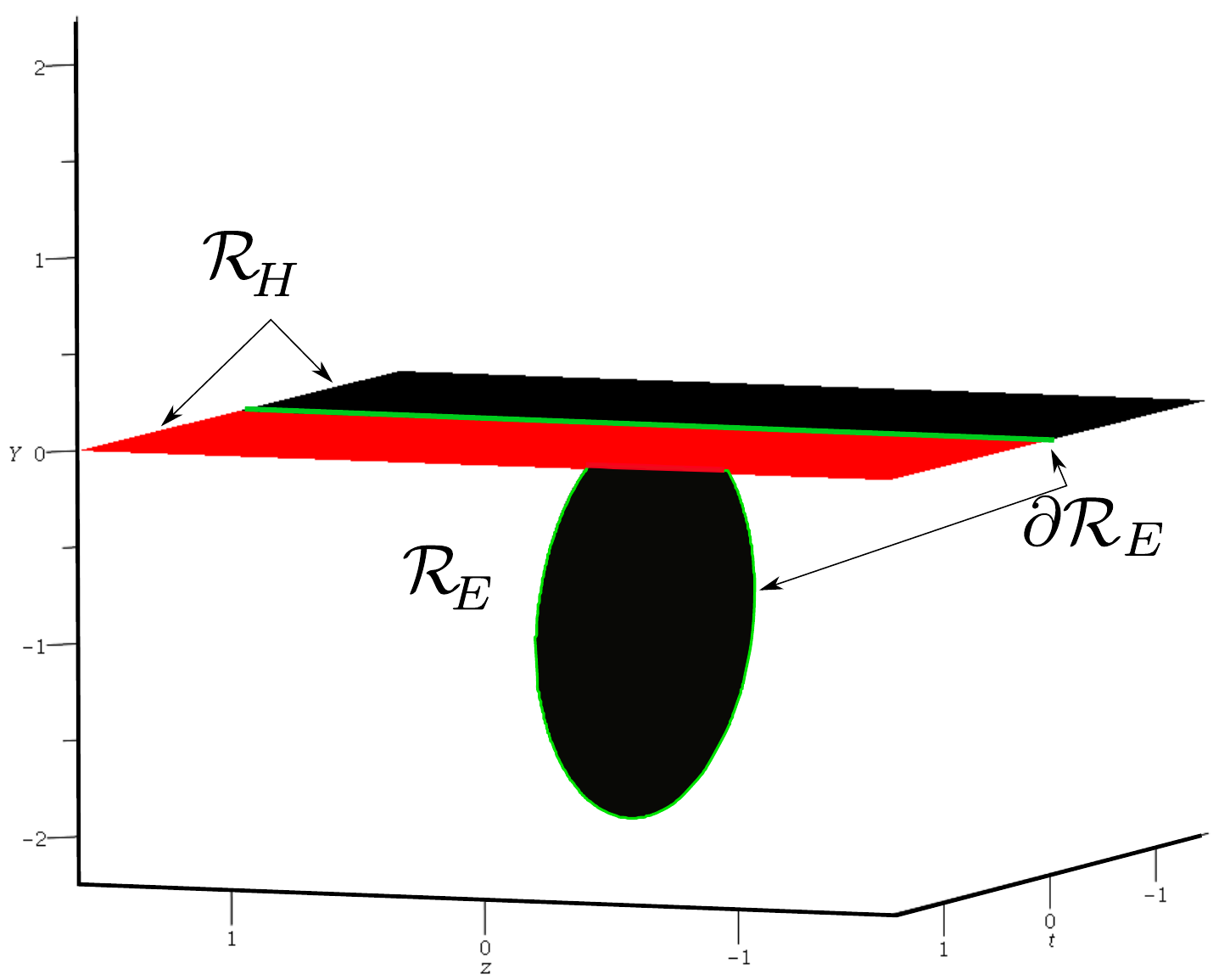}
\caption[]{\textit{Left - a:).} Regions\ \ $\mathcal{R}_{_{E}}$,\ \ $\mathcal{R}_{_{H}}$\ \ and the boundary\ \ $\partial \mathcal{R}_{_{E}}$. Here we use\ \ $a_{_{1}}=0$,\ \ $a_{_{2}}=0$,\ \ $a_{_{3}}=1$,\ \  $a_{_{4}}=0$,\ \ $c=1$,\ \ and\ \ $b_{_{1}}=8$.}
\label{figelipse}
\end{figure}

From the variables change defined Section \ref{sec:wmhc}, we  obtain\ \
$u$,\ \ $v$,\ \  $u^\prime$\ \ and\ \ $v^\prime$\ \ as function of\ \ $Y$,\ \ $t$\ \ and\ \ $z$. The variables\ \ $u$\ \  and\ \ $v$\ \ are:
\begin{align}
&u=\frac{2t(z^{^{4}}-1)+Yb_{_{1}}z^{^{3}}+(-2a_{_{1}}+2a_{_{4}})(z^{^{2}}+1)+(Yb_{_{1}}+4c)z}{2(z^{^{2}}+1)b_{_{1}}},\label{u}\\\\
&v=\frac{2tz^{^{3}}+(Y-2a_{_{3}})z^{^{2}}+2tz+Y-2a_{_{2}}}{2z^{^{2}}+2}\label{v}
\end{align}
to obtain\ \ $u^\prime$\ \ and\ \ $v^\prime$\ \ we use\ \ $u$\ \ and\ \ $v$\ \ interchanging\ \ $Y$\ \ with\ \ $-Y$.

Substituting in\ \ $\alpha_{_{2}}$,\ \  $u$\ \ and\ \ $v$\ \  given equations\ \ (\ref{u}),\ \ (\ref{v}),\ \  we obtain an expression that we denote as\ \ $\alpha_{_{M}}(Y,t,z)$:  
\begin{eqnarray}
\alpha_{_{M}}(Y,t,z)&=&\frac{(Y^{^{2}}b_{_{1}}^{^{2}}+12t^{^{2}})z^{^{4}}+16Ytz^{^{3}}+[(b_{_{1}}^{^{2}}+4)Y^{^{2}}+8c(b_{_{1}}-1)Y+4c^{^{2}}+12t^{^{2}}-4c]z^{^{2}}}{4z^{^{2}}+4}\nonumber\\\nonumber\\
&&+\frac{4t^{^{2}}z^{^{6}}+4Yb_{_{1}}tz^{^{5}}-4t(b_{_{1}}-4)Yz+4Y^{^{2}}+8Yc+4c^{^{2}}+4t^{^{2}}-4c}{4z^{^{2}}+4}.
\end{eqnarray}
If we set\ \  $\alpha_{_{M}}=0$, we obtain the extension of\ \ $\mathcal{R}_{_{E}}$\ \ in\ \ $\mathcal{W}$,\ \ which we will call\ \ \emph{coincidence surface},\ \ \emph{i.e.}, 
\begin{equation}
coincidence=\{(z,t,Y)\;/\;\alpha_{_{M}}=0\}.
\end{equation}
Similarly, we define  \emph{coincidence$'$} that is obtained substituting\ \ $u^\prime$\ \ and\ \ $v^\prime$\ \ in\ \ $\alpha_{_{2}}$. This expression is denoted as\ \ $\alpha_{_{M}}^\prime=\alpha_{_{M}}(z,t,-Y)$,\ \ thus\ \ $coincidence^\prime=\{(z,t,Y)\;/\;\alpha_{_{M}}^\prime=0\}$. In Fig. \ref{suptan}, we show \emph{coincidence surface} and the characteristic\ \ $\mathcal{C}$. The\ \ \emph{ coincidence$'$ surface}\ \ is the symmetric projection of the coincidence surface in\ \ $\mathcal{C}$.  

 \begin{figure}[h!]
 \centering
\includegraphics[scale=0.28]{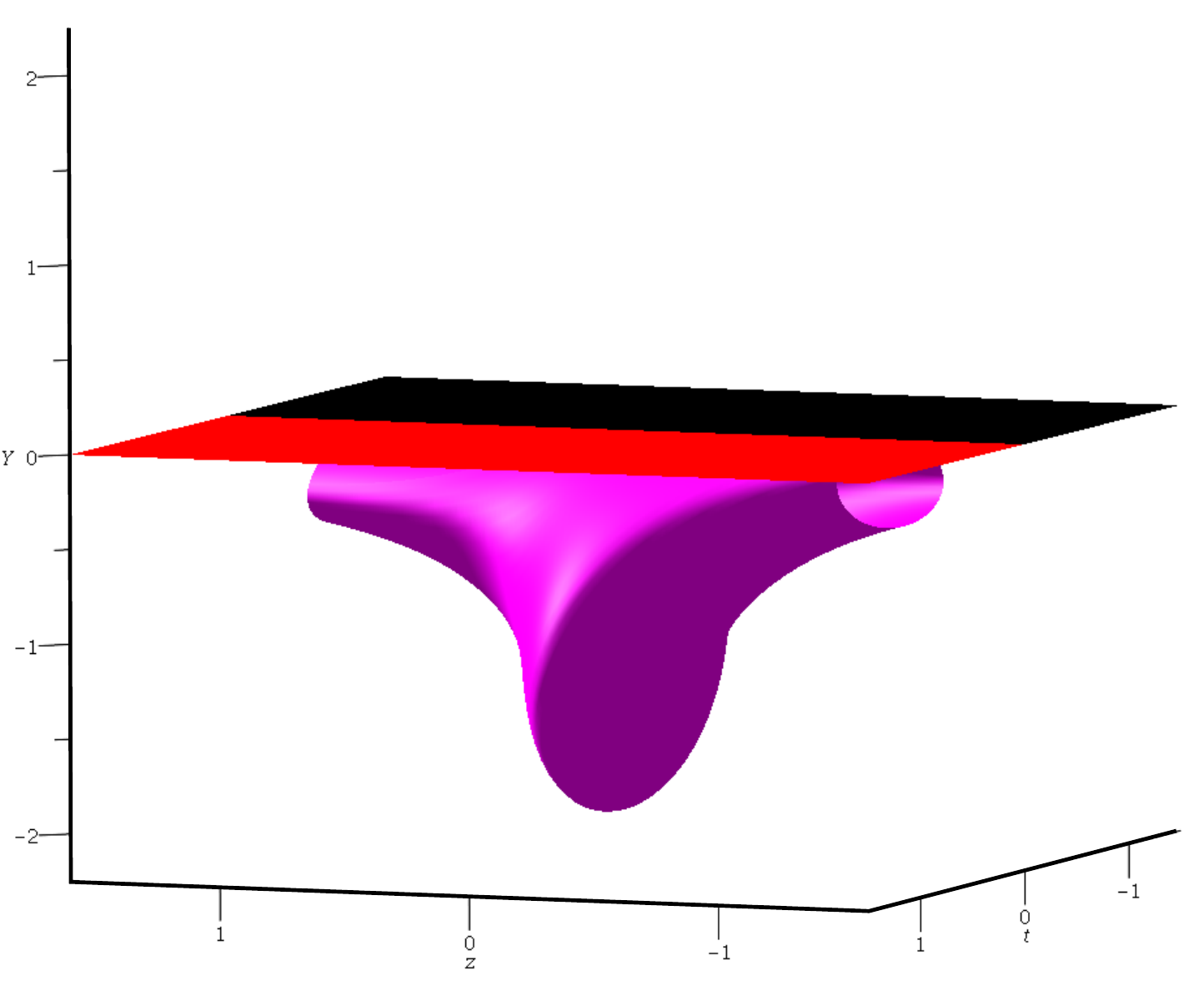}
\caption[]{\textit{Left - a:).} Regions\ \ $\mathcal{R}_{_{E}}$,\ \ $\mathcal{R}_{_{H}}$\ \ and the boundary\ \ $\partial \mathcal{R}_{_{E}}$. Here, we use\ \ $a_{_{1}}=0$,\ \ $a_{_{2}}=0$,\ \ $a_{_{3}}=1$,\ \  $a_{_{4}}=0$,\ \ $c=1$,\ \ and\ \ $b_{_{1}}=8$.}
\label{suptan}
\end{figure}
\newpage 
\section*{Appendix\label{sec:Appendices}}

\setcounter{section}{0}

\setcounter{equation}{0}
\section{Characterizing the L2 Condition \label{app:L2}}
In this appendix section, we will prove that the necessary and sufficient condition to verify the condition \textbf{L2}  is that\ \ $z^{^{2}}<1/(b_{_{1}}+1).$

As in Section \ref{sec:Yzt-subsection}, parametric equations for the Hugoniot$'$ curve through a point\ \ $(t_{_{0}},z_{_{0}},Y_{_{0}})$\ \ is given by 
\begin{equation}
\left\{
\begin{array}{l}
Y=\displaystyle \frac{A'z^{^{2}}+B'z +C'}{(z_{_{0}}^{^{2}}+1)[(b_{_{1}}-1)z^{^{2}}+1]}\\\\
t=\displaystyle \frac{D'z^{^{3}}+E'z^{^{2}}+F'z+G'}{c(z_{_{0}}^{^{2}}+1)[(b_{_{1}}-1)z^{^{4}}+b_{_{1}}z^{^{2}}+1]},
\end{array}
\right.
\label{eq:hugpontoli}
\end{equation}

\noindent where\ \ $A'$,\ \ $B'$,\ \ $C'$,\ \ $D'$,\ \ $E'$,\ \ $F'$,\ \ $G'$\ \ are obtained from equations \eqref{eq:hugponto} by changing\ \ $Y$\ \ into\ \ $-Y$\ \ and\ \ $Y_{_{0}}$\ \ into\ \ $-Y_{_{0}}$.

Parametric equations for Hugoniot$'$ curve through a point of $Son'$ are obtained from the equations \eqref{eq:hugpontoli} substituting\ \ $t_{_{0}}$\ \ by\ \ $t'_{_{0}}$,\ \ $t'_{_{0}}$\ \ as in Section \ref{sec:sonlifs}, getting\ \ $(Y_{_{hug'}}(z),t_{_{hug'}}(z))$.



We need to calculate the speed\ \ $s$\ \ at the intersections points of the Hugoniot$'$ curve with\ \ $\mathcal{C}$\ \ and then write the condition \textbf{L2}. As the inequalities involved change only in\ \ $Son$,\ \ it is enough to verify them at a point of\ \ $Son'$.
The  speed\ \ $s$\ \ at the point\ \ $(t'_{_{0}},z_{_{0}},Y_{_{0}})$\ \ is given by the equation \eqref{eq:speedson'}.


Let\ \ $z_{_{1}}$\ \ and\ \ $z_{_{2}}$\ \ be,\ \ $z_{_{1}}< z_{_{2}}$,\ \ the solutions of  the equation $Y_{_{hug'}}=0$. The roots\ \ $z_{_{1}}$\ \ and\ \ $z_{_{2}}$\ \ are the\ \ $z$\ \ coordinates of the intersection points of the Hugoniot$'$ curve with\ \ $\mathcal{C}$. The speed,\ \ $s_{_{hug'}}(z)$,\ \ along the Hugoniot$'$ curve through\ \ $(t'_{_{0}},z_{_{0}},Y_{_{0}})$\ \ is obtained by changing\ \ $t$\ \ into\ \ $t_{_{hug'}}(z)$\ \ in equation \eqref{eq:speed}. We must compare the values\ \ $s_{_{hug'}}(z_{_{1}})$\ \ and\ \ $s_{_{hug'}}(z_{_{2}})$\ \ with\ \ $s_{_{son'}}$. Denoting by\ \  $s_{_{hug'n}}(z)$\ \  the numerator of\ \ $s_{_{hug'}}(z)$,\ \ $s_{_{hug'd}}(z)$\ \  the denominator of\ \ $s_{_{hug'}}(z)$\ \  and\ \ $Y_{_{hug'n}}(z)$\ \  the numerator of\ \  $Y_{_{hug'}}(z)$,\ \ we can write

\begin{equation*}
s_{_{hug'n}}(z)=p_{_{1}}(z)Y_{_{hug'n}}(z)+r_{_{1}}(z)
\end{equation*}
and 
\begin{equation*} 
s_{_{hug'd}}(z)=p_{_{2}}(z)Y_{_{hug'n}}(z)+r_{_{2}}(z).
\end{equation*}
It follows that,\ \ $s_{_{hug'\mathcal{C}}}(z)$,\ \ given by
$$s_{_{hug'\mathcal{C}}}(z)= \frac{r_{_{1}}(z)}{r_{_{2}}(z)},$$
has the same value of\ \ $s_{_{hug'}}(z)$\ \ at the intersection points of the Hugoniot$'$ curve through\ \ $(t'_{_{0}},z_{_{0}},Y_{_{0}})$\ \ with\ \ $\mathcal{C}$.\ \ The expression of\ \ $s_{_{hug'\mathcal{C}}}(z)$\ \ is of the form\ \ $(az+b)/(cz+d)$. Our goal is to get a condition such that\ \ $s_{_{son'}}(t'_{_{0}},z_{_{0}}.Y_{_{0}})$\ \  satisfies\ \ $s_{_{hug'\mathcal{C}}}(z_{_{1}})< s_{_{son'}}(t'_{_{0}},z_{_{0}}.Y_{_{0}})<s_{_{hug'\mathcal{C}}}(z_{_{2}})$.

In a more general way, we have the following problem: given a number\ \ $s$\ \ we want a condition such that 
$$\frac{az_{_{1}}+b}{cz_{_{1}}+d}<s<\frac{az_{_{2}}+b}{cz_{_{2}}+d},$$ 
where\ \ $z_{_{1}}$\ \ and\ \ $z_{_{2}}$\ \ are the roots of the polynomial\ \ $fz^{^{2}}+gz+h$. Straightforward computations give that the condition is
$$\frac{(c^{^{2}}h-dcg+d^{^{2}}f)s^{^{2}}+[agd+bcg-2(ach+bdf)]s+a^{^{2}}h-abg+b^{^{2}}f}{c^{^{2}}h-dcg+d^{^{2}}f}<0.$$

Changing\ \ $a$,\ \ $b$\ \ into the coefficients of the numerator of\ \ $s_{_{hug'\mathcal{C}}}(z)$,\ \ $c$\ \ and\ \  $d$ into the coefficients of the denominator of\ \ $s_{_{hug'\mathcal{C}}}(z)$\ \ and\ \ $f$,\ \ $g$,\ \ $h$\ \ into the coefficients of the numerator of\ \  $Y_{_{hug'}}(z)$,\ \ we get that the condition is\ \ $$cond= \frac{Y_{_{0}}^{^{2}}((b_{_{1}}+1)z_{_{0}}^{^{2}}-1}{2}<0,$$
it follows that the condition \textbf{L2} is satisfied if and only if $-1 / \sqrt{b_{_{1}}+1}<z< 1 / \sqrt{b_{_{1}}+1}.$

\section{Lax's inequalities in the wave manifold}
\label{laxinequalities}
The Lax's inequalities for shocks are used to select shock (entropy condition) and used to prove uniqueness in the solution. 

For a\ \ $2\time2$\ \ system of equations (\ref{eq:cons-law}), we have a pair of inequalities associated to a 1-shock and another pair of inequalities associated to a 2-shock.

In the Lax's theory, the system is strictly hyperbolic with two different eigenvalues\ \ $\lambda_{_{1}}(W)<\lambda_{_{2}}(W)$. For regions for which the solution is continuous, we can define the $i$-characteristic on the plane\ \ $xt$, for\ \ $i=1,2$,  solving:
\begin{equation}
	\frac{dx}{dt}=\lambda_{_{i}}(W).
\end{equation}

For a shock between a state\ \ $W^-=(u^-,v^-)$\ \ to\ \ $W^+=(u^+,v^+)$, with shock speed\ \ $s(W^-,W^+)$, the 1-shock satisfies the inequalities given by:
\begin{equation}
	s(W^-,W^+)<\lambda_{_{1}}(W^-)\quad\text{ and }\quad \lambda_{_{1}}(W^+)<s(W^-,W^+)<\lambda_{_{2}}(W^+).\label{lax1new}
\end{equation}
The condition (\ref{lax1new}) states that the 1-characteristic wave, on the plane\ \ $xt$, (of each side of the shock) enter in the shock wave (with slope\ \ $s$).

The 2-shock satisfies the inequalities given by:
\begin{equation}
	s(W^-,W^+)>\lambda_{_{2}}(W^+)\quad \text{ and }\quad \lambda_{_{1}}(W^-)<s(W^-,W^+)<\lambda_{_{2}}(W^-).\label{lax2new}
\end{equation}
The condition (\ref{lax2new}) states that the 2-characteristic wave, on the plane\ \ $xt$, (of each side of the shock) enter in the shock wave (with slope\ \ $s$).

Here, we describe the Lax inequalities in the wave manifold.

For a state\ \ $\mathcal{U}\in\mathcal{W}$, using the Hugoniot$'$, we obtain the projections\ \ $\mathcal{U}'_{_{s}}\in\mathcal{C}_{_{s}}$\ \ and\ \ $\mathcal{U}'_{_{f}}\in\mathcal{C}_{_{f}}$, using Eq. (\ref{usuli}). So, we have that the eigenvalue associated to\ \ $\lambda_{_{1}}$\ \  in the wave manifold is $\lambda_{_{s}}$ and the eigenvalue associated to $\lambda_{_{2}}$ in the wave manifold is $\lambda_{_{f}}$. We consider another\ \ $\widetilde{\mathcal{U}}\in sh(\mathcal{U})$, whose projections in\ \ $\mathcal{C}_{_{s}}$\ \ and\ \ $\mathcal{C}_{_{f}}$\ \ are, respectivelly, $\widetilde{\mathcal{U}}'_{_{s}}$\ \ and\ \ $\widetilde{\mathcal{U}}'_{_{f}}$.    

Using the previous analysis, the 1-shock in the wave manifold between the states\ \ $\mathcal{U}\in\mathcal{W}$\ \ and\ \  $\widetilde{\mathcal{U}}\in sh(\mathcal{U})$\ \ becomes:
\begin{equation}
	s(\widetilde{\mathcal{U}})<=s(\mathcal{U}_{_{s}}')=\lambda_{_{s}}(\mathcal{U}'_{_{s}})\quad\text{ and }\quad \lambda_{_{s}}(\widetilde{\mathcal{U}}')=
	s(\widetilde{\mathcal{U}}'_{_{s}})<s(\widetilde{\mathcal{U}})<
	s(\widetilde{\mathcal{U}}'_{_{f}})=\lambda_{_{f}}(\widetilde{\mathcal{U}}_{_{f}}').\label{lax1neww}
\end{equation}
Here, we use that the shock speed\ \ $s$\ \ equals to the\ \ $\lambda$\ \ in\ \ $\mathcal{C}$.

The 2-shock in the wave manifold between the states\ \ $\mathcal{U}\in\mathcal{W}$\ \ and\ \  $\widetilde{\mathcal{U}}\in sh(\mathcal{U})$\ \ becomes:
\begin{equation}
	s(\widetilde{\mathcal{U}})<s(\mathcal{U}_{_{f}}')=\lambda_{_{f}}(\widetilde{\mathcal{U}}'_{_{f}})\quad\text{ and }\quad \lambda_{_{s}}({\mathcal{U}}')=
	s({\mathcal{U}}'_{_{s}})<s(\widetilde{\mathcal{U}})<
	s({\mathcal{U}}'_{_{f}})=\lambda_{_{f}}({\mathcal{U}}_{_{f}}').\label{lax2neww}
\end{equation}

 \section{Choices\label{app:cho}}
 
 \subsection{Choice 1 \label{app:cho1}}
 
 This paper, as many previous ones, studies systems of  two conservation laws \ref{eq:cons-law}, \emph{ i.e.}, system
 $$W_{_{t}}+F(W)_{_{x}}=0,$$
 with\ \ $F=(f,g)$\ \ where\ \ $f(u,v)=v^{^{2}}/2+(b_{_{1}}+1)u^{^{2}}/2+a_{_{1}}u+a_{_{2}}v $\ \ and\ \ $g(u,v)=uv+a_{_{3}}u+a_{_{4}}v$. Actually, here we are considering the so called symmetric case($b_{_{2}}=0$).
 
 Why these particular\ \ $f$\ \ and\ \ $g$?
 
 In \cite{Schaeffer87}, the above equation was considered with the hypothesis that\ \ $DF$\ \ was hyperbolic (distinct eigenvalues) everywhere except at\ \ $(0,0)$\ \ where\ \ $DF(0,0)=I_{_{2\times 2}}$, \emph{i.e.},\ \ $(0,0)$\ \ is an umbilic point. This led to the study of\ \ $F$\ \ quadratic and it was shown that it is sufficient to consider\ \ $F= grad(C)$\ \ with\ \ 
 $C= au^{^{3}}/3+buv+uv^{^{2}}$.
 
 In \cite{Palmeira88}, it was started the study of\ \ $F=grad(C)\ +$ linear terms, in such a way that\ \ $F$\ \ is no longer a gradient.The addition of linear terms replaced the umbilic point by an elliptic region. Before adding linear terms, a simple change of coordinates was done and\ \ $C$\ \ became\ \ $uv^{^{2}}/2+[(b_{_{1}}+1)u^{^{3}}]/6-b_{_{2}}v^{^{3}}/6$. The reason for this change of coordinates was to simplify the differential equation of the eigenspaces of\ \ $DF$. Subsequent papers kept this choice of\ \ $F$.
 
\subsection{Choice 2 \label{app:cho2}}

Where did the coordinates\ \ $t$\ \ and\ \ $z$\ \ came from?

In \cite{Marchesin94b} were introduced coordinates\ \  $\widetilde U$,\  $\widetilde V$,\  $X$,\  $Y$,\  $Z$. In these coordinates, the wave manifold was given by the pair of equations\ \ $(1-Z^{^{2}})\widetilde V-Z\widetilde U+c=0$; $Y=ZX$. Both natural choices($\widetilde U$,\  $Z$,\  $X$\  and\  $\widetilde V$,\  $Z$,\  $X$) of coordinates for the wave manifold were used.

In this paper, we are treating case IV in the classification of \cite{Schaeffer87}. In this case the secondary bifurcation is contained in plane\ \ $Z=0$, in order to stay away from it we use\ \ $z=\frac{1}{Z}$. For technical reasons, we replace\ \  $\widetilde V$\ \ by\ \ $V_{_{1}}= \widetilde V +a_{_{2}}$. So, the equations of the wave manifold become\ \ $(z^{^{2}}-1)V_{_{1}}-z\widetilde U+c=0$;\ $X=zY$.

We would like to use coordinates which would be valid for the whole wave manifold, except the plane\ \ $z=\infty$\ \ which is formed by Hugoniot curve of points in the secondary bifurcation. 

We begin writing the equation of characteristic surface\ \ $\mathcal{C}$,\ \ $(z^{^{2}}-1)V_{_{1}}-z\widetilde U+c=0$, in\ \ $(\widetilde U$,\  $V_{_{1}}$, $z)$-space and add the coordinate\ \ $Y$. 

We recall that\ \ $\mathcal{C}$\ \ is a ruled surface, \emph{i.e.}, for fixed\ \ $z$\ \ we have a horizontal line in\ \ $(\widetilde U, V_{_{1}},z)$-space. In the\ \ $(\widetilde U, V_{_{1}})$-plane, the line is in the direction of vector\ \ $(z^{^{2}}-1,z)$. We also know that coincidence curve is the singular set of the projection\ \ $(\widetilde U, V_{_{1}},z) \longmapsto (\widetilde U, V_{_{1}},0)$ restricted to\ \ $\mathcal{C}$. Putting\ \ $h(\widetilde U, V_{_{1}},z)= (z^{^{2}}-1)V_{_{1}}-z\widetilde U+c$, a parametrization of the coincidence curve is obtained solving the linear system 
\begin{equation}
	\left\{
	\begin{array}{l}
	h(\widetilde U, V_{_{1}},z)=0	\\\\
		\displaystyle \frac{dh(\widetilde U, V_{_{1}},z)}{dz}=0,
	\end{array}
	\right.
\end{equation}
as a system in\ \ $\widetilde U$,\  $V_{_{1}}$, obtaining\ \ $\widetilde U=(2cz)/(z^{^{2}}+1)$\ \ and\ \ $V_{_{1}}=c/(z^{^{2}}+1)$. In order to introduce the coordinate\ \ $t$, we take the rules of surface\ \ $\mathcal{C}$\ \ starting at the coincidence curve. For fixed\ \ $z$\ \ the coordinate\ \ $t$\ \ measures how much the point moves away from the coincidence curve in the direction of the rules, \emph{ i.e.}, $\widetilde U = (2cz)/(z^{^{2}}+1) +t(z^{^{2}}-1)$\ \ and\ \ $V_{_{1}}=c/(z^{^{2}}+1) +tz$.
 
\subsection*{Acknowledgements}
The authors are grateful to Bradley Plohr for many enlightening discussions, mainly concerning the use of lax conditions and the use of the ELI software to check our results.
 
\bibliographystyle{amsplain} 
\def\Canic{\v{C}ani\'c}\def\Canic{\v{C}ani\'c}\def\Zoladek{\.Zo\l{}adek}\def\Freistuhler{Freist\"uhler}\def\Freistuhler{Freist\"uhler}\def\Freistuhler{Freist\"uhler}\def\Freistuhler{Freist\"uhler}\def\Freistuhler{Freist\"uhler}\def\Zoladek{\.Zo\l{}adek}
\providecommand{\bysame}{\leavevmode\hbox to3em{\hrulefill}\thinspace}
\providecommand{\MR}{\relax\ifhmode\unskip\space\fi MR }
\providecommand{\MRhref}[2]{%
  \href{http://www.ams.org/mathscinet-getitem?mr=#1}{#2}
}
\providecommand{\href}[2]{#2}

\end{document}